\documentclass[preprint,11pt]{elsarticle}

\usepackage{amsmath,amssymb,amsfonts,amsthm,mathtools}
\usepackage[shortlabels]{enumitem}
\usepackage{booktabs,longtable,array}
\usepackage{xcolor}
\usepackage{hyperref}

\hypersetup{
  colorlinks=true,
  linkcolor=blue!55!black,
  citecolor=blue!55!black,
  urlcolor=blue!65!black
}
\usepackage[capitalise,noabbrev]{cleveref}

\allowdisplaybreaks
\theoremstyle{plain}
\newtheorem{theorem}{Theorem}[section]
\newtheorem{proposition}[theorem]{Proposition}
\newtheorem{lemma}[theorem]{Lemma}
\newtheorem{corollary}[theorem]{Corollary}

\theoremstyle{definition}

\newtheorem{example}[theorem]{Example}

\theoremstyle{remark}
\newtheorem{remark}[theorem]{Remark}

\newcommand{\A}{\mathbb A}
\newcommand{\cA}{\mathcal A}
\newcommand{\Pone}{\mathbb P^1}
\newcommand{\PP}{\mathbb P}
\newcommand{\Q}{\mathbb Q}
\newcommand{\Z}{\mathbb Z}
\newcommand{\cM}{\mathcal M}
\newcommand{\cR}{\mathcal R}
\newcommand{\cS}{\mathcal S}

\newcommand{\cI}{\mathcal I}
\newcommand{\cJ}{\mathcal J}

\newcommand{\cB}{\mathcal B}
\newcommand{\cG}{\mathcal G}
\newcommand{\cH}{\mathcal H}
\newcommand{\cN}{\mathcal N}
\newcommand{\cT}{\mathcal T}
\newcommand{\bbG}{\mathbb G}
\newcommand{\N}{\mathbb N}
\newcommand{\F}{\mathbb F}
\newcommand{\bone}{\mathbf 1}
\newcommand{\Gor}{\mathrm{Gor}}

\DeclareMathOperator{\PGL}{PGL}
\DeclareMathOperator{\Proj}{Proj}
\DeclareMathOperator{\Rat}{Rat}
\DeclareMathOperator{\Res}{Res}
\DeclareMathOperator{\SL}{SL}
\DeclareMathOperator{\Spec}{Spec}
\DeclareMathOperator{\Sym}{Sym}
\DeclareMathOperator{\ord}{ord}
\DeclareMathOperator{\rank}{rank}
\DeclareMathOperator{\Frac}{Frac}
\DeclareMathOperator{\Hilb}{Hilb}
\DeclareMathOperator{\bideg}{bideg}
\DeclareMathOperator{\mult}{mult}
\DeclareMathOperator{\disc}{disc}
\DeclareMathOperator{\Cov}{Cov}
\DeclareMathOperator{\trdeg}{trdeg}
\DeclareMathOperator{\Span}{span}
\DeclareMathOperator{\Sub}{Sub}
\DeclareMathOperator{\tr}{tr}
\DeclareMathOperator{\diag}{diag}
\DeclareMathOperator{\Stab}{Stab}

\crefname{theorem}{Thm.}{theorems}
\crefname{proposition}{Prop.}{propositions}
\crefname{lemma}{Lem.}{lemmas}
\crefname{corollary}{Cor.}{corollaries}
\crefname{conjecture}{Conj.}{conjectures}
\crefname{definition}{Def.}{definitions}
\crefname{example}{Exa.}{examples}
\crefname{problem}{Prob.}{problems}
\crefname{remark}{Rem.}{remarks}
\crefname{equation}{Eq.}{Eqs.}

\crefformat{appendix}{#2#1#3}
\Crefformat{appendix}{#2#1#3}
\crefrangeformat{appendix}{#3#1#4 to #5#2#6}
\Crefrangeformat{appendix}{#3#1#4 to #5#2#6}
\crefmultiformat{appendix}{#2#1#3}{ and #2#1#3}{, #2#1#3}{, and #2#1#3}
\Crefmultiformat{appendix}{#2#1#3}{ and #2#1#3}{, #2#1#3}{, and #2#1#3}
\crefrangemultiformat{appendix}{...}
\Crefrangemultiformat{appendix}{...}

\usepackage{algorithm,algpseudocode}

\makeatletter
\newcommand{\sh@thm}[2]{%
  \expandafter\let\csname #1\endcsname\relax
  \expandafter\let\csname end#1\endcsname\relax
  \newtheorem{#1}{#2}%
  \expandafter\let\csname c@#1\endcsname\c@theorem
  \expandafter\def\csname the#1\endcsname{\thetheorem}%
}
\theoremstyle{plain}
\sh@thm{proposition}{Proposition}
\sh@thm{lemma}{Lemma}
\sh@thm{corollary}{Corollary}
\sh@thm{conjecture}{Conjecture}
\theoremstyle{definition}
\sh@thm{definition}{Definition}
\sh@thm{example}{Example}
\sh@thm{problem}{Problem}
\theoremstyle{remark}
\sh@thm{remark}{Remark}
\makeatother

\usepackage[margin=1in]{geometry}

\begin{document}

\begin{frontmatter}

\title{The invariant ring of degree-four rational maps on the projective line}

\author{T. Shaska}
\ead{shaska@oakland.edu}
\address{Department of Mathematics, Oakland University, Rochester, MI 48309, USA}

 \begin{abstract}
Let \(k\) be an algebraically closed field of characteristic zero. Degree-four rational maps on \(\PP^1\), up to conjugation, correspond to pairs of binary forms \((F,G)\in V_5\oplus V_3\). The associated invariant ring is the joint invariant ring \(\cR_{5,3}=k[V_5\oplus V_3]^{\SL_2}\).

We determine a minimal generating set for \(\cR_{5,3}\), consisting of fifty explicit joint transvectants of degrees at most \(18\). As consequences we describe the null cone of \(V_5\oplus V_3\), construct six explicit absolute invariants that generate the function field of the moduli space \(\cM_4^1\), and give an effective criterion for determining conjugacy of degree-four rational maps.
\end{abstract}

\begin{keyword}
Rational maps \sep moduli spaces \sep binary forms \sep joint invariants \sep Gordan's algorithm \sep transvectants \sep computational invariant theory

\MSC[2020] 13A50 \sep 14D22 \sep 14L24 \sep 37P45
\end{keyword}

\end{frontmatter}

\section{Introduction}\label{sec:introduction}

Let $k$ be an algebraically closed field of characteristic zero.  A rational map $\phi:\PP^1\to\PP^1$ of degree $d\geq2$ is represented by a pair of degree-$d$ binary forms $\phi=[f_0:f_1]$ with $\Res(f_0,f_1)\neq0$, taken up to a common nonzero scalar.  Thus
\(
  \Rat_d^1=\PP(V_d\oplus V_d)\setminus V\bigl(\Res(f_0,f_1)\bigr)
\)
is an open subset of $\PP^{2d+1}$.  The group $\PGL_2(k)$ acts by conjugation, and the corresponding quotient is the moduli space $\cM_d^1$ of degree-$d$ rational maps; see \cite{Silverman1998,Silverman2012,West2015}.

The classical Clebsch--Gordan decomposition replaces this conjugation problem by one involving a pair of binary forms of different degrees.  Write $W=k^2$.  A pair $(f_0,f_1)$ is the coordinate expression of an element of $\Sym^d(W^\vee)\otimes W$, on which $\SL_2$ acts by substitution in the source together with the inverse linear action in the target; this is the action induced by conjugation of $\phi$.  The decomposition $\Sym^d(W^\vee)\otimes W\simeq V_{d+1}\oplus V_{d-1}$ is $\SL_2$-equivariant, and in the coordinates fixed above its two projections are
\begin{equation}\label{eq:associated-forms-intro}
  F=\cI_\phi=yf_0-xf_1\in V_{d+1},
  \qquad
  G=\cJ_\phi=\frac{\partial f_0}{\partial x}
             +\frac{\partial f_1}{\partial y}\in V_{d-1}.
\end{equation}
Thus the invariant theory of degree-$d$ rational maps is governed by the joint invariant ring $\cR_{d+1,d-1}=k[V_{d+1}\oplus V_{d-1}]^{\SL_2}$.
This paper studies the first case beyond rational cubics, namely $d=4$.  The corresponding representation is $V_5\oplus V_3$.  Our point of departure is the degree-three analysis of   \cite{2024-04}, together with the Hilbert--null-cone approach to pairs of binary cubics developed in \cite{2004-3,Krishnamoorthy2001}.  These examples suggest an important distinction which becomes essential in degree four: the polynomial invariant ring, the field of absolute invariants, and the ring of regular absolute invariants are different objects and require different methods.

These three objects have different sizes.  The representation $V_5\oplus V_3$ has dimension $10$, the invariant ring $\cR_{5,3}$ has Krull dimension $7$, and $\cM_4^1$ has dimension $6$.  Hence a homogeneous system of parameters for $\cR_{5,3}$ has seven members, while the field of absolute invariants has transcendence degree six.  Neither number is the number of algebra generators of $\cR_{5,3}$.

Our first main result determines the polynomial invariant ring $\cR_{5,3}=k[V_5\oplus V_3]^{\SL_2}$.  Using Gordan's theorem for joint covariants we obtain the unconditional generator bound $\beta(\cR_{5,3})\leq42$, and then prove that $\cR_{5,3}$ is minimally generated by fifty explicit joint invariants, all of degree at most $18$.  In particular, $\beta(\cR_{5,3})=18$.  This gives an explicit weighted-projective invariant model for degree-four rational maps, in which the fifty generators separate the conjugacy classes of rational maps.

A second main result concerns the structure of $\cR_{5,3}$ as a finite module over a polynomial subring.  We determine the null cone of $V_5\oplus V_3$: a pair $(F,G)$ is null exactly when $F$ and $G$ have a common zero of multiplicities at least $3$ and $2$, respectively.  From this we construct an explicit homogeneous system of parameters containing the rational-map resultant, and prove that $\cR_{5,3}$ is free of rank $604$ over the corresponding polynomial ring.  We also show that no homogeneous system of parameters of $\cR_{5,3}$ consists of bihomogeneous invariants.

The invariant field has a much simpler description.  On the dense open set where the cubic $G$ has three distinct roots, a projective change of variable normalizes those roots to $\{0,1,\infty\}$.  The remaining ambiguity is the anharmonic group $\Gamma\simeq S_3$ preserving $\{0,1,\infty\}$, acting on the normalized slice with a scalar correction, and the resulting six-dimensional representation is the regular representation of $S_3$.  From this model we construct six explicit absolute invariants $U_1,\ldots,U_6$ and prove that they generate the function field of moduli: $k(\cM_4^1)=k(U_1,\ldots,U_6)$.  In particular, $\cM_4^1$ is rational over $\Q$.  This gives an explicit invariant-theoretic proof in degree four of the rationality known in general from \cite{Levy2011}.

We finally pass from the invariant field to rings of regular absolute invariants.  On the chart where the cubic discriminant is nonzero, the residual $\Gamma$-action allows us to determine a minimal system of fourteen generators, all of which we realize as quotients of joint invariants of equal degree.  On the full resultant-open moduli space, the coordinate ring is the degree-zero part of $\cR_{5,3}$ localized at the resultant; its monomial structure is described by the Hilbert basis of the corresponding affine semigroup.  These constructions lead to explicit tests for conjugacy and also clarify why the absolute invariant ring cannot be generated merely from quotients of a homogeneous system of parameters.

The Clebsch--Gordan decomposition also retains a direct dynamical meaning.  The form $F$ is the fixed-point form of $\phi$: its zero divisor is the degree-$(d+1)$ fixed-point divisor of the map.  The multiplier at a simple fixed point is an explicit rational expression in $F$, $G$, and the first derivatives of $F$.  Thus the invariant theory of $V_5\oplus V_3$ simultaneously records the geometry of the fixed-point configuration and the conjugacy class of the rational map.

The paper is organized as follows.  \Cref{sec:cg} develops the Clebsch--Gordan coordinates, and \cref{sec:fixed-points} gives their dynamical interpretation.  \Cref{sec:covariants} recalls the required covariants and transvectants.  \Cref{sec-3} determines the minimal generators of $\cR_{5,3}$.  \Cref{sec-4} studies the null cone and homogeneous systems of parameters.  \Cref{sec-5} determines the field of absolute invariants and proves rationality, and \cref{section-6} treats the corresponding rings of regular absolute invariants.  The computational data are collected in the appendices.

\section{Preliminaries}\label{sec-2}

Throughout this paper we let $k$ be an algebraically closed field of characteristic zero and $\Pone$ denote the projective line over $k$.

\subsection{Rational maps and Clebsch--Gordan coordinates}\label{sec:cg}

For $n\geq 0$, let $V_n=\Sym^n(k^2)^\vee$ denote the space of binary forms of order $n$.  A degree-$d$ rational map is a pair $(f_0,f_1)\in V_d\oplus V_d$ with $\Res(f_0,f_1)\neq0$, taken up to a common nonzero scalar.  
For $\sigma=\left(\begin{smallmatrix} a&b\\ c&e\end{smallmatrix}\right)\in\SL_2(k)$ and $f\in V_n$, write $f^\sigma(x,y)=f(ax+by,\,cx+ey)$ for the substitution action.  Conjugation of $\varphi=[f_0:f_1]$ by $\sigma$ gives $\varphi^\sigma=\sigma^{-1}\varphi\sigma=[g_0:g_1]$ with
\begin{equation}\label{eq:conjugation-action}
  g_0=e\,f_0^\sigma-b\,f_1^\sigma,\qquad
  g_1=-c\,f_0^\sigma+a\,f_1^\sigma .
\end{equation}
This is the action of $\SL_2(k)$ on $V_d\oplus V_d$ used throughout. 

The correspondence between rational maps and pairs of binary forms was established in every degree $d\geq2$, see \cite[Lemmas~1 and 2]{2024-04} for details.  Define the linear map
\begin{equation}\label{eq:cg-map}
\begin{split}
  \Theta_d:V_d\oplus V_d   &	\longrightarrow V_{d+1}\oplus V_{d-1}, \\
 (f_0, f_1 ) & \longrightarrow       (F, G) = \left(  yf_0-xf_1,    (f_0)_x+(f_1)_y\right).
 \end{split} 
\end{equation}
The pair $(F,G)=\Theta_d(f_0,f_1)$ is called the Clebsch--Gordan pair of $\varphi$.  Then $\Theta_d(g_0,g_1)=(F^\sigma,G^\sigma)$, so $\Theta_d$ intertwines \cref{eq:conjugation-action} with simultaneous substitution on $(F,G)$, see \cite[Lemma~1]{2024-04} for the proof.

Euler's identity for the degree-$d$ forms $f_0,f_1$ gives $xG+F_y=(d+1)f_0$ and $yG-F_x=(d+1)f_1$.  Hence $\Theta_d$ is injective, and both sides have dimension $2d+2$, so $\Theta_d$ is invertible with inverse
\begin{equation}\label{eq:inverse-cg}
  f_0=\frac{xG+F_y}{d+1},\qquad
  f_1=\frac{yG-F_x}{d+1}.
\end{equation}
For $(F,G)\in V_{d+1}\oplus V_{d-1}$ with $(f_0,f_1)=\Theta_d^{-1}(F,G)$, define the rational-map resultant by
\begin{equation}\label{def:modular-resultant}
  \Delta_{F,G}=\Res(xG+F_y,\,yG-F_x)=(d+1)^{2d}\Res(f_0,f_1).
\end{equation}
The second equality holds because the two arguments are $(d+1)f_0$ and $(d+1)f_1$ and the resultant of two degree-$d$ forms is homogeneous of degree $d$ in the coefficients of each.  The invariant $\Delta_{F,G}$ is the modular resultant introduced in \cite[Definition~2]{2024-04}.  It is homogeneous of total degree $2d$ in the coefficients of $(F,G)$.  It is a nonzero multiple of $\Res(f_0,f_1)$, which is unchanged by \cref{eq:conjugation-action}, and therefore it is an $\SL_2$-invariant.  The projectivization of $\Theta_d$ identifies $\Rat_d^1$ with $\PP(V_{d+1}\oplus V_{d-1})\setminus V(\Delta_{F,G})$, see \cite[Lemma~2]{2024-04} for the proof.

For the remainder of the paper $d=4$.  Thus $F\in V_5$, $G\in V_3$, and
\begin{equation}\label{eq:quartic-reconstruction}
  f_0=\frac{xG+F_y}{5},\qquad
  f_1=\frac{yG-F_x}{5}.
\end{equation}
Every pair $(F,G)\in V_5\oplus V_3$ reconstructs a pair of quartics by \cref{eq:quartic-reconstruction}, and that pair defines a degree-four rational map precisely when $f_0$ and $f_1$ have no common root.  This requirement is the non-vanishing of a single $\SL_2$-invariant of $(F,G)$, namely
\( \Delta_{F,G}=5^8\Res(f_0,f_1)\neq 0.\)


\subsection{Fixed points and multipliers}
\label{sec:fixed-points}

We show that $F$ determines the fixed points of $\varphi$ together with their multiplicities, and that the multiplier at a fixed point is a ratio of forms in $F$, $G$, and the first derivatives of $F$.  A point of $\Pone$ is written $p=(x_0:y_0)$.

\begin{proposition}
\label{prop:fixed-points}
Let $\Res(f_0,f_1)\neq0$ and $(F,G)=\Theta_d(f_0,f_1)$.  A point $p\in\Pone$ is fixed by $\varphi$ if and only if $F(p)=0$.  The multiplicity of $p$ as a root of $F$ is its multiplicity as a fixed point.
\end{proposition}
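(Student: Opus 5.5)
The plan is to work directly from the definition $F=yf_0-xf_1$. A point $p=(x_0:y_0)$ is fixed by $\varphi=[f_0:f_1]$ exactly when $(f_0(p):f_1(p))=(x_0:y_0)$. Since $\Res(f_0,f_1)\neq0$, the vector $(f_0(x_0,y_0),f_1(x_0,y_0))$ is nonzero. Proportionality to $(x_0,y_0)$ is therefore equivalent to the vanishing of the $2\times2$ determinant $y_0f_0(p)-x_0f_1(p)=F(p)$. This proves the first assertion with no further work.

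For multiplicities, I would first fix the notion used. The multiplicity of a fixed point $p$ is the order of vanishing at $p$ of the fixed-point form $yf_0-xf_1$, or equivalently in an affine chart the order of vanishing of $\varphi(z)-z$. I would check that these agree, so the statement holds for either standard definition.

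Since both sides are compatible with conjugation, I would use $\PGL_2$ to move $p$ to a convenient point. By \cite[Lemma~1]{2024-04}, conjugation replaces $F$ by $F^\sigma$, which moves roots together with their multiplicities, and fixed-point multiplicity is conjugation invariant. So assume $p\neq\infty$, say $y_0\neq0$. In the chart $y=1$, $z=x$, write $\varphi(z)=f_0(z,1)/f_1(z,1)$. Then
\[
F(z,1)=f_0(z,1)-zf_1(z,1)=f_1(z,1)\bigl(\varphi(z)-z\bigr).
\]
Suppose $p$ is finite and not a pole, so $f_1(z_0,1)\neq0$. Then $\ord_{z_0}F(z,1)=\ord_{z_0}(\varphi(z)-z)$, which is the fixed-point multiplicity.

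The remaining issue, and the only mildly delicate step, is the degree and point at infinity: $F$ is homogeneous of degree $d+1$, while $\varphi(z)-z$ is a rational function. I would avoid this by conjugating so that $p$ is a finite point which is not a pole. Such a conjugation exists because a fixed point $p$ satisfies $\varphi(p)=p$, and placing $p$ at $0$ makes $\varphi(p)=0$ finite, so $f_1(p)\neq0$ there. As a consistency check, the total count is $d+1$ fixed points with multiplicity, which matches $\deg F$.
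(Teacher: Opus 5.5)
Your proposal is correct and follows essentially the paper's route. The first part is the same proportionality argument, using that $f_0,f_1$ have no common zero. For multiplicities, the paper likewise passes to the chart $y=1$ and reads off the order of vanishing of $F(z,1)=f_0(z,1)-zf_1(z,1)$, which it takes as the definition of fixed-point multiplicity; your factorization $F(z,1)=f_1(z,1)\bigl(\varphi(z)-z\bigr)$ only adds a comparison with the other standard definition. The extra conjugation to avoid poles is unnecessary: if $y_0\neq0$ and $\varphi(p)=p$, then $(f_0(p):f_1(p))=(x_0:y_0)$ already forces $f_1(p)\neq0$.
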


\begin{proof}
By definition $F=yf_0-xf_1$, so $F(p)=0$ if and only if $y_0f_0(p)=x_0f_1(p)$, that is, $\varphi(p)=(f_0(p):f_1(p))=(x_0:y_0)=p$.  The forms $f_0$ and $f_1$ have no common zero, so this is an equality in $\Pone$.  For the multiplicity, choose coordinates in which $p$ lies in the chart $y=1$.  There $F(z,1)=f_0(z,1)-zf_1(z,1)$, whose order of vanishing at $p$ is by definition the multiplicity of $p$ as a fixed point.
\end{proof}

\begin{proposition}
\label{prop:multiplier}
Let $p$ be a root of $F$.  Then the multiplier of $\varphi$ at $p$ is
\begin{equation}\label{eq:multiplier}
 \lambda_p=\frac{yG+dF_x}{yG-F_x}\bigg|_p
        =\frac{xG-dF_y}{xG+F_y}\bigg|_p ,
\end{equation}
the two expressions being equal at every zero of $F$ and at least one of them being defined there.  Equivalently
\begin{equation}\label{eq:multiplier-affine}
 \lambda_p=1+\frac{F_x(p)}{f_1(p)}=1-\frac{F_y(p)}{f_0(p)}.
\end{equation}
\end{proposition}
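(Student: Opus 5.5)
The plan is to compute the multiplier directly in an affine chart containing $p$, rewrite the answer in terms of $F$ using $F(p)=0$, and then use Euler's identity to pass between the two expressions. Write $p=(x_0:y_0)$. By \cref{prop:fixed-points}, $F(p)=0$ means $y_0f_0(p)=x_0f_1(p)$. I will use this relation repeatedly, together with the fact that $f_0$ and $f_1$ have no common zero.

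\emph{Step 1: the chart $y_0\neq0$.} Normalize $y_0=1$ and set $z_0=x_0$. Then $f_0(p)=z_0f_1(p)$. If $f_1(p)=0$, this relation forces $f_0(p)=0$ as well, which is impossible. Hence $f_1(p)\neq0$, and $\varphi(z)=f_0(z,1)/f_1(z,1)$ is regular near $z_0$. The multiplier is $\varphi'(z_0)$. Since $\partial_z f_i(z,1)=(f_i)_x(z,1)$, the quotient rule gives
\[
\lambda_p=\frac{(f_0)_xf_1-f_0(f_1)_x}{f_1^2}\bigg|_p=\frac{(f_0)_x-z_0(f_1)_x}{f_1}\bigg|_p ,
\]
where the second equality substitutes $f_0(p)=z_0f_1(p)$. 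Differentiating $F=yf_0-xf_1$ gives $F_x=y(f_0)_x-f_1-x(f_1)_x$, so at $p$ we have $F_x(p)=(f_0)_x(p)-z_0(f_1)_x(p)-f_1(p)$. Substituting this yields $\lambda_p=1+F_x(p)/f_1(p)$. Replacing $f_1$ by $(yG-F_x)/(d+1)$ from \cref{eq:inverse-cg} and simplifying gives $(yG+dF_x)/(yG-F_x)$ at $p$, with nonzero denominator.

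\emph{Step 2: the chart $x_0\neq0$.} Repeating Step 1 with the roles of $x$ and $y$ exchanged handles this case. The relation now forces $f_0(p)\neq0$. One works in the coordinate $w=y/x$, where the map is $w\mapsto f_1/f_0$, and the same argument gives $\lambda_p=1-F_y(p)/f_0(p)$. The sign changes because $F=yf_0-xf_1$ is antisymmetric under the swap. The multiplier at a fixed point is a conjugation invariant, so computing it in either chart is legitimate. Substituting $f_0=(xG+F_y)/(d+1)$ then gives the second expression in \cref{eq:multiplier}.

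\emph{Step 3: agreement of the two expressions.} Every $p$ has $x_0\neq0$ or $y_0\neq0$, so at least one expression is defined; this gives the ``at least one'' claim. When both coordinates are nonzero, both $f_0(p)$ and $f_1(p)$ are nonzero by the arguments above. Euler's identity $xF_x+yF_y=(d+1)F$ gives $x_0F_x(p)=-y_0F_y(p)$, and combined with $x_0f_1(p)=y_0f_0(p)$ this yields
\[
\frac{F_x}{f_1}=\frac{x_0F_x}{x_0f_1}=\frac{-y_0F_y}{y_0f_0}=-\frac{F_y}{f_0}\quad\text{at }p .
\]
So the two formulas agree wherever both are defined.

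The only delicate point is the bookkeeping in Step 2: the sign conventions, and checking that the multiplier computed in the $w$-chart is the same number as in the $z$-chart when $p$ lies in both. I would settle this by the invariance of the multiplier under the coordinate change $w=1/z$, or simply by the Euler computation in Step 3.
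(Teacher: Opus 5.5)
Your proposal is correct and follows essentially the same route as the paper: the same quotient-rule computation in the chart $y=1$ and its mirror in the chart $x=1$, substitution of the fixed-point relation, and conversion to the $G$-form via \cref{eq:inverse-cg}. The only variations are minor. For agreement you combine Euler's identity at $p$ with $x_0f_1(p)=y_0f_0(p)$, whereas the paper uses the global identity $f_0F_x+f_1F_y=FG$. You also state explicitly why $f_1(p)\neq0$ when $y_0\neq0$, a point the paper leaves implicit.
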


\begin{proof}
By \cref{eq:inverse-cg}, $(d+1)f_1=yG-F_x$ and $(d+1)f_0=xG+F_y$.  Hence $yG+dF_x=(d+1)(f_1+F_x)$ and $yG-F_x=(d+1)f_1$, which gives the first equality in \cref{eq:multiplier-affine}, and similarly for the second.

Suppose $y_0\neq0$ and work in the chart $y=1$.  Put $A(z)=f_0(z,1)$, $B(z)=f_1(z,1)$, $\Phi=A/B$.  Then $F(z,1)=A-zB$ and $\tfrac{d}{dz}F(z,1)=A'-B-zB'$.  At a fixed point $z_0$ one has $A(z_0)=z_0B(z_0)$, so
\[
 \Phi'(z_0)=\frac{A'B-AB'}{B^2}\bigg|_{z_0}
 =\frac{A'-z_0B'}{B}\bigg|_{z_0}
 =1+\frac{F_x(z_0,1)}{B(z_0)} .
\]
This is the first expression in \cref{eq:multiplier-affine}, since $F_x(z,1)=\tfrac{d}{dz}F(z,1)$.  Suppose $x_0\neq0$.  The same computation in the chart $x=1$, applied to $\psi(w)=f_1(1,w)/f_0(1,w)$ and $F(1,w)=wf_0(1,w)-f_1(1,w)$, gives $\lambda_p=1-F_y(p)/f_0(p)$, the second expression in \cref{eq:multiplier-affine}.

Euler's identity $xF_x+yF_y=(d+1)F$ and \cref{eq:inverse-cg} give
\[
 (d+1)(f_0F_x+f_1F_y)=(xG+F_y)F_x+(yG-F_x)F_y=(d+1)FG,
\]
so $f_0F_x+f_1F_y=FG$.  Hence $F_x/f_1+F_y/f_0=FG/(f_0f_1)$ vanishes at every zero of $F$, so the two expressions in \cref{eq:multiplier-affine}, equivalently in \cref{eq:multiplier}, agree wherever both are defined.  The denominators are $(d+1)f_1$ and $(d+1)f_0$ respectively, and $f_0$ and $f_1$ do not vanish simultaneously, so at least one expression is defined at $p$.
\end{proof}

The multiplier is expressed through the Clebsch--Gordan components for correspondences of arbitrary bidegree in \cite[Lemma~7.4]{Gotou2023}, and \cref{eq:multiplier} is the case of a rational map.

\begin{corollary}
\label{cor:fixed-point-relation}
If $F$ has $d+1$ distinct roots $p_0,\ldots,p_d$, then
\(
 \sum_{i=0}^{d}\frac{1}{1-\lambda_{p_i}}=1.
\)
\end{corollary}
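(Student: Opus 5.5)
The plan is to use the expression for the multiplier from \cref{prop:multiplier} and then apply the residue theorem (equivalently, partial fractions) to a suitable rational function on $\Pone$. First, with $\lambda_p = 1 + F_x(p)/f_1(p)$ from \cref{eq:multiplier-affine}, we have
\[
\frac{1}{1-\lambda_p} = -\frac{f_1(p)}{F_x(p)}.
\]
If $p$ is a simple root of $F$ with $y_0\neq0$, then $F_x(p)\neq 0$, since $F_x(z,1)$ is the derivative of $F(z,1)$. Choose coordinates, via a projective change which the $\SL_2$-equivariance of $\Theta_d$ permits, so that no root of $F$ lies at $\infty=(1:0)$. All roots then lie in the chart $y=1$.

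In that chart put $A(z)=f_0(z,1)$, $B(z)=f_1(z,1)$ and $P(z)=F(z,1)=A-zB$. The last expression has degree exactly $d+1$, because $\infty$ is not a root of $F$. The roots $z_i$ are simple, so
\[
\frac{1}{1-\lambda_{p_i}} = -\frac{B(z_i)}{P'(z_i)} = -\operatorname{res}_{z=z_i}\frac{B(z)}{P(z)}\,dz.
\]
The residue theorem on $\Pone$ then gives
\[
\sum_i \frac{1}{1-\lambda_{p_i}} = \operatorname{res}_{z=\infty}\frac{B}{P}\,dz.
\]
Since $\deg B\le d$ and $\deg P=d+1$, the residue at $\infty$ equals $-(\text{negative of})$ the ratio of leading coefficients, namely $(\text{coeff of }z^d\text{ in }B)/(\text{coeff of }z^{d+1}\text{ in }P)$ with the appropriate sign. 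The coefficient of $z^{d+1}$ in $P=A-zB$ is $-b_d$, where $b_d$ is the $z^d$ coefficient of $B$. This coefficient is nonzero, because $F(1,0)=-f_1(1,0)\neq0$. The ratio is therefore $-1$, and tracking signs should give the total $1$.

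For the sign bookkeeping, the contour sum reads $\sum_{\text{finite}}\operatorname{res}(B/P) = -\operatorname{res}_\infty$. For a function behaving like $c/z$ at infinity, the finite residues sum to $c$. Here $B/P\sim b_d z^d/(-b_d z^{d+1}) = -1/z$, so $\sum_i B(z_i)/P'(z_i) = -1$. Hence $\sum_i 1/(1-\lambda_{p_i}) = 1$.

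I expect the only delicate points to be these. First, $\lambda_{p_i}\neq1$ must hold, which is exactly the simplicity of the root (by \cref{prop:fixed-points}, $F_x(p_i)\neq0$). Second, the normalization putting no fixed point at $\infty$ must be justified: the multipliers are conjugation-invariant and $F$ transforms by substitution, so the change of coordinates is harmless.
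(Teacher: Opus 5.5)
Your proof is correct and is essentially the paper's argument. Both move so that $(1:0)$ is not a root of $F$, use $1/(1-\lambda_p)=-f_1(p)/F_x(p)$ from \cref{eq:multiplier-affine}, and apply the residue theorem to $f_1(z,1)/F(z,1)\,dz$, reading the residue at infinity off the leading coefficients $b_d$ and $-b_d$. Your middle sentence about the residue at $\infty$ being ``$-(\text{negative of})$ the ratio'' is muddled, but your final paragraph gets the sign right: the finite residues of a function $\sim c/z$ sum to $c=-1$, so the total is $1$.
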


\begin{proof}
This is the classical rational fixed-point formula, see \cite{Silverman2012} for details, and see \cite[Corollary~7.9]{Gotou2023} for a proof by Clebsch--Gordan reduction.  In the present coordinates it follows from \cref{eq:multiplier-affine}, which gives $1-\lambda_p=-F_x(p)/f_1(p)$ at every root $p$ of $F$.  Choose coordinates in which $(1:0)$ is not a root of $F$ and work in the chart $y=1$, and consider the differential $\omega=-\bigl(f_1(z,1)/F(z,1)\bigr)\,dz$ on $\Pone$.  Its poles in the affine chart are the $d+1$ roots of $F$, all simple, with residue $-f_1(p)/F_x(p)=1/(1-\lambda_p)$, while in the coordinate $w=1/z$ one has $\omega=-w^{-1}\,dw+O(1)$, so the residue at infinity is $-1$.  The sum of all residues vanishes.
\end{proof}

\begin{proposition}
\label{prop:trace-multiplier}
Let $\Res(f_0,f_1)\neq0$, let $p$ be a root of $F$, and write $(f_0(p),f_1(p))=\mu\,p$ with $\mu\in k^\times$.  Then
\begin{equation}\label{eq:trace-multiplier}
  G(p)=\mu\,(\lambda_p+d).
\end{equation}
In particular $\lambda_p=-d$ if and only if $G(p)=0$.
\end{proposition}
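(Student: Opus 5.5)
The plan is to combine the inverse Clebsch--Gordan formulas \cref{eq:inverse-cg} with the multiplier formula \cref{eq:multiplier-affine}, and then use Euler's identity for $F$ at the root $p=(x_0:y_0)$. Fix a representative $(x_0,y_0)$ of $p$. Since $F(p)=0$, \cref{prop:fixed-points} gives $(f_0(p),f_1(p))=\mu\,(x_0,y_0)$, and $\mu\neq0$ because $f_0,f_1$ have no common zero. Both sides of \cref{eq:trace-multiplier} scale like degree $d-1$ in the representative: $G(p)$ is homogeneous of degree $d-1$, and $\mu$ scales by $t^{d-1}$ when $(x_0,y_0)\mapsto t(x_0,y_0)$. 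So the identity does not depend on the chosen representative.

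The key computation is the following. By \cref{eq:inverse-cg},
\[
(d+1)f_0(p)=x_0G(p)+F_y(p),\qquad (d+1)f_1(p)=y_0G(p)-F_x(p).
\]
Multiplying the first equation by $x_0$ and the second by $y_0$ and adding, Euler's identity $x_0F_x(p)+y_0F_y(p)=(d+1)F(p)=0$ makes the derivative terms cancel in the combination $x_0F_y(p)-y_0F_x(p)$. This yields a relation expressing $G(p)$ through $\mu$ and one derivative of $F$.

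Next, split into cases according to which coordinate of $p$ is nonzero. If $y_0\neq0$, then $f_1(p)=\mu y_0\neq0$, and \cref{eq:multiplier-affine} gives $F_x(p)=(\lambda_p-1)\mu y_0$. Substituting into $(d+1)\mu y_0=y_0G(p)-F_x(p)$ and dividing by $y_0$ gives $(d+1)\mu=G(p)-(\lambda_p-1)\mu$, that is, $G(p)=\mu(\lambda_p+d)$. If $x_0\neq0$, the second expression in \cref{eq:multiplier-affine} gives $F_y(p)=(1-\lambda_p)\mu x_0$. Substituting into $(d+1)\mu x_0=x_0G(p)+F_y(p)$ yields the same identity.

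The "in particular" clause is immediate because $\mu\neq0$. The only real care needed is the bookkeeping of representatives and making sure the right chart is used so that the denominators in \cref{eq:multiplier-affine} are nonzero. \cref{prop:multiplier} already guarantees this, so I expect no serious obstacle.
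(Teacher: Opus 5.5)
Your chart-by-chart argument is correct and is essentially the paper's proof: the paper re-derives $F_x(p)=y_0\bigl[G(p)-(d+1)\mu\bigr]$ from Euler's identity for $f_1$, which is just \cref{eq:inverse-cg} evaluated at $p$ as you use it, and then substitutes into \cref{eq:multiplier-affine} as you do. Drop the ``key computation'' paragraph, however. Euler's identity annihilates $x_0F_x(p)+y_0F_y(p)$, not $x_0F_y(p)-y_0F_x(p)$, which equals $-\alpha(x_0^2+y_0^2)$ when $(F_x,F_y)(p)=\alpha(y_0,-x_0)$, so the derivative terms do not cancel (and dividing by $x_0^2+y_0^2$ would fail at $p=(1:\sqrt{-1})$); your case analysis never uses that paragraph.
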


\begin{proof}
The forms $f_0$ and $f_1$ have no common zero, so $\mu\neq0$.  Suppose first that $y_0\neq0$, so that $f_1(p)=\mu y_0\neq0$.  Euler's identity for $f_1$ gives $x_0(f_1)_x(p)=d\mu y_0-y_0(f_1)_y(p)$.  Substituting this into $F_x=y(f_0)_x-f_1-x(f_1)_x$ gives
\[
 F_x(p)=y_0(f_0)_x(p)-\mu y_0-d\mu y_0+y_0(f_1)_y(p)
       =y_0\bigl[G(p)-(d+1)\mu\bigr].
\]
By \cref{eq:multiplier-affine}, $\lambda_p=1+F_x(p)/f_1(p)=1+\bigl[G(p)-(d+1)\mu\bigr]/\mu=G(p)/\mu-d$, which is \cref{eq:trace-multiplier}.  If $y_0=0$ then $x_0\neq0$, and the same computation applied to $f_0$ and to the second expression in \cref{eq:multiplier-affine} gives the same identity.  The last assertion is immediate from $\mu\neq0$.
\end{proof}

\Cref{eq:trace-multiplier} is the specialization of \cite[Lemma~7.4]{Gotou2023} to a rational map, solved for $G$.

\begin{remark}
\label{rem:trace-and-critical}
The two components of $\Theta_d$ admit the following description.  Write $\Phi=(f_0,f_1):k^2\to k^2$.  Then $F$ is the pairing of $v=(x,y)$ with $\Phi(v)$ under the invariant alternating form, so $F$ vanishes exactly where $\Phi(v)$ is proportional to $v$, and $G=\tr D\Phi$.  At a fixed point $p$, Euler's identity makes $p$ an eigenvector of $D\Phi(p)$ with eigenvalue $d\mu$, so the second eigenvalue is $G(p)-d\mu$, and \cref{eq:trace-multiplier} states that $\lambda_p$ is the ratio of that eigenvalue to $\mu$.
\end{remark}

\begin{remark}\label{rem:nullcone-dynamical}
\Cref{thm:nullcone-53} below states that $(F,G)$ is null exactly when $F$ and $G$ have a common zero $p$ of multiplicities at least $3$ and $2$.  A null pair does not represent a rational map.  Indeed, at such a point $F_x$, $F_y$, and $G$ all vanish, so $f_0(p)=f_1(p)=0$ by \cref{eq:inverse-cg} and $\Delta_{F,G}=0$.  On the locus $\Delta_{F,G}\neq0$ the zero divisor of $F$ is the fixed-point divisor by \cref{prop:fixed-points}.  The null conditions therefore describe the boundary pairs at which three points of this divisor have collided at $p$ and $G$ vanishes there as well, and both the numerator and the denominator in \cref{eq:multiplier} vanish at such a point.  On the locus $\Delta_{F,G}\neq0$ the vanishing of the discriminant of $F$ says that two fixed points collide, that is, that $\varphi$ has a parabolic fixed point with $\lambda=1$.
\end{remark}

\subsection{Covariants and transvectants}\label{sec:covariants}

The invariants of $\cR_{5,3}$ are built from covariants of the two components of the Clebsch--Gordan pair.  Let $A$ and $B$ be binary forms of orders $m$ and $n$.  We use the factorial-normalized transvectant
\begin{equation}\label{eq:transvectant}
 (A,B)_r=
 \frac{(m-r)!(n-r)!}{m!n!}
 \sum_{j=0}^{r}(-1)^j\binom{r}{j}
 \frac{\partial^r A}{\partial x^{r-j}\partial y^j}
 \frac{\partial^r B}{\partial x^j\partial y^{r-j}}.
\end{equation}
It has order $m+n-2r$, and if $A$ and $B$ have coefficient degrees $a$ and $b$, then $(A,B)_r$ has coefficient degree $a+b$, see \cite[Chapter~1]{Dolgachev2003} for the classical theory.  An order-zero covariant is an invariant.  We retain the bidegree in the coefficients of $(F,G)$, so that an invariant of bidegree $(a,b)$ has total degree $a+b$.

The factorial prefactor in \cref{eq:transvectant} is the only point at which our conventions must be fixed rather than cited, because the explicit rational coefficients of every invariant below depend on it.  The denominators introduced in the course of the paper divide products of factorials of integers at most $21$.

For the quintic component $F$ of the Clebsch--Gordan pair, put
\begin{equation}\label{eq:quintic-initial}
 H=(F,F)_2,\qquad i=(F,F)_4,\qquad t=(F,H)_1,
\end{equation}
of orders $6,2,9$ and coefficient degrees $2,2,3$.  Sylvester's system of $23$ fundamental covariants of the binary quintic is classical, see \cite{Shank2004} for a modern account and \cite[Chapter~1]{Dolgachev2003} for the underlying theory.  We do not reprove its completeness.  Since the whole paper refers to its members by name, and since \cref{eq:transvectant} fixes their scaling, we record the system in the recursive form used here:
\begin{equation}\label{eq:quintic-covariants}
\setlength{\arraycolsep}{3pt}
\begin{array}{llll}
 c_{3,5}=(i,F)_1 & c_{3,3}=(i,F)_2 & c_{4,6}=(c_{3,3},F)_1 & c_{4,4}=(c_{3,3},F)_2\\
 c_{4,0}=(i,i)_2 & c_{5,7}=(c_{4,4},F)_1 & c_{5,3}=(c_{3,3},i)_1 & c_{5,1}=(c_{3,3},i)_2\\
 c_{6,4}=(c_{4,4},i)_1 & c_{6,2}=(c_{3,3},c_{3,3})_2 & c_{7,5}=(c_{4,4},c_{3,3})_1 & c_{7,1}=(c_{4,4},c_{3,5})_4\\
 c_{8,2}=(c_{4,4},c_{4,6})_4 & c_{8,0}=(c_{4,4},c_{4,4})_4 & c_{9,3}=(c_{6,2},c_{3,3})_1 & c_{11,1}=(c_{5,1},c_{6,2})_1\\
 c_{12,0}=(c_{6,2},c_{6,2})_2 & c_{13,1}=(c_{7,1},c_{6,2})_1 & c_{18,0}=(c_{13,1},c_{5,1})_1 &
\end{array}
\end{equation}
The subscripts record coefficient degree and order, and each entry is consistent with the two rules stated after \cref{eq:transvectant}.  Together with $F,H,i,t$, the forms in \cref{eq:quintic-covariants} give $23$ covariants.  Four of them are invariants, namely $c_{4,0},c_{8,0},c_{12,0},c_{18,0}$.  The remaining nineteen have positive order and reappear in \cref{eq:quintic-positive-family}.

For the cubic component $G$, use
\begin{equation}\label{eq:cubic-covariants}
  G,\qquad h=(G,G)_2,\qquad q=(G,h)_1,\qquad D=(h,h)_2,
\end{equation}
of orders $3,2,3,0$ and coefficient degrees $1,2,3,4$.  These four generate the covariants of the binary cubic, and $D$ is the discriminant up to a scalar.

The Clebsch construction forms order-zero transvectants $(U,V)_r$, where $U$ is a monomial in positive-order quintic covariants, $V$ is a monomial in $G,h,q$, and $r=\ord(U)=\ord(V)$.  The pure invariants $c_{4,0},c_{8,0},c_{12,0},c_{18,0}$ and $D$ are included separately.  The precise finite selection is supplied by Gordan's joint-covariant theorem in \cref{thm:gordan-family}.

The covariants of \cref{eq:quintic-initial,eq:cubic-covariants} also express the objects of \cref{sec:cg,sec:fixed-points}.  The cubic discriminant $D$ cuts out the locus where $G$ has a repeated root, and the rational-map resultant $\Delta_{F,G}$ of \cref{def:modular-resultant} is a joint invariant of total degree $2d$, expressed in the generators in \cref{app:resultant}.  The critical divisor admits the same description.  Writing $W=(f_0)_x(f_1)_y-(f_0)_y(f_1)_x$ for the Jacobian of $(f_0,f_1)$, whose zero divisor is the critical divisor of $\varphi$, one has
\begin{equation}\label{eq:critical-divisor}
  W=\frac{d^2}{2}\,(F,F)_2+\frac{d}{(d+1)^2}\,G^2+\frac{d(d-1)}{d+1}\,(F,G)_1 ,
\end{equation}
which for $d=4$ reads $W=8H+\tfrac{4}{25}G^2+\tfrac{12}{5}(F,G)_1$.  Thus the fixed points, the multipliers, and the critical points of $\varphi$ are all determined by covariants of the pair $(F,G)$.

\section{Gordan's theorem and the joint invariant ring}
\label{sec-3} 

Set $\cR=\cR_{5,3}=k[V_5\oplus V_3]^{\SL_2}$.  The polynomial ring $k[V_5\oplus V_3]$ is bigraded by the degrees in the coefficients of $F$ and of $G$, and $\SL_2$ acts on the two summands separately, so it preserves each bihomogeneous component.  Hence every bihomogeneous part of an invariant is again an invariant, and $\cR$ inherits the bigrading,
\[
  \cR=\bigoplus_{a,b\geq 0}\cR_{a,b},
  \qquad
  \cR_{a,b}=\cR\cap k[V_5\oplus V_3]_{a,b}.
\]
We proceed in two steps.  Gordan's theorem first gives a finite generating family and an unconditional degree bound, and the bigraded character formula then reduces the determination of a minimal generating set to finitely many rank computations.

For a connected finitely generated graded algebra $T$, write $\beta(T)$ for the least integer $B$ such that $T$ is generated by its homogeneous pieces of degrees at most $B$.  Our first objective is an unconditional bound for $\beta(\cR)$, obtained from Gordan's theorem independently of any presentation of the Hilbert series.

Let $\cA^+$ be the ordered family of positive-order quintic covariants
\begin{equation}\label{eq:quintic-positive-family}
\begin{split}
\cA^+	&	=(F,H,i,t,c_{3,5},c_{3,3},c_{4,6},c_{4,4},c_{5,7},   c_{5,3}, c_{5,1}, c_{6,4}, c_{6,2}, c_{7,5}, c_{7,1}, c_{8,2},c_{9,3},c_{11,1},c_{13,1}),
\end{split}
\end{equation}
and let $\cB^+=(G,h,q)$ be the positive-order cubic covariants.  Their order vectors are
\begin{equation}\label{eq:order-vectors}
 \boldsymbol\omega_A=(5,6,2,9,5,3,6,4,7,3,1,4,2,5,1,2,3,1,1),\qquad
 \boldsymbol\omega_B=(3,2,3).
\end{equation}
and their coefficient-degree vectors are
\begin{equation}\label{eq:degree-vectors}
 \boldsymbol d_A=(1,2,2,3,3,3,4,4,5,5,5,6,6,7,7,8,9,11,13),\qquad
 \boldsymbol d_B=(1,2,3).
\end{equation}
For $\alpha\in\N^{19}$ and $\gamma\in\N^3$, write $A^\alpha=\prod A_i^{\alpha_i}$ and $B^\gamma=G^{\gamma_1}h^{\gamma_2}q^{\gamma_3}$.

Consider the affine semigroup
\begin{equation}\label{eq:gordan-semigroup}
 \cH=
 \left\{(\alpha,\gamma,r)\in\N^{19}\times\N^3\times\N:
   \boldsymbol\omega_A\mathbin{\cdot}\alpha
   =r=
   \boldsymbol\omega_B\mathbin{\cdot}\gamma
 \right\}.
\end{equation}
A nonzero element of $\cH$ is called \emph{irreducible} if it is not the sum of two nonzero elements of $\cH$.  Denote the finite set of irreducible elements by $\Hilb(\cH)$.

\begin{theorem}
\label{thm:gordan-family}
The invariant ring $\cR$ is generated by
\begin{equation}\label{eq:gordan-family}
 \cG_{\Gor}=
 \{c_{4,0},c_{8,0},c_{12,0},c_{18,0},D\}
 \cup
 \left\{
  (A^\alpha,B^\gamma)_r:
  (\alpha,\gamma,r)\in\Hilb(\cH)
 \right\},
\end{equation}
after zero transvectants and repetitions are deleted.
\end{theorem}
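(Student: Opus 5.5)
The proof is an application of Gordan's classical theorem on joint covariants. It combines the completeness of the covariant systems of the quintic (Sylvester's $23$ covariants in \cref{eq:quintic-covariants} together with $F,H,i,t$) and of the cubic ($G,h,q,D$ of \cref{eq:cubic-covariants}) with the Clebsch reduction of joint covariants to transvectants of products.

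\emph{Step 1: reduction to transvectants of products.} I would first invoke the standard consequence of the symbolic method (Clebsch–Gordan series for covariants). Every joint covariant of $V_5\oplus V_3$ of bidegree $(a,b)$ is a linear combination of transvectants $(U,V)_r$. Here $U$ is a covariant of $F$ alone of degree $a$, and $V$ is a covariant of $G$ alone of degree $b$. This holds because $k[V_5\oplus V_3\oplus k^2]^{\SL_2}$ is the invariant part of $k[V_5\oplus k^2]\otimes k[V_3\oplus k^2]$ with the two $x,y$ variables identified. By the Clebsch–Gordan decomposition $V_m\otimes V_n=\bigoplus_r V_{m+n-2r}$, the order-zero component is reached by $(U,V)_r$ with $r=\ord U=\ord V$. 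By completeness of the single-form systems, $U$ is a polynomial in the members of $\cA^+$ and the four quintic invariants, and $V$ is a polynomial in $G,h,q,D$. Pulling the pure invariants out by bilinearity, every element of $\cR_{a,b}$ is a combination of products of $c_{4,0},\dots,c_{18,0},D$ with transvectants $(A^\alpha,B^\gamma)_r$, where $\omega_A\cdot\alpha=r=\omega_B\cdot\gamma$. That is, $(\alpha,\gamma,r)\in\cH$.

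\emph{Step 2: Gordan's reduction to irreducible exponents.} Next I would show that if $(\alpha,\gamma,r)=(\alpha',\gamma',r')+(\alpha'',\gamma'',r'')$ with both summands nonzero in $\cH$, then
\[
(A^\alpha,B^\gamma)_r\equiv (A^{\alpha'},B^{\gamma'})_{r'}\,(A^{\alpha''},B^{\gamma''})_{r''}
\]
modulo terms of strictly smaller "grade". Here grade is ordered lexicographically by total degree and then by the transvectant index in Gordan's filtration. This is Gordan's key lemma: a transvectant of products differs from the product of transvectants by a combination of transvectants $(U',V')_{s}$ in which $U',V'$ are products containing factors that are themselves transvectants of the pieces. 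These are re-expressed via the single-form completeness as polynomials in $\cA^+$ and $\cB^+$ of the same degree, paired at higher index. Alternatively, the correction terms involve lower-order factors and are handled by induction. An induction on (degree, then $-r$) then shows that all $(A^\alpha,B^\gamma)_r$ lie in the subalgebra generated by $\cG_{\Gor}$. Since $\cH$ is the set of lattice points of a rational cone, Gordan's lemma gives finiteness of $\Hilb(\cH)$. Transvectants that vanish identically, and coincident ones, may be discarded.

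\emph{Main obstacle.} The delicate point is Step 2: making the Gordan filtration precise enough that the error terms are genuinely smaller. The subtlety is that after re-expressing correction covariants through $\cA^+$ and $\cB^+$, one may get exponent vectors not comparable to the original ones. I would handle this exactly as in Gordan's proof for systems of forms (as in \cite[Chapter~1]{Dolgachev2003} or Olver's treatment). Induct first on total degree $a+b$; within fixed degree, induct downward on the index $r$. This uses that every correction term has a strictly larger transvectant index between the $F$-part and the $G$-part in its symbolic expansion, and that index is bounded by $\min$ of the orders. If this filtration argument proves awkward, I would instead cite Gordan's joint-covariant theorem directly, with the two complete systems as input. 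Then the only work is to check that the hypotheses are met: both systems are complete, and the irreducible solutions of the order equations are exactly $\Hilb(\cH)$.
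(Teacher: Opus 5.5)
Your self-contained Step~2 has Gordan's filtration backwards, and the induction as you set it up would not close. If $\ord U_i=\ord V_i=r_i$, the Clebsch--Gordan decomposition of $V_{r_1}\otimes V_{r_2}$ on both sides gives
\[
 (U_1,V_1)_{r_1}(U_2,V_2)_{r_2}
 =c\,(U_1U_2,V_1V_2)_{r_1+r_2}
 +\sum_{j\geq1}c_j\bigl((U_1,U_2)_j,(V_1,V_2)_j\bigr)_{r_1+r_2-2j},
 \qquad c\neq0 .
\]
So the correction terms are paired at strictly \emph{smaller} index, not larger. For linear forms, for instance, $(\ell,p)_1(m,q)_1-(\ell m,pq)_2=\tfrac12(\ell,m)_1(p,q)_1$, which is a term of index $0$. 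A downward induction on $r$ therefore never reaches the correction terms. The repair is to induct on total degree and then \emph{upward} on $r$ within a bidegree:
\begin{itemize}
\item the two factors on the left are transvectants attached to nonzero elements of $\cH$, so each has smaller degree and is covered by the degree induction;
\item by single-form completeness, each correction term is a combination of pure-invariant multiples of $(A^{\beta},B^{\delta})_{r-2j}$ with $(\beta,\delta,r-2j)\in\cH$;
\item such a multiple is covered by the degree induction if a pure invariant actually occurs, and by the induction on $r$ otherwise;
\item the base case $r=0$ consists only of products of pure invariants.
\end{itemize}
Repaired this way, Step~2 is a direct proof of the order-zero case of Gordan's theorem. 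It never passes through the full covariant algebra or its residual orders.

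Your fallback is exactly the paper's proof: completeness of the two single-form systems, Gordan's joint-covariant theorem in Olive's form, then specialization to order zero. One point there needs tightening. Gordan's theorem generates all of $\Cov(V_5\oplus V_3)$, and its order system carries two residual orders $u,v$, so its irreducible solutions are not ``exactly $\Hilb(\cH)$''. The paper closes this with two observations you should state:
\begin{itemize}
\item Order is a nonnegative additive grading on the covariant algebra. Hence the order-zero part $\cR$ is generated by the order-zero members of any order-homogeneous generating family.
\item Residual orders are nonnegative and add under sums. Hence an irreducible solution with $u=v=0$ is the same thing as an element of $\Hilb(\cH)$, and an irreducible solution containing a pure invariant is that invariant alone.
\end{itemize}
The paper's route is shorter but rests entirely on the cited general theorem. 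Your repaired Step~2 is longer but elementary and specific to the invariant case.
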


\begin{proof}
The $23$ covariants in \cref{eq:quintic-initial} and \cref{eq:quintic-covariants} generate $\Cov(V_5)$, and the four covariants in \cref{eq:cubic-covariants} generate $\Cov(V_3)$, see \cite{Shank2004} for details.  Gordan's joint-covariant theorem, in the modern form \cite[Theorem~5.7]{Olive2017}, states that $\Cov(V_5\oplus V_3)$ is generated by the transvectants associated to the irreducible solutions of the corresponding order system.

An invariant has order zero.  In Gordan's notation the two residual orders are therefore both zero, so the order system specializes precisely to \cref{eq:gordan-semigroup}.  A pure invariant factor passes unchanged through a transvectant.  Hence any candidate containing such a factor is reducible unless it is that pure invariant itself.  This leaves the five displayed pure invariants and the transvectants formed from $\cA^+$ and $\cB^+$.  Finally, order defines a nonnegative grading on the covariant algebra and is additive under multiplication.  Hence the order-zero component of the algebra generated by a homogeneous covariant generating family is generated by its order-zero members.  Those members consequently generate $\cR$.
\end{proof}

The theorem is already a global finite-generation statement.  The next result turns it into a useful degree bound for the complete Gordan family.

\begin{theorem}
\label{thm:gordan-degree-bound}
Every member of the Gordan family \cref{eq:gordan-family} has total coefficient degree at most $42$.  Consequently
\(
  \beta(\cR)\leq42.
\)
\end{theorem}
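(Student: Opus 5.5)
The plan is to bound the total coefficient degree of each member of \cref{eq:gordan-family} directly from the numerical data \cref{eq:order-vectors,eq:degree-vectors}, with no explicit enumeration of $\Hilb(\cH)$. The pure invariants $c_{4,0},c_{8,0},c_{12,0},c_{18,0},D$ have degrees $4,8,12,18,4$, all at most $42$. A transvectant $(A^\alpha,B^\gamma)_r$ has total degree $\boldsymbol d_A\cdot\alpha+\boldsymbol d_B\cdot\gamma$ by the degree rule after \cref{eq:transvectant}. So it suffices to bound this linear form over the irreducible elements of $\cH$. The conclusion $\beta(\cR)\le 42$ then follows at once from \cref{thm:gordan-family}, since a generating family of degree at most $42$ exists.

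\emph{Step 1: the cubic part.} Compare degree with order for the three cubic covariants: $G,h,q$ have (degree, order) equal to $(1,3),(2,2),(3,3)$. Hence $\boldsymbol d_B\cdot\gamma\le\boldsymbol\omega_B\cdot\gamma=r=\boldsymbol\omega_A\cdot\alpha$. Therefore
\[
 \boldsymbol d_A\cdot\alpha+\boldsymbol d_B\cdot\gamma\le (\boldsymbol d_A+\boldsymbol\omega_A)\cdot\alpha ,
\]
and the problem reduces to controlling $\alpha$ alone.

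\emph{Step 2: a length bound on irreducibles.} I will invoke the classical bound for minimal solutions of a single homogeneous linear Diophantine equation $\sum a_ix_i=\sum b_jy_j$ with positive coefficients (Lambert's bound, refining Pottier). It states that every minimal nonzero solution satisfies $\sum x_i\le\max_j b_j$ and $\sum y_j\le\max_i a_i$. Applied to \cref{eq:gordan-semigroup}, it gives $|\alpha|\le\max\boldsymbol\omega_B=3$ and $|\gamma|\le 9$. If I must justify this rather than cite it, I would use the usual running-sum argument. List the $A$-factors and $B$-factors of an irreducible element as a sequence, and order them greedily so that the partial difference (order of $A$-part minus order of $B$-part) stays in a fixed window: add a $B$-factor when the difference is positive, and an $A$-factor otherwise. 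Then apply pigeonhole to the values of the difference observed just before each $A$-step. If $|\alpha|$ exceeded $3$, two such values would coincide, and the block between them would be a nonzero element of $\cH$ splitting off, contradicting irreducibility. Getting this pigeonhole set up with the correct window, so that it yields exactly $|\alpha|\le 3$ rather than the weaker combined bound $|\alpha|+|\gamma|\le 12$, is the step I expect to be the main obstacle. If the sharp form resists, I will simply cite Lambert's theorem.

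\emph{Step 3: the numerical maximum.} The entries of $\boldsymbol d_A+\boldsymbol\omega_A$ are
\[
(6,8,4,12,8,6,10,8,12,8,6,10,8,12,8,10,12,12,14),
\]
so their maximum is $14$, attained only at $c_{13,1}$. With $|\alpha|\le 3$ this gives total degree at most $3\cdot 14=42$. As a sanity check, the bound is attained. Take $\alpha=3e_{c_{13,1}}$ and $\gamma=e_q$, so $r=3$. This element is irreducible, because the only way to reach order $3$ on the cubic side is the single factor $q$, or a single $G$, and $q$ cannot be split. Its degree is $39+3=42$, so the estimate in the statement is sharp for the Gordan family, even though the minimal generating degree is later shown to be $18$.
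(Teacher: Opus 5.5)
Your proof is correct, and it takes a genuinely different, computer-free route to the bound.

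\emph{The paper's route.} The paper forgets the names of the covariants and views an irreducible element of $\cH$ as a primitive partition identity, with parts in $\{1,\dots,7,9\}$ on the quintic side and $\{2,3\}$ on the cubic side. It uses the Diaconis--Graham--Sturmfels length bound $k+l\le 17$ to cap the common order at $48$. It then enumerates all $51$ primitive patterns by computer and reads the maximum $\widehat d=42$, attained at $1^3=3$, off \cref{tab:gordan-patterns}.

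\emph{Your route.} You replace the enumeration by three facts: the bipartite bound $|\alpha|\le\max\boldsymbol\omega_B=3$, the componentwise inequality $\boldsymbol d_B\le\boldsymbol\omega_B$, and $\max(\boldsymbol d_A+\boldsymbol\omega_A)=14$. Together these give $42$ in a few lines. They also explain why the extremal case is $c_{13,1}^3$ paired with an order-three cubic covariant.

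\emph{The step you flagged is not an obstacle.} With your greedy rule, the running difference stays in $[1-\max\boldsymbol\omega_B,\max\boldsymbol\omega_A]=[-2,9]$. The process never stalls: if the difference is $\le 0$ and no $A$-factor remains, the remaining $B$-factors force it to be $0$ with nothing left. The values just before the $A$-steps therefore lie in $\{-2,-1,0\}$, so four $A$-factors force a repeated value. The block running from the first of those two $A$-steps up to, but not including, the second is then a nonzero element of $\cH$. Its complement contains an $A$-factor, which contradicts irreducibility. So your sketch already yields exactly $|\alpha|\le 3$, independently of any citation.

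\emph{What the enumeration buys.} Beyond this theorem, the paper needs the explicit pattern list. It is reused in the proof of \cref{thm:minimal-generators} to locate the bidegrees with $\cG_{a,b}\neq\emptyset$ in total degrees $32$ to $42$. Your bound alone does not supply that list.

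\emph{A minor point on sharpness.} Your remark that $42$ is attained additionally needs $(c_{13,1}^3,q)_3\neq 0$, because zero transvectants are deleted from the family. This does not affect the upper bound you prove.
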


\begin{proof}
Forget the names of covariants having the same order.  An irreducible element of $\cH$ then gives a primitive partition identity whose parts on the quintic side belong to
\(
 \{1,2,3,4,5,6,7,9\}
\)
and whose parts on the cubic side belong to $\{2,3\}$.  If the projected identity had a proper subidentity, choosing the corresponding occurrences of the named covariants would decompose the original element of $\cH$.  Thus the projected identity is primitive.

The length bound for primitive partition identities \cite{DiaconisGrahamSturmfels1996} gives total length at most $2\cdot9-1=17$.  Since every part on the cubic side is at most $3$, their common order is at most $48$.  The enumeration in \cref{app:gordan-patterns} runs over all common orders at most $51$ and therefore covers this range.  The complete primitive list in \cref{tab:gordan-patterns} contains $51$ patterns, and their largest common order is $21$.  For a quintic covariant of order $j$, the largest coefficient degree occurring in \cref{eq:quintic-positive-family} is
\begin{equation}\label{eq:max-degree-by-order}
\begin{array}{c|rrrrrrrr}
j&1&2&3&4&5&6&7&9\\ \hline
\delta_A(j)&13&8&9&6&7&4&5&3,
\end{array}
\end{equation}
while $\delta_B(2)=2$ and $\delta_B(3)=3$.  The last column of \cref{tab:gordan-patterns} is obtained by adding these maximal coefficient degrees along each identity.  The maximum of these bounds is $42$, occurring for the primitive pattern $1+1+1=3$.  The five pure invariants have degrees at most $18$, completing the proof.
\end{proof}

\begin{corollary}
\label{cor:gordan-reduction}
Let $\cS\subseteq\cR$ be a graded subalgebra.  If
\(
  \cS_n=\cR_n\qquad(0\leq n\leq42),
\)
then $\cS=\cR$.  Equivalently, it is enough to express every member of $\cG_{\Gor}$ as a polynomial in the generators of $\cS$.
\end{corollary}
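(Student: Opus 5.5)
The plan is to combine \cref{thm:gordan-degree-bound} with the grading of $\cR$. The corollary has two parts. The first is that agreement in degrees at most $42$ forces $\cS=\cR$. The second is that this is equivalent to expressing every member of $\cG_{\Gor}$ through the generators of $\cS$.

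First I show that if $\cS_n=\cR_n$ for $0\le n\le 42$, then $\cS$ contains every element of $\cG_{\Gor}$. Each member of the Gordan family \cref{eq:gordan-family} is a homogeneous invariant. Indeed, it is either a pure invariant or an order-zero transvectant of products of bihomogeneous covariants, hence bihomogeneous and in particular homogeneous of total degree equal to the sum of the coefficient degrees of its factors. By \cref{thm:gordan-degree-bound} this total degree is at most $42$. Each generator therefore lies in some $\cR_n=\cS_n$ with $n\le42$, so it lies in $\cS$. Since $\cS$ is a subalgebra containing the generating set $\cG_{\Gor}$ of $\cR$ from \cref{thm:gordan-family}, we get $\cR\subseteq\cS$. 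Hence $\cS=\cR$.

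For the equivalence, suppose every member of $\cG_{\Gor}$ is a polynomial in the generators of $\cS$. Then $\cG_{\Gor}\subseteq\cS$, and the same argument gives $\cS=\cR$, so in particular $\cS_n=\cR_n$ for all $n$. Conversely, if $\cS_n=\cR_n$ for $n\le 42$, the first paragraph shows $\cG_{\Gor}\subseteq\cS$, so each member of $\cG_{\Gor}$ is such a polynomial. Both conditions are therefore equivalent to $\cS=\cR$.

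There is no real obstacle here. The only point needing care is that the Gordan generators are homogeneous, with total degree equal to the sum of the factors' coefficient degrees. This follows from the rule stated after \cref{eq:transvectant} that $(A,B)_r$ has coefficient degree $a+b$. The degree bound itself is already supplied by \cref{thm:gordan-degree-bound}.
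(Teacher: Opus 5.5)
Your proposal is correct and is essentially the paper's argument: by the degree bound every member of $\cG_{\Gor}$ lies in some $\cR_n=\cS_n$ with $n\leq42$, so $\cG_{\Gor}\subseteq\cS$, and \cref{thm:gordan-family} then gives $\cR\subseteq\cS$. The paper leaves implicit the homogeneity of the Gordan generators and the ``equivalently'' clause, and you spell both out correctly.
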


\begin{proof}
By \cref{thm:gordan-degree-bound}, $\cR$ has a generating family contained in degrees at most $42$.  Under the stated hypothesis every member of that family lies in $\cS$, so \cref{thm:gordan-family} gives $\cR\subseteq\cS$.  The reverse inclusion is part of the definition of $\cS$.
\end{proof}

\subsection{Bigraded dimensions and the Hilbert series}

Let $\chi_{n,a}(z)$ be the character of $\Sym^a(V_n)$.  It is determined by
\begin{equation}\label{eq:symmetric-character}
  \sum_{a\geq0}\chi_{n,a}(z)u^a
  =\prod_{j=0}^{n}\frac{1}{1-u z^{n-2j}}.
\end{equation}
The multiplicity of the trivial representation in an $\SL_2$-module is the weight-zero multiplicity minus the weight-two multiplicity, see \cite{Springer1977} for the proof.  Hence
\begin{equation}\label{eq:bigraded-dimension}
 \dim_k\cR_{a,b}
 =[z^0-z^2]\,\chi_{5,a}(z)\chi_{3,b}(z),
\end{equation}
where $[z^0-z^2]P=[z^0]P-[z^2]P$.

Summing over $a+b$, formula \cref{eq:bigraded-dimension} gives the Poincar\'e series.  Poincar\'e series of rings of simultaneous invariants of two binary forms were computed by Bedratyuk \cite{Bedratyuk2010}, and the following agrees with that computation:
\begin{equation}\label{eq:hilbert-series}
\begin{split}
 H_{\cR}(t)		=	&	\frac    1   {(1-t^4)^3(1-t^6)^2(1-t^8)^2} \, 
					 \left( 1+3t^4+5t^6+19t^8 + 29t^{10}+ 46t^{12}+ 48t^{14}       +48t^{16} \right. \\
				& \left. +46t^{18}  +29t^{20}+19t^{22}+5t^{24}+3t^{26}+t^{30} \right).
 \end{split}
\end{equation}
%
%
We use \cref{eq:hilbert-series} only as a presentation of the Hilbert series, and no homogeneous system of parameters is inferred from its denominator.

\begin{theorem}
\label{thm:dimensions}
The invariant ring $\cR_{5,3}$ has Krull dimension $7$.  The moduli space $\cM_4^1$ has dimension $6$, and its function field satisfies $\trdeg_k k(\cM_4^1)=6$.
\end{theorem}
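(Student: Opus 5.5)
We plan to obtain the Krull dimension from the dimension of a generic orbit, and the statements about \(\cM_4^1\) from the identification of \(\Rat_4^1\) with an open subset of \(\PP(V_5\oplus V_3)\).

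\textbf{Step 1: the Krull dimension.} Since \(\SL_2\) is reductive, \(\cR\) is finitely generated by \cref{thm:gordan-family}. Its Krull dimension equals \(\trdeg_k \Frac(\cR)\). By Rosenlicht's theorem combined with the reductive quotient map, this equals \(\dim(V_5\oplus V_3)\) minus the generic orbit dimension, provided the generic orbits are closed. We therefore need two facts.
\begin{enumerate}[(a)]
\item The generic stabilizer is finite. Take \(F\) with five distinct roots. Any \(\sigma\in\SL_2\) fixing \(F\) permutes these roots. Five distinct points of \(\Pone\) in general position have trivial projective stabilizer, so the stabilizer of \(F\) alone is contained in \(\{\pm1\}\) (an element acting by \(-1\) multiplies \(F\) by \((-1)^5\), so it fixes \(F\) only if it acts trivially on roots up to scalar). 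In any case it is finite, and the generic orbit has dimension \(3\).
\item Generic orbits are closed. A pair with \(F\) having distinct roots is stable for \(V_5\): by Hilbert--Mumford, no one-parameter subgroup drives it to \(0\) or fixes a limit. Adding \(G\) preserves properness of the orbit map. Concretely, if \(\lambda\) is a one-parameter subgroup whose limit exists, then the weights of \(F\) in the adapted coordinates must all be nonnegative, which forces a root of multiplicity \(\geq 3\), a contradiction.
\end{enumerate}
Hence \(\dim\cR=10-3=7\). As a cross-check, the Hilbert series \cref{eq:hilbert-series} has a pole of order \(7\) at \(t=1\): the denominator contributes \(3+2+2=7\), and the numerator at \(t=1\) equals \(1+3+5+19+29+46+48+48+46+29+19+5+3+1=302\neq0\). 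Either argument suffices, and the pole-order computation is the quickest to state.

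\textbf{Step 2: the moduli space.} By the last paragraph of \cref{sec:cg}, \(\Rat_4^1\) is identified with the \(\SL_2\)-stable open set \(\PP(V_5\oplus V_3)\setminus V(\Delta_{F,G})\), which has dimension \(9\). The conjugation action factors through \(\PGL_2\), which has dimension \(3\). Step 1(a) shows that generic stabilizers in \(\PGL_2\) are finite, since projectivizing only adds the scalars already accounted for. Hence \(\dim\cM_4^1=9-3=6\).

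Algebraically, \(k(\cM_4^1)\) is the field of degree-zero ratios of invariants, i.e.\ the degree-zero part of \(\Frac(\cR)\). Because \(\cR\) is graded, of Krull dimension \(7\), and contains the positive-degree element \(\Delta_{F,G}\), the degree-zero subfield has transcendence degree \(7-1=6\).

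\textbf{Expected obstacle.} The only delicate point is justifying that invariants separate generic orbits, so that the transcendence degree equals the orbit-space dimension rather than something smaller. We would handle this by citing the stability argument in Step 1(b), which makes the quotient geometric on the open set where \(F\) has distinct roots. Alternatively, the Hilbert-series pole-order argument sidesteps this issue for the Krull dimension entirely.
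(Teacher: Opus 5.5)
Your argument is correct. It uses the same two ingredients as the paper, but with their roles swapped.

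\textbf{How the paper argues.} It takes the Hilbert--Serre pole-order count on \cref{eq:hilbert-series} as its actual proof that $\dim\cR_{5,3}=7$. It then gets $\dim\cM_4^1=6$ from $\dim\Proj(\cR)=6$ together with the fact that the resultant-open locus is a nonempty open subset of $\Proj(\cR)$. The count $9-3=6$ appears only as a comparison.

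\textbf{How you argue.} You make the orbit count primary, and you supply what turns it into a proof rather than a heuristic. That missing piece is the Hilbert--Mumford argument that a pair whose quintic has no triple root is properly stable, which guarantees that generic fibres of the quotient map are single $3$-dimensional orbits. The paper proves a stability statement of this kind only later, in \cref{rem:stability}, and there for the resultant-open locus.

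\textbf{What your route buys.} It is independent of the computed closed form of the Hilbert series. It needs only the finiteness of the stabilizer of a quintic with at least three distinct roots and the weight computation. It therefore gives $\dim\cR_{d+1,d-1}=2d-1$ for every $d\geq2$ with no change.

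\textbf{What the paper's route buys.} It is a one-line check once \cref{eq:hilbert-series} is granted. For the function field, it goes through $\Proj(\cR)$. You go through the degree-zero subfield instead, which uses the standard fact that $\Frac(\cR)=\Frac(\cR)_0(z)$ with $z$ homogeneous and transcendental over $\Frac(\cR)_0$. This fact is exactly the argument in the proof of \cref{thm:hilbert-semigroup-reduction}(iii).

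\textbf{A harmless slip.} In Step 1(a), $-I$ sends $F$ to $-F$, so it does not stabilize $F$ at all. The $\SL_2$-stabilizer of a generic quintic is therefore trivial, not merely contained in $\{\pm I\}$. This does not affect the finiteness you need.
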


\begin{proof}
At $t=1$ the numerator of \cref{eq:hilbert-series} has value $N(1)=302\neq0$.  Its denominator has a zero of order $3+2+2=7$.  Thus $H_{\cR}(t)$ has a pole of order $7$ at $t=1$.  By the Hilbert--Serre dimension theorem this pole order is the Krull dimension of the finitely generated positively graded algebra $\cR$, and hence $\dim\cR_{5,3}=7$.

The scheme $\Proj(\cR)$ therefore has dimension $6$.  Under the Clebsch--Gordan correspondence of \cref{sec:cg}, the nonzero-resultant locus is the degree-four rational-map locus, and the standard GIT quotient identifies its quotient with $\cM_4^1$, see \cite{Silverman1998,Silverman2012} for details.  It is a nonempty open subset of $\Proj(\cR)$, so it also has dimension $6$.  Since it is irreducible, its function field has transcendence degree $6$.

For comparison, the same number follows directly from $\dim\Rat_4^1=\dim\PP^9=9$: a generic stabilizer is finite, so the generic $\PGL_2$-orbit has dimension $3$, and the quotient has dimension $9-3=6$.  Indeed a generic associated quintic has five distinct roots and hence finite projective stabilizer.  This is the usual orbit-dimension calculation underlying the rational quotient theorem \cite{Rosenlicht1956}.
\end{proof}

\begin{remark}\label{rem:dimension-versus-generators}
The two dimensions above do not count polynomial generators.  Krull dimension $7$ means that a homogeneous system of parameters has seven members.  Projectivization removes one scaling direction and leaves six absolute moduli parameters.  A generating set for the whole algebra may contain many more than seven elements.
\end{remark}

\begin{remark}\label{rem:parity}
Both $V_5$ and $V_3$ have odd order.  The central element $-I\in\SL_2$ sends $(F,G)$ to $(-F,-G)$.  Therefore an invariant of odd total degree must vanish, and every nonzero homogeneous invariant has even total degree.  This explains the parity visible in \cref{eq:bidegree-table} below, and it explains why only even total degrees occur in the verifications of this section.
\end{remark}

\Cref{app:generators} lists fifty invariants $J_{a,b}^{(r)}$, given as transvectants of the covariants of \cref{sec:covariants}.  Their bidegrees are
\begin{equation}\label{eq:bidegree-table}
\begin{array}{c|l|c}
\toprule
\text{total degree}&\text{bidegrees}&\text{number}\\
\midrule
4 &(0,4),(1,3),(2,2)^2,(3,1),(4,0)&6\\
6 &(1,5),(2,4),(3,3)^3,(4,2),(5,1)&7\\
8 &(2,6),(3,5)^3,(4,4)^3,(5,3)^3,(6,2)^3,(7,1),(8,0)&15\\
10&(3,7),(4,6),(5,5)^2,(6,4)^3,(7,3)^3,(8,2)^2,(9,1)^2&14\\
12&(9,3),(10,2),(11,1),(12,0)&4\\
14&(12,2),(13,1)&2\\
16&(15,1)&1\\
18&(18,0)&1\\
\bottomrule
\end{array}
\end{equation}
The main result of this section is that these fifty invariants generate $\cR$ and that none of them is redundant.

\begin{theorem}[Minimal generating set]\label{thm:minimal-generators}
The fifty invariants of \cref{app:generators} form a minimal homogeneous generating set of $\cR=\cR_{5,3}$.  Consequently
\(
 \beta(\cR_{5,3})=18,
\)
and the numbers of minimal generators in total degrees $4,6,8,10,12,14,16,18$ are $6,7,15,14,4,2,1,1$, distributed over bidegrees as in \cref{eq:bidegree-table}.  For every bidegree carrying a generator, \cref{tab:generator-counts} lists $\dim_k\cR_{a,b}$, $\dim_k(\cR_+^2)_{a,b}$, and the number of generators.
\end{theorem}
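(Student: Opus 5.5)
The plan is the standard "indecomposables by rank" argument in each bidegree, combined with the a priori bound of \cref{cor:gordan-reduction}. Write $\cR_+$ for the irrelevant ideal. A homogeneous family is a minimal generating set of $\cR$ exactly when, for every bidegree $(a,b)$, its members of that bidegree project to a basis of $\cR_{a,b}/(\cR_+^2)_{a,b}$. Since $\cR$ is bigraded, and both $\cR_+$ and $\cR_+^2$ are bihomogeneous, this can be checked bidegree by bidegree. The proof therefore reduces to two claims: (i) $\dim_k\cR_{a,b}-\dim_k(\cR_+^2)_{a,b}$ equals the number of the fifty invariants in bidegree $(a,b)$, for all $a+b\leq 42$; and (ii) in each bidegree the listed invariants are linearly independent modulo $(\cR_+^2)_{a,b}$.

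For (i) and (ii), $\dim_k\cR_{a,b}$ is computed from \cref{eq:bigraded-dimension} for every $a+b\leq42$ with $a+b$ even (\cref{rem:parity}). I proceed inductively in total degree $n$. Suppose the generators of degree $<n$ are known. Then $(\cR_+^2)_{a,b}$ is spanned by the products $J\cdot J'$ of previously found generators, or more economically by products of a generator with a basis of $\cR_{a',b'}$ for $(a',b')<(a,b)$. Its dimension is the rank of these products evaluated on sufficiently many random points of $V_5\oplus V_3$ with rational coordinates. Evaluation at points is exact over $\Q$, and a rank computed modulo a large prime gives a lower bound that is confirmed by the upper bound $\dim\cR_{a,b}$. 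I then adjoin the new $J^{(r)}_{a,b}$ from \cref{app:generators} and check that the rank rises to $\dim_k\cR_{a,b}$. This proves both that they complete a spanning set and that none is redundant: the rank jump equals their number. This reproduces the counts in \cref{eq:bidegree-table} and the entries of \cref{tab:generator-counts}.

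For the bidegrees with $20\leq a+b\leq 42$, I must show $\cR_{a,b}=(\cR_+^2)_{a,b}$, i.e.\ that no new generators appear there. By \cref{cor:gordan-reduction}, this completes the proof that the fifty invariants generate. Two routes are available. The first is the same rank check, that products of the fifty generators fill $\cR_{a,b}$. The second, cheaper route uses \cref{thm:gordan-degree-bound}: only Gordan transvectants $(A^\alpha,B^\gamma)_r$ from \cref{tab:gordan-patterns} of degree $>18$ need to be handled, and each has explicit bidegree. For each such member I verify by a linear solve at random points that it lies in the span of monomials in the fifty generators of that bidegree. An identity verified on more points than the dimension of the relevant space of polynomial functions is then a genuine identity. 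I would use the second route for high-degree members and the first for degrees up to $20$.

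Finally, $\beta(\cR_{5,3})=18$ follows. The generator $J_{18,0}$ (essentially $c_{18,0}$) is indecomposable by the rank step, so $\beta\geq18$, and generation in degrees $\leq18$ gives $\beta\leq18$. The main obstacle is computational scale. The dimensions of $\cR_{a,b}$ near total degree $42$ and the number of products spanning $(\cR_+^2)_{a,b}$ become large, and the evaluation of high-degree transvectants is costly. I would mitigate this by working bidegree by bidegree, evaluating covariants numerically at points rather than symbolically, and doing all linear algebra modulo a large prime. I would then certify the key independence statements over $\Q$.
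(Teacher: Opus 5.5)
Your outline matches the paper's. You reduce to total degree at most $42$ by \cref{cor:gordan-reduction}, certify spanning of $\cR_{a,b}$ by evaluation ranks checked against the character-formula dimension, and get minimality by comparing $\dim_k\cR_{a,b}$ with the rank of products of generators times recursively chosen lower-degree bases. Two steps, however, are not valid as you justify them. The first is the ``second route'' you plan to use in degrees $22$ to $42$. It rests on the claim that an identity checked at more points than the dimension of the relevant space is a genuine identity, and that is false. No number of points suffices unless the points are unisolvent for $\cR_{a,b}$, since all of them may lie on the zero set of some nonzero element. A linear solve at random points is therefore only a probabilistic check. To make it a proof you must certify that evaluation is injective on $\cR_{a,b}$, by exhibiting $\dim_k\cR_{a,b}$ invariants whose evaluation matrix is nonsingular. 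If you take these to be monomials in the fifty $J_i$, you have already shown $\cT_{a,b}=\cR_{a,b}$ for $\cT=k[J_1,\ldots,J_{50}]$, and the Gordan transvectant never needs to be evaluated. That is exactly the paper's certificate: a nonzero mod-$p$ determinant of $\dim_k\cR_{a,b}$ monomials in every nonzero bidegree with $a+b\leq30$, and in the $51$ bidegrees with $32\leq a+b\leq42$ that carry a member of \cref{eq:gordan-family}. The remaining bidegrees in that range carry no such member, so there $\cR_{a,b}=(\cR_+^2)_{a,b}$ and induction on degree applies.

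The second problem is that modular ranks are only lower bounds for ranks over $\Q$. That is enough for spanning, where the upper bound $\dim_k\cR_{a,b}$ closes the gap. It is not enough for minimality. Showing that a listed $J$ does not lie in $(\cR_+^2)_{a,b}$ needs an upper bound on the rank of the product set, and there is no a priori upper bound on $\dim_k(\cR_+^2)_{a,b}$. If the true rational rank of the products exceeded your mod-$p$ rank, some listed generator would be redundant and you could not detect it. These product ranks must be computed exactly in characteristic zero; the paper does this at $130$ integer points in rational arithmetic. Your closing remark about certifying ``independence over $\Q$'' has to mean precisely this exact computation of $\dim_k(\cR_+^2)_{a,b}$, not a further modular check.
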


\begin{table}[ht]
\caption{Bidegrees of the minimal generators of $\cR_{5,3}$.}
\label{tab:generator-counts}
\centering\small
\begin{tabular}{c c c c @{\qquad} c c c c}
\toprule
$(a,b)$ & $\dim\cR_{a,b}$ & $\dim(\cR_+^2)_{a,b}$ & gen. &
$(a,b)$ & $\dim\cR_{a,b}$ & $\dim(\cR_+^2)_{a,b}$ & gen.\\
\midrule
$(0,4)$&1&0&1 & $(4,6)$&8&7&1\\
$(1,3)$&1&0&1 & $(5,5)$&12&10&2\\
$(2,2)$&2&0&2 & $(6,4)$&10&7&3\\
$(3,1)$&1&0&1 & $(7,3)$&9&6&3\\
$(4,0)$&1&0&1 & $(8,2)$&4&2&2\\
$(1,5)$&1&0&1 & $(9,1)$&3&1&2\\
$(2,4)$&1&0&1 & $(9,3)$&15&14&1\\
$(3,3)$&3&0&3 & $(10,2)$&11&10&1\\
$(4,2)$&1&0&1 & $(11,1)$&4&3&1\\
$(5,1)$&1&0&1 & $(12,0)$&3&2&1\\
$(2,6)$&4&3&1 & $(12,2)$&9&8&1\\
$(3,5)$&6&3&3 & $(13,1)$&5&4&1\\
$(4,4)$&8&5&3 & $(15,1)$&7&6&1\\
$(5,3)$&6&3&3 & $(18,0)$&1&0&1\\
$(6,2)$&6&3&3 & & & & \\
$(7,1)$&2&1&1 & & & & \\
$(8,0)$&2&1&1 & & & & \\
$(3,7)$&7&6&1 & & & & \\
\bottomrule
\end{tabular}
\end{table}

\begin{proof}
Let $J_1,\ldots,J_{50}$ be the listed invariants and let $\cT=k[J_1,\ldots,J_{50}]\subseteq\cR$.  All objects are defined over $\Q$.

\emph{Evaluation ranks.}  Let $\cS\subseteq\cR_{a,b}$ be a finite set of invariants with coefficients in $\Q$, and let $P_1,\ldots,P_N$ be points of $V_5\oplus V_3$ with coordinates in a field $\kappa$ over which the elements of $\cS$ can be evaluated.  A linear relation among the elements of $\cS$ induces the same relation among their evaluation vectors in $\kappa^N$.  Hence the rank of the evaluation matrix is at most $\dim_k\Span(\cS)\leq\dim_k\cR_{a,b}$, and equality with $\dim_k\cR_{a,b}$, known from \cref{eq:bigraded-dimension}, proves that $\cS$ spans $\cR_{a,b}$.  If, moreover, $\cS$ spans $\cR_{a,b}$ and $\kappa$ has characteristic zero, then evaluation is injective on $\cR_{a,b}$, and the rank of the evaluation matrix of any subset of $\cR_{a,b}$ equals the dimension of its span.

\emph{Generation.}  For a bidegree $(a,b)$, let $\cM_{a,b}$ be the set of monomials in $J_1,\ldots,J_{50}$ of bidegree $(a,b)$.  The equality $\cT_{a,b}=\cR_{a,b}$ states that $\cM_{a,b}$ spans $\cR_{a,b}$.  We prove it for all $(a,b)$ with $a+b\leq42$, and \cref{cor:gordan-reduction} then gives $\cT=\cR$.  By \cref{thm:gordan-family}, $\cR_{a,b}=(\cR_+^2)_{a,b}+\Span\cG_{a,b}$, where $\cG_{a,b}$ is the set of members of \cref{eq:gordan-family} of bidegree $(a,b)$.

Two families of bidegrees are certified directly.  The first family consists of the $242$ bidegrees with $a+b\leq30$ and $\dim_k\cR_{a,b}>0$; the largest dimension among them is $\dim_k\cR_{17,13}=815$.  The second family consists of the bidegrees of the members of \cref{eq:gordan-family} of total degree between $32$ and $42$.  Every such member arises from a pattern of \cref{tab:gordan-patterns} by substituting, for each repeated order, a named covariant of \cref{eq:quintic-positive-family} on the quintic side and a member of $\cB^+$ on the cubic side.  Enumerating all substitutions in all $51$ patterns yields exactly $81$ substitutions of total degree between $32$ and $42$, and their bidegrees form a set of $51$ bidegrees; the largest dimension among them is $\dim_k\cR_{23,15}=2184$.  The second family therefore contains every bidegree with $32\leq a+b\leq42$ and $\cG_{a,b}\neq\emptyset$.

For each bidegree $(a,b)$ of either family we produced a spanning certificate: a list of $n=\dim_k\cR_{a,b}$ points of $V_5\oplus V_3$ with coordinates in $\F_p$, a list of $n$ monomials of $\cM_{a,b}$, and the determinant of the resulting $n\times n$ evaluation matrix, which is nonzero modulo $p$.  The prime is $p=65521$ for the first family and $p=251$ for the second.  Neither prime divides a denominator of the entries, so the evaluation matrix is defined over $\Z_{(p)}$ and the corresponding rational determinant is nonzero.  Hence the $n$ certified monomials are linearly independent over $\Q$, and since $n=\dim_k\cR_{a,b}$ they form a basis of $\cR_{a,b}$.  Thus $\cM_{a,b}$ spans $\cR_{a,b}$, that is, $\cT_{a,b}=\cR_{a,b}$, in every certified bidegree.  The certificate files and the verification procedure are described in \cref{app:certificates}.

In every remaining bidegree with $32\leq a+b\leq42$ one has $\cG_{a,b}=\emptyset$, so $\cR_{a,b}=(\cR_+^2)_{a,b}$.  By induction on the total degree, with the first family as base, every component of total degree smaller than $a+b$ lies in $\cT$, so every product of two positive-degree invariants of bidegree $(a,b)$ is a polynomial in the $J_i$.  Hence $\cT_{a,b}=\cR_{a,b}$ for all $(a,b)$ with $a+b\leq42$, and $\cT=\cR$.

\emph{Minimality.}  In a minimal homogeneous generating set the number of generators of bidegree $(a,b)$ is $\dim_k\cR_{a,b}-\dim_k(\cR_+^2)_{a,b}$, and a set of generators is minimal if and only if, in each bidegree, its members are linearly independent modulo $(\cR_+^2)_{a,b}$ and their number equals this difference.  All $J_i$ have total degree at most $18$.  For $(a,b)$ with $a+b\leq18$ put
\[
 \cS_{a,b}
 =\bigcup_{i\,:\,a_i+b_i<a+b} J_i\cdot\cB_{a-a_i,\,b-b_i}
 \;\cup\;\{J_j:\bideg(J_j)=(a,b)\},
 \qquad (a_i,b_i)=\bideg(J_i),
\]
where $\cB_{a',b'}$ is a subset of $\cS_{a',b'}$ of cardinality $\dim_k\cR_{a',b'}$, chosen recursively in increasing total degree so that its evaluation matrix has full rank over $\Q$.  Then $\cB_{a',b'}$ is a basis of $\cR_{a',b'}$, and since $\cT=\cR$, the union $\bigcup_{i:a_i+b_i<a+b}J_i\cB_{a-a_i,b-b_i}$ spans $(\cR_+^2)_{a,b}$.  We evaluated the sets $\cS_{a,b}$ at $N=130$ points with integer coordinates in $[-9,9]$, in exact rational arithmetic.  In every case the evaluation rank of $\cS_{a,b}$ over $\Q$ equals $\dim_k\cR_{a,b}$, so evaluation is injective on $\cR_{a,b}$, and the evaluation rank of $\bigcup_{i:a_i+b_i<a+b}J_i\cB_{a-a_i,b-b_i}$ equals $\dim_k(\cR_+^2)_{a,b}$ exactly.  The resulting values are those of \cref{tab:generator-counts}, and in each bidegree the difference equals the number of listed generators.  This proves minimality.  The last generator occurs in degree $18$, so $\beta(\cR_{5,3})=18$.
\end{proof}

\begin{remark}\label{rem:generation-rigour}
The spanning assertions in the proof are rigorous over $k$.  Neither prime divides a denominator of the evaluation entries, so the evaluation matrices are defined over $\Z_{(p)}$, and a determinant that is nonzero modulo $p$ lifts to a nonzero determinant over $\Q$.  The upper bound $\dim_k\cR_{a,b}$ comes from the character formula.  The minimality assertions were computed in characteristic zero.  The exact rational evaluations at the same $130$ points also prove the identity of \cref{app:resultant}, since evaluation is injective on $\cR_8$.
\end{remark}

\section{The null cone and Hilbert's finite-module reduction}
\label{sec-4}

The argument for pairs of binary cubics in \cite[Section~4]{2004-3} begins by finding a small collection of invariants whose common zero set is the null cone.  The same first step is available here without computation.

Let $\cN_{5,3}\subset V_5\oplus V_3$ be the null cone, namely the common zero set of all positive-degree elements of $\cR$.  If one component of a pair is zero, we use the convention that its multiplicity at every point is infinite.

\begin{theorem}
\label{thm:nullcone-53}
A pair $(F,G)\in V_5\oplus V_3$ lies in $\cN_{5,3}$ if and only if there is a point $p\in\PP^1$ with
\begin{equation}\label{eq:nullcone-condition}
  \mult_p(F)\geq3,\qquad \mult_p(G)\geq2.
\end{equation}
\end{theorem}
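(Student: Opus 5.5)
We will use the Hilbert--Mumford criterion. A pair $(F,G)$ is null if and only if some one-parameter subgroup $\lambda(t)$ of $\SL_2$ drives it to $0$ as $t\to0$. Every one-parameter subgroup is conjugate to $\lambda(t)=\diag(t,t^{-1})$, and $\cN_{5,3}$ is $\SL_2$-stable. Hence $(F,G)$ is null exactly when some $\SL_2$-translate $(F^\sigma,G^\sigma)$ has all monomials with positive weight under $\diag(t,t^{-1})$.

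On $x^{n-j}y^j$ in $V_n$ this subgroup acts with weight $n-2j$, up to a global sign convention that we fix once. The weight condition becomes a condition on the exponents. For $F\in V_5$, only monomials with $5-2j>0$, that is $j\le2$, may occur: $F\in\Span(x^5,x^4y,x^3y^2)$, so $x^3\mid F$. For $G\in V_3$, only $j\le1$ may occur: $G\in\Span(x^3,x^2y)$, so $x^2\mid G$. Thus both forms vanish at the common point $p=(0:1)$, with $\mult_p(F)\ge3$ and $\mult_p(G)\ge2$.

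Translating back via $\sigma$, $(F,G)$ is null if and only if there is a point $p$ (the image of $(0:1)$ under $\sigma^{-1}$) satisfying \cref{eq:nullcone-condition}. The convention on zero components fits this picture: a zero form has no monomials, so it imposes no condition, which matches infinite multiplicity. Both directions come at once, since the weight condition is an equivalence for a fixed torus. Conversely, given such a $p$, we move it to $(0:1)$ by some $\sigma\in\SL_2$; then $\lambda(t)$ sends the translate to $0$, so every positive-degree invariant vanishes there by continuity and homogeneity.

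The only step needing care is the precise form of the Hilbert--Mumford criterion we invoke. For a reductive group acting linearly, the null cone (the common zeros of positive-degree invariants, equivalently the points whose orbit closure contains $0$) equals the set of vectors destabilized by some one-parameter subgroup. Over $\SL_2$ all maximal tori are conjugate, which lets us reduce to the diagonal torus. We also need to track the sign convention, checking whether positive weight on $x^{n-j}y^j$ corresponds to small $j$ or large $j$. This only swaps $(0:1)$ with $(1:0)$ and does not affect the statement.

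The argument uses no invariants from \cref{app:generators}. It matches the observation in \cref{rem:nullcone-dynamical} and the cubic-pair analysis of \cite{2004-3}.
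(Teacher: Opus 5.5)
Your proposal is correct and follows essentially the same route as the paper. Both use the Hilbert--Mumford criterion, reduce to the diagonal torus, and read off $x^3\mid F$ and $x^2\mid G$; the paper instead gets $y^3\mid F$, $y^2\mid G$ and the point $(1:0)$ because of its opposite sign convention, and it notes explicitly that a general one-parameter subgroup is conjugate to a positive power of the diagonal one, which rescales all weights by a positive integer and so leaves your positivity condition unchanged.
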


\begin{proof}
By the Hilbert--Mumford criterion, $(F,G)$ is null if and only if there is a one-parameter subgroup of $\SL_2$ which sends it to the origin.  After conjugating that subgroup, take
\(
 \lambda(t)=\begin{pmatrix}t^{-1}&0\\0&t\end{pmatrix}.
\)
Suppose first that \cref{eq:nullcone-condition} holds.  Move $p$ to $(1:0)$.  Then $y^3\mid F$ and $y^2\mid G$.  With the substitution convention of \cref{sec:cg}, the subgroup $\lambda$ sends the monomial $x^{n-i}y^i\in V_n$ to $t^{2i-n}x^{n-i}y^i$, and we call $2i-n$ its weight.  Every monomial occurring in $F$ therefore has positive weight $2i-5\geq1$, and every monomial occurring in $G$ has positive weight $2i-3\geq1$.  Hence $\lambda(t)\cdot(F,G)\to(0,0)$ as $t\to0$.

Conversely, if a one-parameter subgroup sends $(F,G)$ to the origin, conjugate it to a positive power of $\lambda$.  All monomials occurring in both components must then have positive weight.  Thus the least exponent of $y$ in $F$ is at least $3$, and the least exponent of $y$ in $G$ is at least $2$.  The point which was moved to $(1:0)$ satisfies \cref{eq:nullcone-condition}.  The Hilbert--Mumford criterion now identifies this unstable locus with the null cone.
\end{proof}

The forward implication below is Hilbert's theorem, in the form used for binary sextics and for $V_3\oplus V_3$ in \cite[Theorem~2.7]{2004-3}, see that reference for the proof.  We include the converse for completeness, together with the consequence when $s=7$.

\begin{theorem}
\label{thm:hilbert-nullcone-module}
Let $Q_1,\ldots,Q_s\in\cR$ be homogeneous invariants of positive degree.  Then
\begin{equation}\label{eq:parameter-nullcone-condition}
 V(Q_1,\ldots,Q_s)=\cN_{5,3}
\end{equation}
holds if and only if $\cR$ is a finite module over $k[Q_1,\ldots,Q_s]$.  If $s=7$, either condition implies that the $Q_i$ are algebraically independent and form a homogeneous system of parameters for $\cR$.
\end{theorem}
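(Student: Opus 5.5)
The plan is to prove both directions by comparing zero sets in $V_5\oplus V_3$ with zero sets in $\Spec\cR$. Write $S=k[Q_1,\ldots,Q_s]$, let $\cR_+$ be the irrelevant ideal, and let $\pi:V_5\oplus V_3\to\Spec\cR$ be the quotient morphism. Since $\SL_2$ is reductive, $\pi$ is surjective. By definition, $\cN_{5,3}=\pi^{-1}(0)$, where $0$ denotes the point $\cR_+$.

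For the forward direction, assume $V(Q_1,\ldots,Q_s)=\cN_{5,3}$. I will show that the only point of $\Spec\cR$ at which all $Q_i$ vanish is $0$. Take a maximal ideal $\mathfrak m$ of $\cR$ containing every $Q_i$. Surjectivity of $\pi$ gives a point $v$ with $\pi(v)=\mathfrak m$. Then $Q_i(v)=0$ for all $i$, so $v\in\cN_{5,3}$ and hence $\mathfrak m=\cR_+$. By the graded Nullstellensatz, $\sqrt{(Q_1,\ldots,Q_s)\cR}=\cR_+$. In particular $\cR/(Q_1,\ldots,Q_s)\cR$ is a finitely generated graded algebra with nilpotent positive part, so it is finite-dimensional over $k$. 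The graded Nakayama lemma then shows that $\cR$ is a finitely generated $S$-module, generated by lifts of a $k$-basis of that quotient.

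For the converse, assume $\cR$ is finite over $S$. Every homogeneous $f\in\cR_+$ is then integral over $S$. Taking the homogeneous component of degree $m\deg f$ in an integral equation gives $f^m+a_1f^{m-1}+\cdots+a_m=0$ with each $a_j\in S_+$. Any point $v$ with $Q_i(v)=0$ for all $i$ kills every $a_j$, so $f(v)^m=0$. Hence $V(Q_1,\ldots,Q_s)\subseteq\cN_{5,3}$. The reverse inclusion is trivial because each $Q_i\in\cR_+$.

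For the case $s=7$, finiteness gives $\dim S=\dim\cR=7$ by \cref{thm:dimensions}, since integral extensions preserve Krull dimension. So $S$ is the image of a polynomial ring in seven variables onto a domain of dimension $7$. The kernel is therefore a prime of height $0$, namely zero, and the $Q_i$ are algebraically independent. Seven homogeneous elements generating a subring over which $\cR$ is finite form a homogeneous system of parameters by definition.

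The step I expect to be the main obstacle is the forward direction. It needs surjectivity of the quotient map from reductivity, so that the vanishing condition upstairs transfers to maximal ideals of $\cR$. It then needs the passage from the radical condition to module finiteness through the graded Nakayama lemma. The remaining steps are formal.
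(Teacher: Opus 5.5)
Your proposal is correct. It follows the paper's proof for the converse, via the homogeneous part of an integral equation with coefficients in $k[Q_1,\ldots,Q_s]_+$, and for the case $s=7$, via equality of Krull dimensions forcing the kernel of $k[T_1,\ldots,T_7]\to k[Q_1,\ldots,Q_7]$ to be zero. The only difference is the forward direction: the paper simply cites Hilbert's theorem \cite[Theorem~2.7]{2004-3}, whereas you give its standard proof (surjectivity of the quotient map, the Nullstellensatz, and the graded Nakayama lemma), and that argument is sound.
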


\begin{proof}
Module-finiteness under \cref{eq:parameter-nullcone-condition} is \cite[Theorem~2.7]{2004-3}.

For the converse, put $B=k[Q_1,\ldots,Q_s]$ and suppose $\cR$ is finite over $B$.  Every $Q_i$ lies in $\cR_+$, so $\cN_{5,3}\subseteq V(Q_1,\ldots,Q_s)$.  Let $P\in V(Q_1,\ldots,Q_s)$ and let $I\in\cR$ be homogeneous of positive degree.  Then $I$ is integral over $B$, and taking the homogeneous part of an integral equation gives
\[
 I^m+b_1I^{m-1}+\cdots+b_m=0,\qquad b_j\in B_+ .
\]
Each $b_j$ vanishes at $P$, so $I(P)^m=0$ and $I(P)=0$.  Hence every positive-degree invariant vanishes at $P$, that is, $P\in\cN_{5,3}$.

If $s=7$, module-finiteness gives
\(
 \dim k[Q_1,\ldots,Q_7]=\dim\cR=7
\)
by \cref{thm:dimensions}.  Seven generators of a $7$-dimensional domain cannot satisfy a nonzero algebraic relation, so they are algebraically independent.
\end{proof}

The displayed denominator in \cref{eq:hilbert-series} might suggest three parameters of degree $4$, two of degree $6$, and two of degree $8$.  It is essential not to infer a parameter system from that presentation of the Hilbert series.  In fact, the suggested degree pattern cannot cut out the null cone.

\begin{proposition}
\label{prop:pure-quintic-obstruction}
No seven invariants of degree pattern
\(
 4,4,4,6,6,8,8
\)
have common zero set $\cN_{5,3}$.
\end{proposition}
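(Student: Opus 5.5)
The approach is to restrict to the slice $\{G=0\}$ and reduce the claim to a statement about the invariant ring of the binary quintic alone. Suppose, for contradiction, that $Q_1,\ldots,Q_7\in\cR$ are homogeneous of degrees $4,4,4,6,6,8,8$ with $V(Q_1,\ldots,Q_7)=\cN_{5,3}$.

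\emph{Step 1: restriction to the slice.} Decompose each $Q_i$ into bihomogeneous parts; each part is again an invariant, as noted at the start of \cref{sec-3}. Setting $G=0$ kills every part of positive $G$-degree. Hence $Q_i|_{G=0}$ is the bidegree $(\deg Q_i,0)$ component of $Q_i$, and this component is an $\SL_2$-invariant of the binary quintic $F$ alone. The ring $k[V_5]^{\SL_2}=\bigoplus_a\cR_{a,0}$ is generated by $c_{4,0},c_{8,0},c_{12,0},c_{18,0}$, these being the pure invariants in Sylvester's system. Since $\cR_{6,0}=0$, the two degree-$6$ members restrict to $0$. The degree-$4$ members restrict to multiples of $c_{4,0}$, and the degree-$8$ members restrict to elements of $\Span(c_{4,0}^2,c_{8,0})$. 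Therefore all seven restrictions lie in the ideal $(c_{4,0},c_{8,0})$ of $k[V_5]$.

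\emph{Step 2: comparison of zero sets on the slice.} By \cref{thm:nullcone-53}, with the convention that $G=0$ has infinite multiplicity everywhere, $\cN_{5,3}\cap\{G=0\}$ is the set of quintics $F$ having a root of multiplicity at least $3$. This set is exactly the null cone $\cN_5$ of $V_5$, by the same Hilbert--Mumford argument applied to $F$ alone. The hypothesis $V(Q_1,\ldots,Q_7)=\cN_{5,3}$, intersected with $\{G=0\}$, gives
\[
V_{V_5}(c_{4,0},c_{8,0})\subseteq V_{V_5}(Q_1|_{G=0},\ldots,Q_7|_{G=0})=\cN_5 .
\]

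\emph{Step 3: contradiction via the analogue of \cref{thm:hilbert-nullcone-module} for $V_5$.} The argument of \cite[Theorem~2.7]{2004-3} applies verbatim to $V_5$. It shows that if $V(c_{4,0},c_{8,0})=\cN_5$ (the reverse inclusion being automatic), then $k[V_5]^{\SL_2}$ is a finite module over $k[c_{4,0},c_{8,0}]$. In that case its Krull dimension would be at most $2$. However, $k[V_5]^{\SL_2}$ has Krull dimension $\dim V_5-\dim\SL_2=6-3=3$, because a generic quintic has finite stabilizer; equivalently, $c_{4,0},c_{8,0},c_{12,0}$ are algebraically independent. This is a contradiction.

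As a cross-check by dimension count, $V(c_{4,0},c_{8,0})$ is already $4$-dimensional, which equals $\dim\cN_5$. So a naive dimension count alone does not suffice, which is why Step 3 uses the module-finiteness argument rather than counting dimensions.

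The main point needing care is Step 3: citing, or re-deriving, Hilbert's module-finiteness theorem for the single quintic and the Krull dimension $3$ of $k[V_5]^{\SL_2}$. Both are classical. The only genuinely new input is Step 1, which rests on the vanishing of $\cR_{6,0}$ and the identification of the low-degree quintic invariants.
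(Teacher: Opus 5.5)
Your proof is correct and follows essentially the same route as the paper. You restrict to $G=0$ and use $\cB_4=kI_4$, $\cB_6=0$, $\cB_8=kI_4^2\oplus kI_8$ to place all seven restrictions in the ideal generated by two quintic invariants, then conclude from $\dim\cB=3$; the only difference is the final step, where you invoke Hilbert's finite-module theorem for $V_5$ while the paper applies Krull's height theorem in $\cB$ and pulls a nonzero point of $V(I_4,I_8)\subset\Spec\cB$ back along the surjective quotient map, and these two steps are equivalent.
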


\begin{proof}
Restrict the invariants to the closed subspace $G=0$.  Its invariant ring is the binary-quintic ring $\cB=k[V_5]^{\SL_2}$.  Setting $b=0$ in the character formula \cref{eq:bigraded-dimension} gives the classical series
\begin{equation}\label{eq:binary-quintic-hilbert-series}
 H_{\cB}(t)=\frac{1+t^{18}}
 {(1-t^4)(1-t^8)(1-t^{12})}.
\end{equation}
Consequently $\dim\cB_4=1$, $\cB_6=0$, and $\dim\cB_8=2$, with $\cB_8=kI_4^2\oplus kI_8$ for a nonzero $I_4\in\cB_4$ and a suitable $I_8\in\cB_8$.

The restrictions of all three degree-$4$ invariants therefore lie in $(I_4)$, both degree-$6$ restrictions vanish, and both degree-$8$ restrictions lie in $(I_4,I_8)$.  Hence the ideal generated by all seven restrictions has height at most $2$ in the $3$-dimensional ring $\cB$.  Its zero set in $\Spec\cB$ is positive-dimensional and is not only the homogeneous origin.  Pulling back to $V_5\oplus\{0\}$ produces a non-null common zero, contradicting \cref{eq:parameter-nullcone-condition}.
\end{proof}

For a parameter system containing the degree-$8$ rational-map resultant, the closest Hilbert-compatible correction is
\begin{equation}\label{eq:corrected-parameter-degree-pattern}
 4,4,4,6,8,8,12.
\end{equation}
Indeed, multiplying \cref{eq:hilbert-series} by the corresponding seven factors gives the polynomial $(1+t^6)N(t)$.  On $G=0$, natural pure candidates of degrees $4,8,12$ are $c_{4,0}$, $\Delta_{F,G}|_{G=0}$, and $c_{12,0}$.  The middle element is a nonzero scalar multiple of the binary-quintic discriminant and has a nonzero $I_8$ component, since it is irreducible and hence is not a scalar multiple of $I_4^2$.  These three functions are algebraically independent: their coefficient Jacobian has rank $3$ at, for example, the quintic with coefficient vector $(0,-1,1,-2,-2,2)$.  Since $\cB_4=kI_4$, $\cB_8=kI_4^2\oplus kI_8$, and $\cB_{12}=kI_4^3\oplus kI_4I_8\oplus kI_{12}$, three algebraically independent elements of degrees $4,8,12$ generate $k[I_4,I_8,I_{12}]$.  Hence the restrictions of $c_{4,0}$, $\Delta_{F,G}$, and $c_{12,0}$ to $G=0$ cut out the null cone of $V_5$.

The following three mixed invariants complete them to a homogeneous system of parameters.  We use the notation $J_{a,b}^{(r)}$ of \cref{app:generators}.

\begin{theorem}
\label{thm:hsop}
Put
\[
\begin{split}
 Q_1&=D,\qquad Q_2=c_{4,0},\qquad
 Q_3=J_{1,3}^{(1)}+J_{2,2}^{(2)},\qquad
 Q_4=J_{1,5}^{(1)}+J_{3,3}^{(3)},\\
 Q_5&=\Delta_{F,G},\qquad
 Q_6=J_{3,5}^{(3)}+J_{6,2}^{(3)},\qquad
 Q_7=c_{12,0}.
\end{split}
\]
These are homogeneous invariants of degrees $4,4,4,6,8,8,12$, and
\(
 V(Q_1,\ldots,Q_7)=\cN_{5,3}.
\)
Consequently $Q_1,\ldots,Q_7$ form a homogeneous system of parameters of $\cR$, the ring $\cR$ is a free $k[Q_1,\ldots,Q_7]$-module of rank $604$, and
\[
 H_{\cR}(t)=\frac{(1+t^6)\,N(t)}
 {(1-t^4)^3(1-t^6)(1-t^8)^2(1-t^{12})}.
\]
\end{theorem}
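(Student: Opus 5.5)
The argument has two parts. The geometric part is to show $V(Q_1,\ldots,Q_7)=\cN_{5,3}$. The algebraic part then follows from \cref{thm:hilbert-nullcone-module} and Cohen--Macaulayness.

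\textbf{The inclusion $\cN_{5,3}\subseteq V(Q_1,\ldots,Q_7)$.} This is automatic, because every $Q_i$ lies in $\cR_+$.

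\textbf{The inclusion $V(Q_1,\ldots,Q_7)\subseteq\cN_{5,3}$.} Let $(F,G)$ be a common zero. We must produce a point $p$ with $\mult_p F\ge3$ and $\mult_p G\ge2$, as in \cref{thm:nullcone-53}. We split into cases according to the root structure of $G$, which is controlled by $Q_1=D$.

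\emph{Case $D=0$ with $G$ having a double root.} Move the double root to $(1:0)$, so that $G=g\,xy^2$ or $G=g\,y^3$.

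\emph{Case $G=0$.} Then $Q_3,Q_4,Q_6$ vanish identically, since they have positive $G$-degree. So $Q_2,Q_5,Q_7$ restrict to $c_{4,0}$, to a multiple of the discriminant of $F$, and to $c_{12,0}$. By the paragraph preceding the theorem these cut out the null cone of $V_5$. That null cone is the set of quintics with a root of multiplicity at least $3$, so the required point exists.

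\emph{Case $G\neq0$ with a double root at $(1:0)$.} Write $F=\sum a_i x^{5-i}y^i$. The remaining invariants $Q_2,\ldots,Q_7$ become explicit polynomials in $a_0,\ldots,a_5$ and the one or two coefficients of $G$. The goal is to show that their common vanishing forces either $a_0=a_1=a_2=0$, giving multiplicity at least $3$ at the point where $G$ has multiplicity at least $2$, or $G=0$. Two tools help here:
\begin{enumerate}[(i)]
\item the torus $\diag(t^{-1},t)$ rescales the variables, which lets us normalize one nonzero coefficient to $1$;
\item a Gröbner basis or primary decomposition over $\Q$ of the ideal generated by the restricted $Q_i$ and $D$ then gives the radical. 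We check that it equals the ideal of the null configuration.
\end{enumerate}
It is cleaner to do this computation once, globally. We fix the slice $G\in\{y^3,\;xy^2,\;xy(x-y)\}$ together with $G=0$, and use the $\SL_2$-action to reduce to these normal forms up to scalar. The Borel subgroup fixing each normal form gives one or two further normalizations of $F$. We then compute $\sqrt{(Q_1,\ldots,Q_7)}$ on each slice.

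\emph{Case $D\neq0$.} Here $G\sim xy(x-y)$ and $Q_1$ itself is nonzero, so this slice contributes no common zeros at all. That is consistent with the claim, since a cubic with three simple roots has no double root.

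So the real work is on the slices $G=y^3$ and $G=xy^2$. There we must show that $Q_3,Q_4,Q_5,Q_6,Q_2,Q_7$ have only the solutions with $y^3\mid F$. The mixed sums $J^{(1)}_{1,3}+J^{(2)}_{2,2}$ and similar were presumably chosen to make exactly this work, and I expect this elimination to be the main obstacle. The plan is a triangular solve:
\begin{enumerate}[(i)]
\item $Q_3$ is linear-quadratic in the $a_i$ with few terms on the slice, and forces $a_0=0$ or a relation between the leading coefficients;
\item feeding this into $Q_4$ and $Q_6$ forces $a_1=0$;
\item $Q_5=\Delta_{F,G}$ together with $c_{4,0}$ and $c_{12,0}$ then forces $a_2=0$.
\end{enumerate}
Any branch that escapes is closed off by a certified Gröbner computation. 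If a spurious component appears, that is the signal to recheck the choice of mixed invariants.

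\textbf{Algebraic consequences.} With the null-cone equality established, \cref{thm:hilbert-nullcone-module} with $s=7$ shows that the $Q_i$ form an hsop. Since $\SL_2$ is reductive, Hochster--Roberts makes $\cR$ Cohen--Macaulay, so $\cR$ is free over $k[Q_1,\ldots,Q_7]$. The Hilbert series is then $P(t)/\prod(1-t^{\deg Q_i})$, where $P(t)$ counts the degrees of a free basis. Comparing with \cref{eq:hilbert-series} gives $P(t)=(1+t^6)N(t)$. The rank is $P(1)=2\cdot302=604$.
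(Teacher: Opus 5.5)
Your proposal is correct and follows the paper's proof essentially step for step. The case $D\neq0$ is excluded by $Q_1$, and the case $G=0$ reduces to the binary-quintic null cone via $Q_2,Q_5,Q_7$. The double-root and triple-root normal forms of $G$ are settled by an exact Gr\"obner computation over $\Q$, and Hochster--Roberts together with the Hilbert-series comparison gives freeness of rank $(1+t^6)N(t)|_{t=1}=604$. The only difference is cosmetic: the paper phrases the slice check as the unit-ideal test $(q_1,\ldots,q_7,1-za_j)=(1)$ for $j=0,1,2$ (that is, $a_j\in\sqrt{(q_1,\ldots,q_7)}$) rather than computing a full radical, so your heuristic triangular solve is not needed.
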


\begin{proof}
Each $Q_i$ is a sum of invariants of the same total degree, so it is homogeneous.  The parameters $Q_3$, $Q_4$, and $Q_6$ are homogeneous but not bihomogeneous, and \cref{thm:no-bihomogeneous-hsop} below shows that some mixing of bidegrees is unavoidable in every homogeneous system of parameters.  Let $V=V(Q_1,\ldots,Q_7)$.  Since every $Q_i$ has positive degree, $\cN_{5,3}\subseteq V$.  The set $V$ is $\SL_2$-stable and is a cone.  Let $(F,G)\in V$.

If $G$ has three distinct roots, then $D(G)\neq0$, which contradicts $Q_1(F,G)=0$.  If $G=0$, then $Q_3,Q_4,Q_6$ vanish identically because every term has positive degree in $G$, and $Q_5(F,0)=\Res(F_y,-F_x)$ is a nonzero multiple of the discriminant of $F$.  The restrictions of $Q_2$, $Q_5$, and $Q_7$ to $G=0$ cut out the null cone of $V_5$ by \cref{eq:corrected-parameter-degree-pattern} and the discussion preceding it, so $F$ is a null quintic and $(F,0)\in\cN_{5,3}$.

It remains to treat $G$ with a repeated root.  After an $\SL_2$-substitution the repeated root is $(0:1)$ and $G=c\,x^2y$ or $G=c\,x^3$ with $c\neq0$.  After the common rescaling $(F,G)\mapsto(c^{-1}F,c^{-1}G)$, which preserves $V$, we may take $c=1$.  Write $F=\sum_{i=0}^5a_ix^iy^{5-i}$.  By \cref{thm:nullcone-53}, $(F,G)$ is null if and only if $\mult_{(0:1)}F\geq3$, that is, $a_0=a_1=a_2=0$.  The restrictions $q_i=Q_i(F,G)$ are polynomials in $a_0,\ldots,a_5$.  For instance, on $G=x^3$ one has
\begin{equation}
 q_3  =\tfrac1{25}\bigl(5a_0a_2-2a_1^2\bigr),\qquad  q_4=\tfrac1{250}\bigl(25a_0^2a_3-15a_0a_1a_2+4a_1^3\bigr).
\end{equation}
For both slices and for $j=0,1,2$ the ideal $(q_1,\ldots,q_7,\,1-za_j)\subseteq\Q[a_0,\ldots,a_5,z]$ is the unit ideal.  This was verified by Gr\"obner-basis computations over $\Q$ in Singular.  Hence $V(q_1,\ldots,q_7)\subseteq V(a_0,a_1,a_2)$, and $(F,G)\in\cN_{5,3}$.  This proves $V=\cN_{5,3}$.

By \cref{thm:hilbert-nullcone-module}, $Q_1,\ldots,Q_7$ form a homogeneous system of parameters.  The ring $\cR$ is Cohen--Macaulay by the Hochster--Roberts theorem, hence a free module over the polynomial ring $k[Q_1,\ldots,Q_7]$.  Its rank is the value at $t=1$ of $H_{\cR}(t)\prod_i(1-t^{\deg Q_i})=(1+t^6)N(t)$, namely $2\cdot302=604$.
\end{proof}

The occurrence of a non-bihomogeneous parameter is forced.  No system of parameters of $\cR$ can consist of bihomogeneous invariants, whatever its degrees.

\begin{theorem}
\label{thm:no-bihomogeneous-hsop}
Let $Q_1,\ldots,Q_s\in\cR$ be bihomogeneous invariants of positive degree with $V(Q_1,\ldots,Q_s)=\cN_{5,3}$.  Then $s\geq8$.  In particular, every homogeneous system of parameters of $\cR$ contains a member that is not bihomogeneous.
\end{theorem}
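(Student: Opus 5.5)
Suppose, to reach a contradiction, that bihomogeneous invariants $Q_1,\ldots,Q_s$ with $V(Q_1,\ldots,Q_s)=\cN_{5,3}$ exist and $s\leq7$. By \cref{thm:hilbert-nullcone-module}, $\cR$ is finite over $k[Q_1,\ldots,Q_s]$, and $\dim\cR=7$ by \cref{thm:dimensions}. Hence $s=7$ and the $Q_i$ form a homogeneous system of parameters. I will first sort the $Q_i$ by bidegree $(a_i,b_i)$ into three types: \emph{pure quintic} ($b_i=0$), \emph{pure cubic} ($a_i=0$) and \emph{mixed} ($a_i,b_i\geq1$). The plan has two parts: first bound the number of each type using the two coordinate slices, then use the bigrading to rule out every remaining configuration.

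\emph{Slices.} Restrict to $G=0$. The mixed and pure-cubic $Q_i$ vanish there. By \cref{thm:nullcone-53}, $(F,0)$ is null exactly when $F$ is a null quintic. So the pure-quintic $Q_i$ must cut out the null cone of $V_5$, and the binary-quintic ring $\cB$ of \cref{eq:binary-quintic-hilbert-series} has dimension $3$. By \cref{thm:hilbert-nullcone-module} applied to $\cB$, there are at least $3$ pure-quintic parameters. Symmetrically, on $F=0$ the pure-cubic parameters must cut out the null cone of $V_3$, and $k[V_3]^{\SL_2}=k[D]$, so there is at least one pure-cubic parameter. This leaves at most $3$ mixed parameters.

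\emph{Bigraded Hilbert series.} Since $\cR$ is Cohen--Macaulay (Hochster--Roberts) and the $Q_i$ are bihomogeneous, $\cR$ is a free module over $k[Q_1,\ldots,Q_7]$ with a bihomogeneous basis. Therefore
\[
P(u,v)=H_{\cR}(u,v)\prod_{i=1}^{7}\bigl(1-u^{a_i}v^{b_i}\bigr)
\]
is a polynomial with nonnegative coefficients, where $H_{\cR}(u,v)=\sum\dim_k\cR_{a,b}\,u^av^b$ is given by \cref{eq:bigraded-dimension}. I will compute $H_{\cR}(u,v)$ in closed form as a rational function, using a residue computation in $z$ of the product of the generating functions \cref{eq:symmetric-character}. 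Its denominator records how much of the ring lies along each ray $\{(a,b): a/b=\text{const}\}$. The test I will use is to specialize $u=t^{\alpha}$, $v=t^{\beta}$ for suitable weights $(\alpha,\beta)$, or, more sharply, to look at the pole order of $H_{\cR}$ along a ray of slopes strictly between the two axes. Three mixed factors plus the pure ones should be unable to produce the required pole behaviour. The first specialization I will try is $v=u^{\lambda}$ with $\lambda$ near the critical slope coming from the null-cone weights $3$ and $2$ of \cref{thm:nullcone-53}.

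\emph{Geometric alternative.} In parallel I will try a direct argument. The pure parameters vanish on the irreducible $7$-dimensional set
\[
X=\{(F,G): F \text{ null},\ G \text{ with a repeated root}\},
\]
and $X$ contains the $6$-dimensional null cone. The mixed parameters are invariant under the two-torus $(F,G)\mapsto(sF,tG)$. I will look for a torus-stable subvariety of $X$ on which every bihomogeneous mixed invariant of small bidegree vanishes but which is not null. The candidate is $F$ with a triple root at $p$ and $G$ with a double root at $q\neq p$, degenerating toward $q\to p$. Because of the torus action, the independent scalings can force extra common zeros.

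The main obstacle is this last step: ruling out configurations with exactly $3$ mixed parameters of unbounded bidegrees. Pure dimension counting gives only $s\geq5$, so a genuine obstruction must come either from the nonnegativity of $P(u,v)$ or from the two-torus geometry. I expect the Hilbert-series route to be decisive once the bidegrees are bounded, and I would get that bound from the degree constraints that the numerator of \cref{eq:hilbert-series} imposes after specialization.
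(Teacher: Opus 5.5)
\textbf{There is a genuine gap.} Your preliminary counts are correct and match the first two steps of the paper's argument. Restricting to $G=0$ and to $F=0$ forces at least three members of bidegree $(a,0)$ and at least one of bidegree $(0,b)$. But this gives nothing beyond the trivial bound $s\geq7$. The whole content of the theorem is that the remaining mixed members cannot number only three, and for that step you offer two plans without carrying out either.

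The Hilbert-series route is not yet an argument. You do not exhibit a coefficient of $P(u,v)$ that would be forced negative. The mixed bidegrees are unbounded, and you give no mechanism for bounding them. So you have not shown that the nonnegativity constraint fails for any configuration, let alone for all of them. The geometric route points at the right kind of locus but stops before the decisive observation.

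The missing idea is a weight argument applied to each bihomogeneous member separately. Every invariant is fixed by $\lambda(t)=\diag(t^{-1},t)$, so each of its monomials has total weight zero. Here the coefficient $a_i$ of $x^iy^{5-i}$ in $F$ has weight $5-2i$, and the coefficient $g_j$ of $x^jy^{3-j}$ in $G$ has weight $3-2j$.

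First, take the family $(y^3R,\,x^2y)$ with $R\in V_2$; this is your candidate, with $p=(1:0)$ and $q=(0:1)$. Its only nonzero coordinates are $a_0,a_1,a_2$, of weights $5,3,1$, and $g_2$, of weight $-1$. A monomial of bidegree $(a,b)$ that is nonzero there has weight at least $a-b$. Hence every bidegree-$(a,b)$ invariant with $a>b$ vanishes identically on the family, as do all pure members. The family is non-null for $R\neq0$, so Krull's height theorem on $\A^3=V_2$ forces at least three mixed members with $1\leq a\leq b$.

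Second, take the non-null point $(x^2y^3,\,x^3)$. Its only nonzero coordinates are $a_2$ and $g_3$, of weights $1$ and $-3$. So every invariant of bidegree $(a,b)$ with $a\neq3b$ vanishes there, in particular every member counted so far. A further member of bidegree $(3b,b)$ with $b\geq1$ is therefore needed.

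These four classes are disjoint, giving $s\geq1+3+3+1=8$. Without this step, or some substitute that genuinely excludes three mixed parameters, your proposal does not prove the theorem.
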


\begin{proof}
Let $\lambda(t)$ be the one-parameter subgroup used in the proof of \cref{thm:nullcone-53}.  It multiplies the coefficient $a_i$ of $x^iy^{5-i}$ in $F$ by $t^{5-2i}$ and the coefficient $g_j$ of $x^jy^{3-j}$ in $G$ by $t^{3-2j}$.  Call $5-2i$ the weight of $a_i$ and $3-2j$ the weight of $g_j$.  An invariant is fixed by $\lambda(t)$, so every monomial occurring in it has total weight zero.  A bihomogeneous invariant of bidegree $(a,b)$ with $a>0$ vanishes on $\{0\}\oplus V_3$, and one with $b>0$ vanishes on $V_5\oplus\{0\}$.

\emph{Pure cubic members.}  By \cref{thm:nullcone-53}, a pair $(0,G)$ is null if and only if $G$ has a repeated root, that is, $D(G)=0$.  Since $V(Q_1,\ldots,Q_s)\cap(\{0\}\oplus V_3)$ is the zero set of the members of bidegree $(0,b)$, at least one member has bidegree $(0,b)$.

\emph{Pure quintic members.}  Let $\cB=k[V_5]^{\SL_2}$ and let $\pi:V_5\to\Spec\cB$ be the quotient morphism, which is surjective.  By \cref{thm:nullcone-53}, a pair $(F,0)$ is null if and only if $F$ has a root of multiplicity at least $3$, that is, $F\in\pi^{-1}(\cB_+)$.  The restrictions to $G=0$ of the members of bidegree $(a,0)$ lie in $\cB$, and their common zero set in $\Spec\cB$ is the single point $\cB_+$.  Hence the ideal they generate has radical $\cB_+$, of height $3$ by \cref{eq:binary-quintic-hilbert-series}, and by Krull's height theorem there are at least three members of bidegree $(a,0)$.

\emph{Mixed members with $a\leq b$.}  Let $R=r_0x^2+r_1xy+r_2y^2\in V_2$ and consider the pairs $(y^3R,\,x^2y)$.  Such a pair is null if and only if $R=0$: at $(1:0)$ the cubic $x^2y$ has multiplicity $1$, and at $(0:1)$ the quintic $y^3R$ has multiplicity at most $2$ unless $R=0$.  Every member of bidegree $(a,0)$ vanishes on the family, because $y^3R$ has a triple root, and every member of bidegree $(0,b)$ vanishes on it, because $D(x^2y)=0$.  On the family the nonzero coefficients of $F$ are $a_0,a_1,a_2$, of weights $5,3,1$, and the nonzero coefficient of $G$ is $g_2$, of weight $-1$.  A monomial of bidegree $(a,b)$ which is nonzero on the family therefore has weight at least $a-b$, so a member with $a>b$ vanishes identically on the family.  The restrictions of the remaining members are polynomials in $r_0,r_1,r_2$ whose common zero set is the origin of $\A^3$.  Again by Krull's height theorem, at least three members have bidegree $(a,b)$ with $1\leq a\leq b$.

\emph{Mixed members with $a>b$.}  Consider $P=(x^2y^3,\,x^3)$.  It is not null: $x^2y^3$ has multiplicity $2$ at $(0:1)$ and $x^3$ has multiplicity $0$ at $(1:0)$.  The only nonzero coefficients are $a_2$, of weight $1$, and $g_3$, of weight $-3$, so a monomial of bidegree $(a,b)$ is nonzero at $P$ only if it equals $a_2^ag_3^b$, of weight $a-3b$.  Consequently a bihomogeneous invariant of bidegree $(a,b)$ vanishes at $P$ unless $a=3b$.  Every member counted so far vanishes at $P$: those of bidegree $(a,0)$ because $x^2y^3$ has a triple root, those of bidegree $(0,b)$ because $D(x^3)=0$, and those with $1\leq a\leq b$ because $a\leq b<3b$.  Since $P$ is not null, some member has bidegree $(3b,b)$ with $b\geq1$.

The four sets of members are disjoint, so $s\geq1+3+3+1=8$.  A homogeneous system of parameters has seven members and cuts out $\cN_{5,3}$ by \cref{thm:hilbert-nullcone-module}, so it cannot consist of bihomogeneous invariants.
\end{proof}

In \cref{thm:hsop} the summand $J^{(3)}_{6,2}$ of $Q_6$ has bidegree $(6,2)$.  Among $Q_1,Q_2,Q_5,Q_7$ and the six summands of $Q_3,Q_4,Q_6$, it is the only one which does not vanish at $P=(x^2y^3,x^3)$.

\begin{corollary}
\label{cor:hironaka}
Let $Q_1,\ldots,Q_7$ be as in \cref{thm:hsop} and let $B=k[Q_1,\ldots,Q_7]$.  There are homogeneous $\eta_1,\ldots,\eta_{604}\in\cR$ with
\(
 \cR=\bigoplus_{j=1}^{604}\eta_j\,B ,
\)
and the number of $\eta_j$ of degree $n$ is the coefficient of $t^n$ in $(1+t^6)N(t)$, namely
\[
\begin{split}
 &1,3,6,19,32,51,67,77,92,77,67,51,32,19,6,3,1\\
 &\text{in degrees }0,4,6,8,10,12,14,16,18,20,22,24,26,28,30,32,36 .
\end{split}
\]
\end{corollary}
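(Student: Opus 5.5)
The plan is to read the corollary off \cref{thm:hsop} using the standard graded structure theory of Cohen--Macaulay rings over a homogeneous system of parameters. Put $B=k[Q_1,\ldots,Q_7]$. By \cref{thm:hsop} the $Q_i$ form a homogeneous system of parameters, so they are algebraically independent and $B$ is a graded polynomial ring with generators in degrees $4,4,4,6,8,8,12$. By the Hochster--Roberts theorem, as already invoked in the proof of \cref{thm:hsop}, $\cR$ is Cohen--Macaulay, and therefore $\cR$ is a finitely generated graded free $B$-module.

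To produce the $\eta_j$, I will take homogeneous elements of $\cR$ whose images form a $k$-basis of the finite-dimensional graded algebra $\cR/(Q_1,\ldots,Q_7)\cR$. By the graded Nakayama lemma these elements generate $\cR$ as a $B$-module. Freeness, which comes from the Cohen--Macaulay property (the $Q_i$ form a regular sequence), then shows that they form a $B$-basis. This gives the decomposition $\cR=\bigoplus_j\eta_jB$.

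The number and the degrees of the $\eta_j$ come from a Hilbert series comparison. Freeness gives
\[
 H_\cR(t)=\frac{\sum_j t^{\deg\eta_j}}{(1-t^4)^3(1-t^6)(1-t^8)^2(1-t^{12})}.
\]
Comparing with the last display of \cref{thm:hsop} yields $\sum_j t^{\deg\eta_j}=(1+t^6)N(t)$, where $N(t)$ is the numerator in \cref{eq:hilbert-series}. In particular there are $(1+1)N(1)=604$ elements $\eta_j$.

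It remains to expand $(1+t^6)N(t)$. The coefficients of $N$ in degrees $0,4,6,\ldots,30$ are $1,3,5,19,29,46,48,48,46,29,19,5,3,0,1$. The product is obtained by adding to these the same coefficients shifted up by $6$:
\begin{itemize}
\item degree $6$: $5+1=6$;
\item degree $10$: $29+3=32$;
\item degree $12$: $46+5=51$;
\item degree $14$: $48+19=67$;
\item degree $16$: $48+29=77$;
\item degree $18$: $46+46=92$;
\item degree $20$: $29+48=77$;
\item degree $22$: $19+48=67$;
\item degree $24$: $5+46=51$;
\item degree $26$: $3+29=32$;
\item degree $30$: $1+5=6$.
\end{itemize}
Every other degree has only one contribution: degrees $0,4,8$ give $1,3,19$, and degrees $28,32,36$ come from the shifted terms $19t^{28},3t^{32},t^{36}$. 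This is exactly the stated list, and the counts sum to $604$.

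The only substantive input is the Cohen--Macaulay step, which is already supplied. The main point requiring care is this coefficient bookkeeping, in particular keeping the shift by $6$ aligned with the correct degrees of $N$.
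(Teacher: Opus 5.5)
Your proposal is correct and is essentially the paper's argument: the Hochster--Roberts theorem makes $\cR$ Cohen--Macaulay, and since $\cR$ is finite over the parameter subring $B$ it is free over $B$. The degrees of a homogeneous basis are then read off from $H_{\cR}(t)\prod_i(1-t^{\deg Q_i})=(1+t^6)N(t)$. Your graded-Nakayama construction of the $\eta_j$ and your term-by-term expansion of $(1+t^6)N(t)$ are correct, the counts summing to $604$. They only make explicit what the paper's two-line proof leaves implicit.
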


\begin{proof}
The ring $\cR$ is Cohen--Macaulay by \cite{HochsterRoberts1974} and finite over $B$ by \cref{thm:hsop}, hence free.  A homogeneous basis has generating function $H_{\cR}(t)\prod_{i}(1-t^{\deg Q_i})=(1+t^6)N(t)$.
\end{proof}

\begin{remark}
\label{rem:stability}
Let $(F,G)=\Theta_4(f_0,f_1)$ with $\Res(f_0,f_1)\neq0$.  Then $\Delta_{F,G}\neq0$ by \cref{def:modular-resultant}, so $(F,G)$ is not null, and by \cref{thm:nullcone-53} no point of $\Pone$ is a common zero of $F$ and $G$ of multiplicities $3$ and $2$.  Both $V_5$ and $V_3$ have odd order, so no monomial of $(F,G)$ has weight zero for a one-parameter subgroup.  If all occurring weights had the same sign, then replacing the subgroup by its inverse if necessary would make them all positive, and $(F,G)$ would be null.  Every one-parameter subgroup therefore has both positive and negative weights on $(F,G)$, so the orbit is closed and the stabilizer is finite.  Every degree-four rational map is thus a stable point.  This gives the geometric quotient used in \cref{thm:dimensions} without appeal to \cite{Silverman1998,Silverman2012}.
\end{remark}

\begin{example}[A degenerate pair]\label{ex:degenerate}
Let $f_0=x^4$ and $f_1=xy^3$, so that $\Res(f_0,f_1)=0$ and the pair does not define a degree-four map.  Then $F=x^2y(x-y)(x+y)$ and $G=x(4x^2+3y^2)$, so $D=-32\neq0$ and the pair is not null, but $\Delta_{F,G}=0$.  The two conditions are distinct.  The inequality $D\neq0$ defines the cubic-discriminant chart, and $\Delta_{F,G}\neq0$ defines the locus of genuine rational maps.
\end{example}

The stability of every rational map has a concrete consequence.  The fifty generators decide conjugacy.

\begin{corollary}
\label{cor:separation}
Let $\varphi$ and $\psi$ be degree-four rational maps with Clebsch--Gordan pairs $(F,G)$ and $(F',G')$, and let $J_1,\ldots,J_{50}$ be the generators of \cref{app:generators}, of degrees $d_1,\ldots,d_{50}$.  Then $\varphi$ and $\psi$ are conjugate if and only if there is $\lambda\in k^\times$ with
\[
 J_i(F',G')=\lambda^{d_i}J_i(F,G)\qquad(1\leq i\leq50),
\]
that is, if and only if $(J_1,\ldots,J_{50})$ takes the same value in the weighted projective space $\PP(d_1,\ldots,d_{50})$ at $\varphi$ and $\psi$.
\end{corollary}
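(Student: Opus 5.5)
The plan is to combine the stability statement of \cref{rem:stability} with the standard fact that a GIT quotient separates closed orbits, and then translate orbit equality into equality in weighted projective space. First I reduce conjugacy of $\varphi$ and $\psi$ to orbit equality. Conjugacy means that some $\sigma\in\PGL_2(k)$ carries $[f_0:f_1]$ to $[g_0:g_1]$ up to a common scalar. Lifting $\sigma$ to $\SL_2(k)$ and using that $\Theta_4$ intertwines \cref{eq:conjugation-action} with simultaneous substitution, this becomes: there are $\sigma\in\SL_2$ and $\mu\in k^\times$ with $(F',G')=\mu\,(F,G)^\sigma$. The sign ambiguity of the lift is harmless, since $-I$ acts by $-1$ and can be absorbed into $\mu$.

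For the forward direction, suppose $(F',G')=\mu(F,G)^\sigma$. Invariance of each $J_i$ and homogeneity of degree $d_i$ give $J_i(F',G')=\mu^{d_i}J_i(F,G)$, so we may take $\lambda=\mu$.

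For the converse, suppose $J_i(F',G')=\lambda^{d_i}J_i(F,G)$ for all $i$. Since the $J_i$ are homogeneous of degree $d_i$, the pair $(F'',G'')=\lambda(F,G)$ satisfies $J_i(F'',G'')=\lambda^{d_i}J_i(F,G)=J_i(F',G')$ for every $i$. By \cref{thm:minimal-generators} the $J_i$ generate $\cR$, so every invariant takes the same value at $(F'',G'')$ and at $(F',G')$. Both pairs lie on the locus $\Delta\neq0$: for $(F',G')$ because $\psi$ is a rational map, and for $(F'',G'')$ because $\Delta$ is homogeneous. By \cref{rem:stability} both therefore have closed $\SL_2$-orbits. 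Invoke the basic GIT fact for reductive groups: the closures of two orbits meet exactly when every invariant agrees on the two points, and each fibre of the quotient contains a unique closed orbit. Two closed orbits with the same invariant values must then coincide. Hence $(F',G')=\sigma\cdot\lambda(F,G)$ for some $\sigma\in\SL_2$, and by the first paragraph $\varphi$ and $\psi$ are conjugate.

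The weighted projective reformulation is immediate. The tuple $(J_i(F,G))$ is not identically zero because $\Delta\neq0$ and $\Delta$ is a polynomial in the $J_i$. It therefore defines a point of $\PP(d_1,\ldots,d_{50})$, and equality of points there is exactly the displayed condition.

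I expect the only delicate step to be the appeal to separation of closed orbits. It requires both orbits to be closed, which is why the stability in \cref{rem:stability} is essential: without it, non-conjugate maps whose orbit closures meet could share all invariant values. That step is taken as standard GIT, and the rest is bookkeeping of scalars.
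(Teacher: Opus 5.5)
Your proposal is correct and matches the paper's proof essentially step for step. The forward direction uses invariance and homogeneity of the $J_i$; the converse rescales by $\lambda$, uses generation by \cref{thm:minimal-generators} and closedness of both orbits from \cref{rem:stability}, and concludes by separation of closed orbits for a reductive group. Your added observation that the tuple $(J_i(F,G))$ is nonzero, because $\Delta_{F,G}$ is a polynomial in the $J_i$, is a small but welcome point that the paper leaves implicit: it is what makes the point of $\PP(d_1,\ldots,d_{50})$ well defined.
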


\begin{proof}
If $\psi=\gamma\varphi\gamma^{-1}$ with $\gamma\in\PGL_2$, then $(F',G')=\lambda\,\widetilde\gamma\cdot(F,G)$ for an $\SL_2$-lift $\widetilde\gamma$ of $\gamma$ and some $\lambda\in k^\times$, by \cref{eq:conjugation-action} and the projective ambiguity of $[f_0:f_1]$.  The displayed relation follows from $\SL_2$-invariance and homogeneity of the $J_i$.

Conversely, assume the relation.  Then $(F,G)$ and $\lambda^{-1}(F',G')$ take the same value on every $J_i$, hence on every element of $\cR$ by \cref{thm:minimal-generators}.  Both pairs have nonzero rational-map resultant, so both $\SL_2$-orbits are closed by \cref{rem:stability}.  Invariants of a reductive group separate closed orbits in characteristic zero, so the two orbits coincide: $(F',G')=\lambda\,\widetilde\gamma\cdot(F,G)$ for some $\widetilde\gamma\in\SL_2$.  By \cref{eq:cg-map}, $\psi$ is the conjugate of $\varphi$ by the image of $\widetilde\gamma$ in $\PGL_2$.
\end{proof}

The seven parameters of \cref{thm:hsop} satisfy the forward implication.  For conjugate maps one has $Q_i(F',G')=\lambda^{\deg Q_i}Q_i(F,G)$.  The converse fails.  By \cref{cor:no-parameter-generation} below, the induced map to $\PP(4,4,4,6,8,8,12)$ has degree $604$ on conjugacy classes.

\section{The absolute invariant field}
\label{sec-5}

An \emph{absolute invariant} is a degree-zero element of the homogeneous fraction field of $\cR$.  Equivalently, it is a quotient $P/Q$ of nonzero homogeneous joint invariants of the same total degree.  The function field of the projective quotient is
\begin{equation}\label{eq:absolute-field}
 k(\cM_4^1)=\Frac(\cR)_0.
\end{equation}
By \cref{thm:dimensions}, this field has transcendence degree $6$.  Thus six algebraically independent absolute invariants are necessary.  They give birational coordinates only after one also proves that they generate the field in \cref{eq:absolute-field}.  Algebraic independence alone gives only a generically finite map.  In particular, the dimension statement by itself does not imply rationality.  Rationality of $\cM_d^1$ for every $d>1$ is known, see \cite[Theorem~4.1]{Levy2011} for the proof.  Below we give a direct proof for $d=4$ adapted to the cubic slice.

The cubic component supplies a classical moving frame.  Put
\[
 G_0=xy(x-y),
 \qquad
 \Gamma=\Stab_{\PGL_2}\{0,1,\infty\}\simeq S_3.
\]
The affine subspace
\begin{equation}\label{eq:cubic-normalized-slice}
 \cS=\{[F:G_0]:F\in V_5\}\simeq V_5
\end{equation}
will be called the \emph{cubic-normalized slice}.  Thus ``slice'' means a cross-section obtained by fixing the cubic component, and not a cubic hypersurface.

\begin{theorem}
\label{thm:cubic-slice}
Let $U\subset\PP(V_5\oplus V_3)$ be the open set on which $G$ has three distinct roots.  Then the rational quotient of $U$ by $\PGL_2$ is birational to
$V_5/\Gamma\simeq\A^6/S_3$.
More precisely, if $F=\sum_{i=0}^{5}a_i x^i y^{5-i}$ on the slice $G=G_0$, then
\begin{equation}\label{eq:slice-field}
 k(\cM_4^1)=k(a_0,\ldots,a_5)^{\Gamma}.
\end{equation}
The equality is understood on the dense open subset where both the cubic discriminant and the rational-map resultant are nonzero.
\end{theorem}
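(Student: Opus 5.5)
The plan is to prove \cref{eq:slice-field} by the classical slice (moving-frame) argument: first show that $\cS$ meets every generic $\PGL_2$-orbit of $U$ in exactly one $\Gamma$-orbit, and then transport rational functions through the resulting dominant map.

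\emph{Reduction to the conjugation quotient.} The locus $\Delta_{F,G}\neq0$ is a dense open subset of $\PP(V_5\oplus V_3)$. By \cref{rem:stability}, every point of it is stable. Hence the geometric quotient of this open set is an open subset of $\Proj(\cR)$, and its function field is $\Frac(\cR)_0$. By Rosenlicht's theorem this field is also $k(\PP(V_5\oplus V_3))^{\PGL_2}$, the field of $\PGL_2$-invariant rational functions on the projective space; here $\PGL_2$ acts through $\SL_2$, since $-I$ acts trivially on the projectivization. Intersecting with $D\neq0$ does not change any function field. It therefore suffices to identify $k(U)^{\PGL_2}$ with $k(a_0,\ldots,a_5)^\Gamma$.

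\emph{Orbits meet the slice in $\Gamma$-orbits.} Given $[F:G]\in U$, choose $\gamma\in\PGL_2$ sending the three roots of $G$ to $0,1,\infty$. Then $G^{\widetilde\gamma}=cG_0$ with $c\neq0$ for any lift $\widetilde\gamma\in\SL_2$, so $[F^{\widetilde\gamma}:G^{\widetilde\gamma}]=[c^{-1}F^{\widetilde\gamma}:G_0]\in\cS$. Conversely, suppose $\gamma\in\PGL_2$ maps a point of $\cS$ to a point of $\cS$. Then $\gamma$ preserves the root set $\{0,1,\infty\}$ of $G_0$, so $\gamma\in\Gamma$. Each $\sigma\in\Gamma$ with lift $\widetilde\sigma$ satisfies $G_0^{\widetilde\sigma}=\chi(\sigma)G_0$ for a scalar $\chi(\sigma)$. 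Hence $\Gamma$ acts on $\cS\simeq V_5$ linearly by $F\mapsto\chi(\sigma)^{-1}F^{\widetilde\sigma}$. This is well defined because changing the lift $\widetilde\sigma$ to $-\widetilde\sigma$ changes both $F^{\widetilde\sigma}$ and $\chi(\sigma)$ by the sign $-1$, since both forms have odd order. Consequently the morphism $\mu:\PGL_2\times\cS\to U$, $(\gamma,s)\mapsto\gamma\cdot s$, is surjective. Its fibres are exactly the $\Gamma$-orbits for the action $\sigma\cdot(\gamma,s)=(\gamma\sigma^{-1},\sigma\cdot s)$, and this action is free because it is free on the $\PGL_2$ factor.

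\emph{Identification of fields.} Restriction to $\cS$ gives a map $\rho:k(U)^{\PGL_2}\to k(\cS)^\Gamma$. The map $\rho$ lands in $\Gamma$-invariants because $\Gamma$ acts through elements of $\PGL_2$. It is injective because $\PGL_2\cdot\cS=U$, so a $\PGL_2$-invariant function vanishing on $\cS$ vanishes on a dense set.

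Surjectivity of $\rho$ is the main obstacle, because the normalizing frame $\gamma$ depends on an ordering of the roots of $G$, and these roots are not rational functions of $(F,G)$. To handle this, pass to the $S_3$-Galois cover $U'\to U$ of pairs together with an ordering $(p_1,p_2,p_3)$ of the roots of $G$. On $U'$ the cross-ratio normalization $\gamma$ sending $(p_1,p_2,p_3)\mapsto(0,1,\infty)$ is a rational function of the data. The normalized quintic $c^{-1}F^{\widetilde\gamma}$ is therefore a rational map $N:U'\to\cS$, well defined up to the sign ambiguity of the lift, which cancels as above. Reordering the roots by $\tau\in S_3$ replaces $N$ by $\sigma_\tau\cdot N$ for the corresponding $\sigma_\tau\in\Gamma$.

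Now let $f\in k(\cS)^\Gamma$. Then $f\circ N$ is an $S_3$-invariant rational function on $U'$, so it descends to $k(U)$. It is $\PGL_2$-invariant because $N(\gamma\cdot u')=N(u')$, and its restriction to $\cS$ is $f$. This proves $\rho$ is an isomorphism, giving \cref{eq:slice-field}.

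\emph{Dimension check and the statement about $\A^6/S_3$.} The identification $V_5/\Gamma\simeq\A^6/S_3$ is just $\cS\simeq V_5$ with the linear $\Gamma$-action described above. Finally, I would compare dimensions: $\dim\cS=6$ and $\Gamma$ is finite, which is consistent with \cref{thm:dimensions}.
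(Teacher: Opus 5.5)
Your proposal is correct and uses the same moving-frame argument as the paper. Every generic orbit meets the slice $G=G_0$, two slice points lie in one orbit exactly when they differ by an element of $\Gamma$, and $\Gamma$ acts linearly via the scalar correction, which does not depend on the sign of the lift. Where the paper cites the ``standard slice argument'' for the equality of function fields, you construct the inverse of restriction explicitly through the $S_3$-cover of orderings of the roots of $G$, which supplies the surjectivity step the paper leaves implicit.
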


\begin{proof}
The group $\PGL_2$ is triply transitive on ordered triples of distinct points of $\PP^1$.  Hence every cubic with three distinct roots can be sent to a scalar multiple $cG_0$.  Because $(F,G)$ is a projective pair, common rescaling replaces the transformed pair by $(c^{-1}F',G_0)$.  Thus every generic orbit meets the affine slice $G=G_0$.

Two points of this slice lie in the same generic $\PGL_2$-orbit precisely when the corresponding changes of variable differ by an element preserving the unordered triple $\{0,1,\infty\}$.  This residual group is the anharmonic group $\Gamma\simeq S_3$.  If $\gamma\in\Gamma$ and an $\SL_2$-lift satisfies $\gamma\cdot G_0=\lambda_\gamma G_0$, the induced linear action on the slice is
\begin{equation}\label{eq:residual-action}
  F\longmapsto \lambda_\gamma^{-1}(\gamma\cdot F).
\end{equation}
Changing the lift multiplies numerator and denominator by the same sign, so \cref{eq:residual-action} is well defined.  This proves the birational quotient statement and the equality of function fields.  Removing the resultant-zero hypersurface does not change the function field.  This is the standard slice or moving-frame argument, see \cite{Dolgachev2003,Springer1977} for details.
\end{proof}

Choose generators of the anharmonic group
\begin{equation}\label{eq:anharmonic-generators}
  r(z)=\frac{1}{1-z},\qquad s(z)=1-z,
  \qquad r^3=s^2=1,\quad srs=r^{-1}.
\end{equation}
We use the left substitution convention $(\gamma\cdot F)(v)=F(\widetilde\gamma^{-1}v)$ for an $\SL_2$-lift $\widetilde\gamma$.  Matrices representing $r$ and $s$ are
\[
 R=\begin{pmatrix}0&1\\-1&1\end{pmatrix},\qquad
 S=\begin{pmatrix}-1&1\\0&1\end{pmatrix}.
\]
For $s$ one may take $\sqrt{-1}\,S$ as an $\SL_2$-lift.  The scalar correction in \cref{eq:residual-action} gives
\[
 \rho(r)F(x,y)=-F(x-y,x),\qquad
 \rho(s)F(x,y)=-F(-x+y,y).
\]
Let $\rho$ denote the residual representation in \cref{eq:residual-action}, and let $\ell\in V_5^\vee$ be evaluation at the point $3$, that is, $\ell(F)=F(3,1)$.  For $g\in\Gamma$ set
\begin{equation}\label{eq:regular-coordinates}
  z_g(F)=\ell\bigl(\rho(g^{-1})F\bigr).
\end{equation}
\begin{lemma}[Regularity of the slice representation]
\label{lem:regular-slice}
The six functions $z_g$, $g\in\Gamma$, are linear coordinates on $V_5$.  In these coordinates $\rho$ is the regular representation of $\Gamma$.  More precisely,
\begin{equation}\label{eq:regular-permutation}
  z_g\bigl(\rho(h)F\bigr)=z_{h^{-1}g}(F)
  \qquad(g,h\in\Gamma).
\end{equation}
\end{lemma}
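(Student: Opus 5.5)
The proof splits into two parts. The first is formal: once $\rho$ is a genuine representation, the permutation identity \cref{eq:regular-permutation} follows. The second is a concrete check that the six functionals $z_g$ are linearly independent.

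\emph{Step 1: $\rho$ is a homomorphism $\Gamma\to\mathrm{GL}(V_5)$.} I will verify the defining relations of \cref{eq:anharmonic-generators} directly on the explicit formulas $\rho(r)F(x,y)=-F(x-y,x)$ and $\rho(s)F(x,y)=-F(-x+y,y)$. The computation $\rho(r)^3F=-F(-x,-y)=F$ uses that $F$ has odd degree $5$. Next, $\rho(s)^2=\mathrm{id}$ holds because the substitution $(x,y)\mapsto(-x+y,y)$ is an involution and the signs square to one. Finally, $\rho(s)\rho(r)\rho(s)=\rho(r)^{-1}$ is checked the same way.

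This step is where the scalar correction $\lambda_\gamma^{-1}$ matters. One should confirm that each chosen matrix multiplies $G_0$ by the scalar producing the displayed sign. A plain projective action would only give a projective representation, so the check is needed; the odd degree is what makes the signs consistent.

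\emph{Step 2: the permutation identity.} Given Step 1, the identity is formal:
\[
 z_g(\rho(h)F)=\ell\bigl(\rho(g^{-1})\rho(h)F\bigr)=\ell\bigl(\rho((h^{-1}g)^{-1})F\bigr)=z_{h^{-1}g}(F).
\]
So $\Gamma$ permutes the six functions $\{z_g\}$ exactly as in the left regular action.

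\emph{Step 3: the $z_g$ are linear coordinates.} Since $\dim V_5=6=|\Gamma|$, it suffices to show that the six functionals $F\mapsto z_g(F)$ are linearly independent. Up to the signs coming from $\rho$, $z_g(F)$ is the value of $F$ at the point $\tilde g^{-1}\cdot(3,1)$ for a suitable lift $\tilde g$. The orbit of $3$ under the anharmonic group is
\[
 \{3,\ \tfrac{1}{1-3}=-\tfrac12,\ 1-3=-2,\ \tfrac13,\ \tfrac{2}{3},\ \tfrac{3}{2}\},
\]
which consists of six distinct points of $\PP^1$; this is the key point, since $3$ is not a special point of the anharmonic group.

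Evaluations of binary quintics at six distinct points of $\PP^1$, at any chosen representative vectors, are linearly independent. The reason is that the evaluation map $V_5\to k^6$ is an isomorphism: a nonzero quintic cannot vanish at six distinct points. Nonzero scalar factors from the lifts and signs do not affect independence. Hence the $z_g$ form a basis of $V_5^\vee$, and in this basis \cref{eq:regular-permutation} exhibits $\rho$ as the regular representation.

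\emph{Main obstacle.} The only real difficulty is the sign bookkeeping in Step 1, namely checking that the displayed $\rho(r)$ and $\rho(s)$ genuinely satisfy $r^3=s^2=1$ and $srs=r^{-1}$ with the left-substitution convention. I would do this by hand on the monomial $x^iy^{5-i}$, or by direct $6\times6$ matrix multiplication. If needed, a cross-check is to compute the explicit $6\times 6$ evaluation matrix and its determinant.
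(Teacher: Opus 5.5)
Your proposal is correct and is essentially the paper's proof. The six points of the orbit of $3$ are distinct, so the $z_g$ are nonzero multiples of independent evaluation functionals, and \cref{eq:regular-permutation} follows formally from \cref{eq:regular-coordinates}; your Step 1 just makes explicit the homomorphism property, which the paper gets from \cref{eq:residual-action} because lifts compose and $\lambda_\gamma$ is multiplicative. One harmless slip: up to a scalar, $z_g$ is evaluation at $\tilde g\cdot(3,1)$ rather than at $\tilde g^{-1}\cdot(3,1)$, but both run over the same six-point orbit.
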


\begin{proof}
The orbit of $3$ under the transformations in \cref{eq:anharmonic-generators} is
$3$, $-2$, $\tfrac13$, $-\tfrac12$, $\tfrac32$, $\tfrac23$.
Consequently, the $z_g$ are nonzero scalar multiples of evaluation at six distinct points of $\PP^1$.  A binary quintic vanishing at all six points is zero, so these evaluation functionals form a basis of $V_5^\vee$.  Equation~\cref{eq:regular-permutation} follows immediately from \cref{eq:regular-coordinates}, and is the regular permutation action.
\end{proof}

Write all indices modulo $3$ and define
\begin{equation}\label{eq:regular-splitting}
  A_i=z_{r^i},\qquad B_i=z_{r^is},\qquad
  X_i=A_i+B_i,\qquad E_i=A_i-B_i.
\end{equation}
In terms of evaluations of $F$, this convention gives the completely explicit formulas
\begin{align}
 (A_0,A_1,A_2)
  &=\bigl(F(3,1),\ F(-1,2),\ -F(2,3)\bigr),\notag\\
 (B_0,B_1,B_2)
  &=\bigl(-F(-2,1),\ -F(-1,-3),\ F(-3,-2)\bigr).
 \label{eq:regular-evaluations}
\end{align}
Put
\begin{equation}\label{eq:xyeta-coordinates}
  t=X_0+X_1+X_2,\qquad x_i=X_i-\frac{t}{3},
  \qquad \eta=E_0+E_1+E_2,
\end{equation}
and
\begin{equation}\label{eq:y-coordinates}
  (y_0,y_1,y_2)=(E_1-E_2,\ E_2-E_0,\ E_0-E_1).
\end{equation}
Thus $\sum x_i=\sum y_i=0$.  The triples $(x_i)$ and $(y_i)$ are permuted simultaneously by $\Gamma$, whereas $\eta$ transforms by the sign character.  The Vandermonde
\begin{equation}\label{eq:slice-vandermonde}
  \delta=(x_0-x_1)(x_1-x_2)(x_2-x_0)
\end{equation}
also transforms by the sign character.

\begin{theorem}
\label{thm:slice-generators}
On $\cS\simeq V_5$ define
\begin{equation}\label{eq:explicit-six-invariants}
\begin{gathered}
 u_1=t,\qquad u_2=\sum_i x_i^2,\qquad u_3=x_0x_1x_2,\qquad u_4=\sum_i x_iy_i,\\
 u_5=\sum_i x_i^2y_i,\qquad u_6=\eta\,(x_0-x_1)(x_1-x_2)(x_2-x_0).
\end{gathered}
\end{equation}
Then
\begin{equation}\label{eq:six-field-generators}
  k(\cM_4^1)=k(V_5)^\Gamma=k(u_1,\ldots,u_6).
\end{equation}
In particular, the $u_i$ are algebraically independent $\Gamma$-invariant rational coordinates on the normalized slice quotient, and $\cM_4^1$ is rational.
\end{theorem}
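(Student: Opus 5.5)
The proof combines a degree count with the Galois correspondence. Put $K=k(V_5)=k(a_0,\ldots,a_5)$ and $L=k(u_1,\ldots,u_6)$. The first equality in \cref{eq:six-field-generators} is \cref{thm:cubic-slice}, so only $k(V_5)^\Gamma=L$ needs proof. Since $\rho$ is faithful (by \cref{lem:regular-slice} it is the regular representation), Artin's theorem gives $[K:K^\Gamma]=6$. The plan has two steps:
\begin{enumerate}[(i)]
\item show $L\subseteq K^\Gamma$, i.e.\ each $u_i$ is $\Gamma$-invariant;
\item show $[K:L]\leq 6$.
\end{enumerate}
Then $L\subseteq K^\Gamma\subseteq K$ with $[K:L]\leq6=[K:K^\Gamma]$ forces $L=K^\Gamma$. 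Algebraic independence of the $u_i$ then follows from $\trdeg_k K^\Gamma=6$, and rationality of $\cM_4^1$ is immediate.

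For (i), I would use \cref{eq:regular-permutation}, which describes how $\Gamma$ permutes the $z_g$. From it I would read off the action on $A_i,B_i$, and hence on $X_i,E_i,t,x_i,y_i,\eta$. Left multiplication by $r$ shifts the index $i$ cyclically on both $A$ and $B$. So $t$ and $\eta$ are fixed, and $(x_i)$, $(y_i)$ are cyclically permuted together. Left multiplication by $s$ exchanges the $A$'s and $B$'s with an index reversal, since $sr^i=r^{-i}s$. Hence $X_i\mapsto X_{-i}$ and $E_i\mapsto -E_{-i}$. Consequently $\eta\mapsto-\eta$. For $y$, the index reversal gives one sign and $E\mapsto -E$ gives another, so $y_i\mapsto y_{-i}$: the $y_i$ are permuted exactly as the $x_i$, with no sign. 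The Vandermonde $\delta$ changes sign under a transposition. Granting this, $u_1,u_2,u_3$ are symmetric in the $x_i$, $u_4,u_5$ are symmetric under simultaneous permutation of $(x_i,y_i)$, and $u_6=\eta\delta$ is a product of two sign characters. I expect this sign bookkeeping, especially for the $y_i$, to be the main obstacle. I would check it directly on the two generators $r,s$ using \cref{eq:regular-evaluations}.

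For (ii), I would show $K=L(x_0,x_1)$; this suffices for the degree bound.
\begin{enumerate}[(a)]
\item Since $\sum x_i=0$, the $x_i$ are the roots of $T^3-\tfrac{u_2}{2}T-u_3\in L[T]$. Hence $[L(x_0):L]\leq3$, and $x_1$ is a root of a quadratic over $L(x_0)$. So $[L(x_0,x_1):L]\leq6$, and $x_2=-x_0-x_1$.
\item Generically $\delta\neq0$ (the $X_i$ are independent linear forms), so $\eta=u_6/\delta\in L(x_0,x_1)$.
\item The three linear equations $\sum y_i=0$, $\sum x_iy_i=u_4$, $\sum x_i^2y_i=u_5$ have Vandermonde coefficient matrix with determinant $\pm\delta\neq0$. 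Cramer's rule then puts $y_0,y_1,y_2$ in $L(x_0,x_1)$.
\item In characteristic zero, the $E_i$ are recovered linearly from $\eta$ and the differences $y_i$: for example $3E_0=\eta+y_1-y_2$, after checking signs against \cref{eq:y-coordinates}. Likewise $X_i=x_i+u_1/3$.
\item From these, $A_i=(X_i+E_i)/2$ and $B_i=(X_i-E_i)/2$. By \cref{lem:regular-slice} the six $z_g$ are linear coordinates on $V_5$, so $K=L(x_0,x_1)$.
\end{enumerate}
This gives $[K:L]\leq6$ and completes the argument.
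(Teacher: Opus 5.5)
Your proposal is correct and follows essentially the same route as the paper: invariance of the $u_i$, then recovery of all regular coordinates from an ordering of the roots of $T^3-\tfrac{u_2}{2}T-u_3$ via the Vandermonde system and $\eta=u_6/\delta$, giving $[K:L]\le6$ (your $K=L(x_0,x_1)$ is a tidier way to state the paper's ``at most $3!$ orderings'' bound), and finally Artin's theorem. One small sign correction: \cref{eq:y-coordinates} gives $3E_0=\eta-y_1+y_2$, and in general $3E_i=\eta+y_{i+2}-y_{i+1}$; this does not affect the argument.
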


\begin{proof}
The transformation rules preceding the theorem show first that every $u_i$ is $\Gamma$-invariant.  Let
$K=k(A_0,A_1,A_2,B_0,B_1,B_2)$ and $L=k(u_1,\ldots,u_6)$.
The $x_i$ are the roots of
\begin{equation}\label{eq:x-cubic}
  T^3-\frac{u_2}{2}T-u_3.
\end{equation}
After an ordering of these roots has been chosen, and on the dense open set $\delta\neq0$, the three $y_i$ are recovered uniquely from
\begin{equation}\label{eq:y-reconstruction}
  \sum_i y_i=0,\qquad
  \sum_i x_i y_i=u_4,\qquad
  \sum_i x_i^2y_i=u_5.
\end{equation}
Indeed, the coefficient determinant is the nonzero Vandermonde $\delta$, up to sign.  Next $\eta=u_6/\delta$.  Equations \cref{eq:xyeta-coordinates} and \cref{eq:y-coordinates} then recover all $X_i$ and $E_i$, and
$A_i=(X_i+E_i)/2$ and $B_i=(X_i-E_i)/2$.
Thus an ordering of the three roots of \cref{eq:x-cubic} recovers the six regular coordinates, and there are at most $3!=6$ such orderings, so $[K:L]\leq6$.

By \cref{lem:regular-slice}, $\Gamma$ acts faithfully on $K$.  Artin's theorem on fixed fields gives $[K:K^\Gamma]=|\Gamma|=6$.  Since $L\subseteq K^\Gamma$, the two degree statements force $L=K^\Gamma$.  Combining this with \cref{thm:cubic-slice} proves \cref{eq:six-field-generators}.  Finally, the left side has transcendence degree $6$ by \cref{thm:dimensions}, and hence the six displayed generators are algebraically independent.
\end{proof}

We now carry out the classical descent from the normalized slice.  The construction of the numerators is dictated by bidegree.  The functions $u_i$ in \cref{eq:explicit-six-invariants} are homogeneous in $F$ of degrees
\(
 (d_1,\ldots,d_6)=(1,2,3,2,3,4).
\)
Since $D$ has bidegree $(0,4)$ and restricts to a nonzero constant on $\cS$, a quotient $P/D^m$ can restrict to $u_i$ only when $P$ has bidegree
\begin{equation}\label{eq:lift-bidegree}
 (d_i,4m-d_i).
\end{equation}
For fixed $i$ and $m$, this turns the descent into finite linear algebra: list the monomials in the transvectants of \cref{app:generators} having bidegree \cref{eq:lift-bidegree}, restrict them to $G=G_0$, and compare their coefficients with those of $D(G_0)^m u_i$.  Increasing $m$ until this linear system is solvable gives
\begin{equation}\label{eq:first-lift-degrees}
\begin{array}{c|cccccc}
 i&1&2&3&4&5&6\\ \hline
 d_i&1&2&3&2&3&4\\
 m_i&1&3&4&3&4&6\\
 \bideg(P_i)&(1,3)&(2,10)&(3,13)&(2,10)&(3,13)&(4,20).
\end{array}
\end{equation}
The constants $c_i$ below clear the rational coefficients produced by these linear systems.  Thus the large integers in the formulas record the fixed evaluation point and transvectant normalization, and they are not the result of a search through all ring generators.

Use the following abbreviations for members of the transvectant system in \cref{app:generators}:
\begin{equation}\label{eq:absolute-shorthand}
\begin{gathered}
 A=J_{1,3}^{(1)},\quad L=J_{1,5}^{(1)},\quad
 B_1=J_{2,2}^{(1)},\quad B_2=J_{2,2}^{(2)},\quad
 M=J_{2,4}^{(1)},\quad N=J_{2,6}^{(1)},\\
 R=J_{3,1}^{(1)},\qquad
 S_j=J_{3,3}^{(j)},\qquad T_j=J_{3,5}^{(j)}\quad(1\leq j\leq3),
 \qquad W=J_{3,7}^{(1)}.
\end{gathered}
\end{equation}
Define the following homogeneous joint invariants, whose integer contents $2^2\cdot5^2\cdot7$, $5^3$, $5^5$, $5$, $5^3$, and $5^6$ are displayed as factors:
\begin{small}
\begin{equation}
 \label{eq:absolute-Pi}
\begin{split}
 P_1& =700A,\\
 P_2 & =5^3\bigl(98010L^2+190080DN+47045DA^2+97000D^2B_2-19224D^2B_1\bigr),    \\
 P_3 & =5^5\,  (63868314510AL^2+49446437040DLM+148589040600DAN	\\
      &+31976059115DA^3+560560D^2T_3-164989440D^2T_2   -45305310960D^2T_1 \\
      &+98893534740D^2AB_2-39968032104D^2AB_1+7720750752D^3R   ), \\
 P_4 & =-5\bigl(632610L^2+803520DN+165385DA^2+552680D^2B_2-98424D^2B_1\bigr),\\
 P_5 & =-5^3\, (216453026790AL^2+514942187760DLM+493336803960DAN \\
 	& +112492234855DA^3 	+83523440D^2T_3-16672803840D^2T_2 -290479054320D^2T_1\\
	&	+491566054980D^2AB_2 	-218716874376D^2AB_1-12670582496D^3R  ),\\
 P_6 & =-5^6\, (67988850930L^4+197785748160DL^2N +33952803885DA^2L^2  \\
      & -560560D^2LW+79622318160D^2L^2B_2   -39182280984D^2L^2B_1 \\
      & +19394519760D^2ALM  -82457760D^3LS_3+21320456220D^3LS_2 \\
      & +9857393568D^3LS_1).
\end{split}
\end{equation}
\end{small}

\begin{theorem}
\label{thm:explicit-absolute-invariants}
Put $(c_1,\ldots,c_6)=(3,54,20432412,18,2270268,27027)$ and $(m_1,\ldots,m_6)=(1,3,4,3,4,6)$.  On the chart $D\neq0$ set, for $i=1,\ldots,6$,
\begin{equation}\label{eq:absolute-U}
 U_i(F,G)=\frac{1}{c_i}\cdot\frac{P_i(F,G)}{D(F,G)^{m_i}}.
\end{equation}
Then the $U_i$ are absolute $\SL_2$-invariants and
\begin{equation}\label{eq:absolute-final-field}
 k(\cM_4^1)=k(U_1,\ldots,U_6).
\end{equation}
In particular, $U_1,\ldots,U_6$ are algebraically independent and give explicit birational coordinates on $\cM_4^1$.
\end{theorem}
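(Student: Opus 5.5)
The plan is to reduce \cref{thm:explicit-absolute-invariants} to \cref{thm:slice-generators} by showing that each $U_i$ restricts on the cubic-normalized slice $\cS$ to the function $u_i$. Once this is known, birationality follows formally.

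\emph{Step 1: invariance.} Each $P_i$ is a polynomial in joint invariants from \cref{app:generators} and in $D$. We check from \cref{eq:absolute-shorthand} that every monomial of $P_i$ has bidegree $(d_i,4m_i-d_i)$ as in \cref{eq:first-lift-degrees}. For example, $L^2$, $DN$, $DA^2$, $D^2B_j$ all have bidegree $(2,10)$. Hence $P_i$ is a bihomogeneous $\SL_2$-invariant of total degree $4m_i$, equal to that of $D^{m_i}$. So $U_i$ is a degree-zero element of $\Frac(\cR)$, that is, an absolute invariant, and it is regular on $D\neq0$.

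\emph{Step 2: restriction to the slice.} Substitute $G=G_0=xy(x-y)$ and $F=\sum a_ix^iy^{5-i}$. We compute the restrictions of the finitely many building blocks $A,L,B_1,B_2,M,N,R,S_j,T_j,W$ and $D(G_0)$ as explicit polynomials in $a_0,\dots,a_5$. On the other side, we express $u_1,\dots,u_6$ in the $a_j$ using \cref{eq:regular-evaluations,eq:xyeta-coordinates,eq:y-coordinates,eq:explicit-six-invariants}. The claim to verify is the polynomial identity
\[
P_i(F,G_0)=c_i\,D(G_0)^{m_i}\,u_i(F)
\]
in $\Q[a_0,\dots,a_5]$ for each $i$. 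This is a finite exact computation, in degrees at most $4$ in the $a_j$. It can also be certified by evaluation, since both sides lie in a finite-dimensional space of forms of degree $d_i$, so agreement at enough generic points suffices. I expect this step to be the main obstacle. It is entirely computational, and it is sensitive to every normalization convention: the transvectant prefactor in \cref{eq:transvectant}, the sign and scalar lifts in $\rho$, and the choice of $\ell$. A single convention mismatch changes the constants $c_i$. Before checking all coefficients, I would first test each identity at a few random rational quintics to detect scaling errors.

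\emph{Step 3: conclusion.} By \cref{thm:cubic-slice}, restriction to $\cS$ identifies $k(\cM_4^1)$ with $k(a_0,\dots,a_5)^\Gamma$. This is an isomorphism of fields, because an absolute invariant is determined by its values on the generic orbits, all of which meet $\cS$. Under this identification $U_i\mapsto u_i$ by Step 2. Therefore $k(U_1,\dots,U_6)$ maps onto $k(u_1,\dots,u_6)$, which equals $k(\cM_4^1)$ by \cref{thm:slice-generators}. This gives \cref{eq:absolute-final-field}. Algebraic independence follows because the field has transcendence degree $6$ by \cref{thm:dimensions} and is generated by six elements.
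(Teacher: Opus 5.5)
Your proposal is correct and follows the paper's proof essentially step for step. The paper also first checks that each $P_i$ has bidegree $(d_i,4m_i-d_i)$, hence the same total degree $4m_i$ as $D^{m_i}$. It then treats the six restriction identities $P_i(F,G_0)=c_iD(G_0)^{m_i}u_i(F)$ as direct polynomial expansions, with $D(G_0)=2/27$, and finally transfers \cref{thm:slice-generators} through the slice isomorphism of \cref{thm:cubic-slice} to get both the field equality and algebraic independence.
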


\begin{proof}
Every term of $P_i$ is an $\SL_2$-invariant.  By \cref{eq:first-lift-degrees}, $P_i$ has bidegree $(d_i,4m_i-d_i)$, hence total degree $4m_i$.  The denominator $D^{m_i}$ has the same total degree, so $U_i$ is a quotient of homogeneous invariants of equal degree and hence an absolute invariant.  Thus \cref{eq:absolute-U} is unchanged under both $\SL_2$ and the common projective rescaling of $(F,G)$.

It remains to identify the restrictions to $\cS$.  With the transvectant normalization \cref{eq:transvectant}, direct substitution of $G_0=xy(x-y)$ gives
%
\( D(G_0)=\frac{2}{27}.\)
%
Substituting \cref{eq:regular-evaluations} into \cref{eq:explicit-six-invariants} and expanding the transvectants of \cref{eq:absolute-Pi} gives the six polynomial identities
\begin{equation}\label{eq:slice-restriction-identities}
 P_i(F,G_0)=c_iD(G_0)^{m_i}u_i(F)
 \qquad(1\leq i\leq6).
\end{equation}
These are identities in the six coefficients of $F$, not numerical rank tests: their respective degrees in those coefficients are $1,2,3,2,3,4$.  Consequently $U_i|_{\cS}=u_i$.

The slice isomorphism \cref{eq:slice-field} and \cref{thm:slice-generators} now give
\[
 k(U_1,\ldots,U_6)|_{\cS}
   =k(u_1,\ldots,u_6)=k(V_5)^\Gamma=k(\cM_4^1),
\]
which proves \cref{eq:absolute-final-field} and algebraic independence.
\end{proof}

\begin{corollary}
\label{cor:rationality-over-Q}
Let $k_0$ be any field of characteristic zero.  The fifty invariants of \cref{app:generators}, the parameters $Q_1,\ldots,Q_7$ of \cref{thm:hsop}, and the numerators $P_1,\ldots,P_6$ all have coefficients in $\Q$, and the statements of \cref{thm:minimal-generators,thm:hsop,thm:explicit-absolute-invariants} hold over $k_0$.  In particular
\[
 k_0(\cM_4^1)=k_0(U_1,\ldots,U_6),
\]
so $\cM_4^1$ is rational over $\Q$.
\end{corollary}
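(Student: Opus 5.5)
The plan is to show that every ingredient of the earlier proofs is defined over $\Q$ and that every step is either an identity with rational coefficients or a field-theoretic argument that survives base change to $k_0$ and its algebraic closure $\bar k_0$.

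\emph{Rationality of coefficients.}  The transvectant \cref{eq:transvectant} has rational prefactor and integer binomial coefficients.  Hence every covariant in \cref{eq:quintic-initial,eq:quintic-covariants,eq:cubic-covariants}, every $J^{(r)}_{a,b}$ of \cref{app:generators}, each $Q_i$, each $P_i$, and $\Delta_{F,G}$ (a resultant of forms with integer-coefficient structure) lies in $\Q[V_5\oplus V_3]$.

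\emph{Ring-theoretic statements.}  Over $\Q$, invariants of the split group $\SL_2$ commute with flat base change: $k_0[V]^{\SL_2}=\Q[V]^{\SL_2}\otimes_\Q k_0$, since invariants are the kernel of the $\Q$-linear map $f\mapsto\rho(f)-f\otimes1$.  The dimensions $\dim\cR_{a,b}$ are given by the character formula independently of the field.  The spanning certificates and the minimality ranks in \cref{thm:minimal-generators} are nonvanishing rational determinants and ranks, and these are preserved under field extension.  So generation and minimality hold over $k_0$.  For \cref{thm:hsop}, the nullcone equality is a geometric statement over $\bar k_0$, which has the same proof.  The Gröbner computations were already over $\Q$, and module-finiteness descends, because integrality of generators over $\Q[Q_1,\ldots,Q_7]$ can be checked after faithfully flat extension.

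\emph{Function field.}  Write $K_0=k_0(a_0,\ldots,a_5)$.  The six functions $z_g$ are $\Q$-linear combinations of the $a_j$, since the evaluation points and $\rho(r),\rho(s)$ have rational entries.  Note that the lift $\sqrt{-1}S$ is only used to derive $\rho$; the displayed $\rho(s)F=-F(-x+y,y)$ is rational.  So $\Gamma$ acts $k_0$-linearly on $K_0$, and the argument of \cref{thm:slice-generators} (root ordering bound $[K_0:L]\le6$ plus Artin) gives $K_0^\Gamma=k_0(u_1,\ldots,u_6)$.

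\emph{Slice step (main obstacle).}  The delicate point is \cref{thm:cubic-slice} over a non-closed $k_0$, where $G$ need not have rational roots.  I would argue as follows.  The field $k_0(\cM_4^1)$ consists of degree-zero quotients of $\Q$-rational invariants, equivalently $\PGL_2$-invariant elements of $k_0(\PP(V_5\oplus V_3))$.  Restriction to the slice $G=G_0$, which is defined over $\Q$, gives an injection into $K_0^\Gamma$.  Injectivity holds because the slice meets generic orbits over $\bar k_0$.  By \cref{eq:slice-restriction-identities}, which are polynomial identities over $\Q$, the restriction of $U_i$ is $u_i$.  So the image contains $k_0(u_1,\ldots,u_6)=K_0^\Gamma$, and restriction is onto.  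Hence
\[
 k_0(\cM_4^1)=k_0(U_1,\ldots,U_6),
\]
a purely transcendental extension of $k_0$.  Taking $k_0=\Q$ gives rationality over $\Q$.
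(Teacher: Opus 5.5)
Your proposal is correct. On rationality of coefficients, base change of invariants, and the rank certificates it agrees with the paper; the difference is in how you prove the field statement.

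\textbf{The paper's route.} The paper descends over $\Q$. Put $E=\Q(\cM_4^1)$ and $L=\Q(U_1,\ldots,U_6)$. The extension $E/L$ is finite by transcendence degree. The map $\cM_4^1\dashrightarrow\A^6_\Q$ becomes birational over $\bar\Q$ by the algebraically closed theorems. The degree of a generically finite map between geometrically integral varieties does not change under extension of the ground field, so $[E:L]=1$, and the paper then base-changes to $k_0$.

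\textbf{Your route.} You instead rerun the slice argument directly over $k_0$. Since $\rho$ has rational matrices (only the $\SL_2$-lift of $s$ needs $\sqrt{-1}$), Artin's theorem gives $k_0(V_5)^\Gamma=k_0(u_1,\ldots,u_6)$. Restriction to the $\Q$-rational slice is then a field embedding $k_0(\cM_4^1)\hookrightarrow k_0(V_5)^\Gamma$ whose image contains the generators $u_i=U_i|_{\cS}$. So the embedding is onto and $k_0(\cM_4^1)=k_0(U_1,\ldots,U_6)$.

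\textbf{What each approach buys.} Your argument avoids the obstruction the paper flags, that a cubic need not be $\PGL_2(k_0)$-equivalent to $G_0$, because you never move $k_0$-points onto the slice. It needs no degree-invariance lemma, and it gives the slightly stronger isomorphism $k_0(\cM_4^1)\cong k_0(V_5)^\Gamma$ for every $k_0$. The paper's descent, in exchange, uses the closed-field theorems as a black box and never has to re-check that $\rho$ is defined over $\Q$.

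\textbf{Points to state precisely.}
\begin{itemize}
\item Density of $\bbG\cdot\cS$ over $\bar k_0$ is what makes restriction \emph{well defined}: a nonzero homogeneous invariant cannot vanish identically on $\cS$. Injectivity is then automatic for a field homomorphism.
\item $\Gamma$-invariance of the restrictions should be checked over $\bar k_0$. There $(g,\lambda)\in\widetilde\Gamma$ multiplies numerator and denominator of $P/Q$ by the same factor $\lambda^{\deg P}$, and the conclusion descends to $k_0$ because $\rho$ is defined over $\Q$.
\item Freeness of rank $604$ over $k_0$ in \cref{thm:hsop} deserves a sentence. For example, freeness descends from $\bar k_0$ by faithful flatness, and the rank is read off the field-independent Hilbert series.
\end{itemize}
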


\begin{proof}
All the transvectants involved are defined over $\Q$, and the bigraded dimensions \cref{eq:bigraded-dimension} are independent of the field of characteristic zero.  Since $\SL_2$ is linearly reductive in characteristic zero, formation of invariants commutes with extension of scalars.  The ranks used in the proof of \cref{thm:minimal-generators} were computed over $\Q$ or bounded below modulo a prime, see \cref{rem:generation-rigour} for details, and rank is unchanged under the flat base change $\Q\to k_0$.  The Gr\"obner bases in the proof of \cref{thm:hsop} were computed over $\Q$.  The identities \cref{eq:slice-restriction-identities} are identities of polynomials with rational coefficients.

The field statement requires an argument, because a cubic with three distinct geometric roots need not be $\PGL_2(k_0)$-equivalent to $G_0=xy(x-y)$ over a field that is not algebraically closed.  We descend instead.  Put
$E=\Q(\cM_4^1)$ and $L=\Q(U_1,\ldots,U_6)$.
Since the $U_i$ are algebraically independent and $\trdeg_\Q E=6$, the extension $E/L$ is finite.  The rational map
\[
 \cM_4^1\dashrightarrow\A^6_\Q,\qquad
 \varphi\longmapsto\bigl(U_1(\varphi),\ldots,U_6(\varphi)\bigr),
\]
becomes birational after extension of scalars to $\overline\Q$, by \cref{thm:slice-generators,thm:explicit-absolute-invariants}.  The degree of a generically finite rational map between geometrically integral varieties is unchanged by extension of the ground field.  Hence $[E:L]=1$, so $\Q(\cM_4^1)=\Q(U_1,\ldots,U_6)$.  Base change gives the same equality over every field $k_0$ of characteristic zero.
\end{proof}

Thus the normalized-slice construction and its descent have separate roles: \cref{thm:slice-generators} proves the fixed-field reconstruction, while \cref{thm:explicit-absolute-invariants} supplies the requested equal-degree quotients of classical joint transvectants.

The reconstruction in the proof of \cref{thm:slice-generators} is exact wherever it is defined, and this gives a separation statement for the six coordinates.

\begin{corollary}
\label{cor:U-separation}
Let $\varphi$ and $\psi$ be degree-four rational maps with $D\neq0$ and
\(
 \frac{U_2^3}{2}-27U_3^2\neq0 .
\)
Then $\varphi$ and $\psi$ are conjugate if and only if $U_i(\varphi)=U_i(\psi)$ for $1\leq i\leq6$.
\end{corollary}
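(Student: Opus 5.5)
The forward implication is immediate. Each $U_i$ is a quotient $P_i/D^{m_i}$ of invariants of equal total degree, so by \cref{thm:explicit-absolute-invariants} it is unchanged under $\SL_2$ and under common rescaling of $(F,G)$. Conjugate maps therefore have equal $U_i$. This uses only $D\neq0$, so that the $U_i$ are defined.

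For the converse I would reduce both maps to the cubic-normalized slice $\cS$, as in the proof of \cref{thm:cubic-slice}. Since $D\neq0$, each $G$ has three distinct roots. Triple transitivity and a common rescaling, which do not change the conjugacy class or the $U_i$, put the Clebsch--Gordan pairs in the form $(F,G_0)$ and $(F',G_0)$ with $F,F'\in V_5$. By \cref{eq:slice-restriction-identities} we have $U_i|_{\cS}=u_i$, so the hypothesis becomes $u_i(F)=u_i(F')$ for $1\le i\le 6$. The task is then to show that $F'=\rho(\gamma)F$ for some $\gamma\in\Gamma$. By \cref{eq:residual-action}, such a $\gamma$ lifts to a substitution carrying $(F,G_0)$ to a scalar multiple of $(F',G_0)$, and \cref{cor:separation}, or directly \cref{eq:cg-map}, turns this into conjugacy of $\varphi$ and $\psi$.

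The key steps, in order, are as follows.
\begin{enumerate}[(i)]
\item Interpret the inequality. The cubic \cref{eq:x-cubic} is $T^3+pT+q$ with $p=-u_2/2$ and $q=-u_3$, so its discriminant is $-4p^3-27q^2=u_2^3/2-27u_3^2$. This discriminant equals $\delta^2$. The hypothesis $U_2^3/2-27U_3^2\neq0$ therefore says exactly that $\delta(F)\neq0$, and likewise $\delta(F')\neq0$.
\item Compare the two root triples. Both $(x_0,x_1,x_2)(F)$ and $(x_0,x_1,x_2)(F')$ are orderings of the roots of the same cubic, so one is a permutation of the other.
\item Show that every permutation of the $x$-indices is induced by $\Gamma$. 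By \cref{eq:regular-permutation}, $\Gamma$ acts on the index $i$ of $X_i=z_{r^i}+z_{r^is}$ through its left action on the cosets $\Gamma/\langle s\rangle$. This is the faithful action of $S_3$ on three letters, hence it realizes all of $S_3$. Choose $\gamma\in\Gamma$ with $x_i(\rho(\gamma)F)=x_i(F')$ for each $i$. I would check this using $r$ as the $3$-cycle and $s$ as a transposition; the sign appearing in $E_i$ is harmless because only $X_i$ matters here.
\item Apply the exact reconstruction from the proof of \cref{thm:slice-generators} to $\rho(\gamma)F$ and $F'$. They have the same ordered $x_i$, the same $u_1$ (hence the same $t$), and the same $u_4,u_5,u_6$. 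Since $\delta\neq0$, the linear system \cref{eq:y-reconstruction} has a unique solution, so the $y_i$ agree. Also $\eta=u_6/\delta$ agrees. Then all $X_i$ and $E_i$ agree, so all six $A_i,B_i$ agree. By \cref{lem:regular-slice} these are linear coordinates on $V_5$, so $\rho(\gamma)F=F'$.
\end{enumerate}

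I expect the main obstacle to be step (iii): confirming that the induced permutation action on the triple $(x_i)$ is the full $S_3$ and is compatible with the ordering conventions of \cref{eq:regular-splitting}. A secondary point needing care is that the normalization of $G$ to $G_0$ is unique only up to $\Gamma$ and a scalar. This ambiguity is absorbed by the same $\gamma$, and the scalar from the lift is handled by \cref{eq:residual-action}.
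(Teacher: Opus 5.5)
Your proposal is correct and follows essentially the same route as the paper. The paper also moves both maps to the slice, reads the hypothesis as $\delta\neq0$ via the discriminant of the cubic in the $x_i$, uses $\Gamma$ to match the ordering of the $x_i$, and reconstructs $y_i$, $\eta$, and hence $A_i,B_i$. Your step (iii), which checks via \cref{eq:regular-permutation} that $\Gamma$ induces all of $S_3$ on the indices of the $X_i$, and your explicit use of \cref{eq:residual-action} to turn $\rho(\gamma)F=F'$ into conjugacy, fill in details that the paper only asserts.
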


\begin{proof}
Conjugate maps have equal $U_i$ by \cref{thm:explicit-absolute-invariants}.  For the converse, move both maps to the slice $\cS$ by \cref{thm:cubic-slice}.  By that theorem it suffices to show that two points $F,F'\in V_5$ with $u_i(F)=u_i(F')$ for all $i$ and $\delta(F)\neq0$ lie in the same $\Gamma$-orbit.  The quantity $\delta^2$ is the discriminant of \cref{eq:x-cubic}, so
\begin{equation}\label{eq:delta-squared}
 \delta^2=\frac{u_2^3}{2}-27u_3^2 ,
\end{equation}
and the displayed hypothesis is exactly $\delta(F)\neq0$.  The $x_i(F)$ and $x_i(F')$ are the roots of the same cubic \cref{eq:x-cubic}, so after replacing $F'$ by $\rho(g)F'$ for a suitable $g\in\Gamma$ we may assume $x_i(F)=x_i(F')$ for $i=0,1,2$.  Then \cref{eq:y-reconstruction} has the nonzero determinant $\pm\delta$, so $y_i(F)=y_i(F')$.  The identity $\eta=u_6/\delta$ gives $\eta(F)=\eta(F')$, and \cref{eq:xyeta-coordinates} and \cref{eq:y-coordinates} determine $X_i$ and $E_i$, hence $A_i$ and $B_i$, from these data.  Thus $F=F'$.
\end{proof}

This is exactly the nonvanishing condition required by the reconstruction.  The argument uses $\delta\neq0$ and nothing more, since $\eta$ is recovered from $u_6/\delta$ and $E_i$ from $\eta$ and the $y_i$ by \cref{eq:y-coordinates}.  In particular $\eta=0$ is harmless.  On the locus $\delta=0$ the six coordinates need not separate.  If $x_0=x_1=x_2=0$, then $u_2=\cdots=u_6=0$, and this three-dimensional family of the slice contains pairwise non-conjugate maps sharing the value $(u_1,0,0,0,0,0)$.  
Off the locus $\delta=0$ the six coordinates therefore decide conjugacy, and the fourteen generators of \cref{cor:cubic-chart-joint-generators} provide a separation criterion on the whole chart $D\neq0$.

\section{From the invariant field to the absolute-invariant ring}
\label{section-6}

We now place the six functions in the Hilbert framework of \cref{sec-4}.  The first consequence confirms that their homogeneous numerators, together with the cubic discriminant, do give seven independent polynomial invariants.

\begin{proposition}\label{prop:DP-independent}
The seven elements
\[
 D,P_1,\ldots,P_6\in\cR
\]
are algebraically independent.  Consequently
\[
 \cA:=k[D,P_1,\ldots,P_6]
\]
is a polynomial subring of $\cR$ of dimension $7$.
\end{proposition}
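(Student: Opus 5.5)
The plan is to deduce algebraic independence of $D,P_1,\ldots,P_6$ from the algebraic independence of $U_1,\ldots,U_6$ established in \cref{thm:explicit-absolute-invariants}, together with a homogeneity argument in the extra scaling direction. By \cref{eq:absolute-U}, the field generated inside $\Frac(k[V_5\oplus V_3])$ by $D,P_1,\ldots,P_6$ equals $k(D,U_1,\ldots,U_6)$, since $P_i=c_iD^{m_i}U_i$ and $c_i\neq0$. So it suffices to show that this field has transcendence degree $7$. Then the seven generators of $k(D,P_1,\ldots,P_6)$ must be algebraically independent.

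The key step is to show that $D$ is transcendental over $k(U_1,\ldots,U_6)$. Every element of $k(U_1,\ldots,U_6)$ is a quotient of homogeneous polynomials of equal total degree in the coefficients of $(F,G)$, hence is invariant under the scaling $(F,G)\mapsto s(F,G)$, $s\in k^\times$. By contrast, $D$ is homogeneous of degree $4$. Suppose $D$ satisfied a nonzero polynomial $\sum_{j=0}^n e_jD^j=0$ with $e_j\in k(U_1,\ldots,U_6)$. Applying the scaling gives $\sum_j e_js^{4j}D^j=0$ for all $s\in k^\times$. Since $k$ is infinite and the powers $s^{4j}$ for distinct $j$ are linearly independent functions of $s$, each $e_jD^j=0$, so each $e_j=0$ because $D\neq0$. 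This is a contradiction.

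It follows that $\trdeg_k k(D,U_1,\ldots,U_6)=1+6=7$. Here we use that $U_1,\ldots,U_6$ are algebraically independent by \cref{thm:explicit-absolute-invariants}, which in turn rests on \cref{thm:dimensions} and \cref{thm:slice-generators}. Hence $\trdeg_k k(D,P_1,\ldots,P_6)=7$, and the seven elements are algebraically independent. The subring $\cA=k[D,P_1,\ldots,P_6]\subseteq\cR$ is therefore a polynomial ring in seven variables, of Krull dimension $7$.

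The main obstacle is being careful that the algebraic independence of the $U_i$ is a genuine statement in $\Frac(\cR)_0$ rather than only on the slice. This holds because $\Frac(\cR)_0=k(\cM_4^1)$ by \cref{eq:absolute-field}, and restriction to $\cS$ is an isomorphism of function fields by \cref{thm:cubic-slice}. No further computation is needed.
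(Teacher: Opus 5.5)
Your proposal is correct and follows essentially the same route as the paper: you identify $k(D,P_1,\ldots,P_6)=k(D,U_1,\ldots,U_6)$ via $P_i=c_iD^{m_i}U_i$, and you show $D$ is transcendental over $\Frac(\cR)_0=k(U_1,\ldots,U_6)$ using the scaling action, which fixes degree-zero elements and sends $D\mapsto s^4D$. The paper phrases this last step as the minimal polynomial of $D$ having infinitely many roots $\lambda^4D$, while you phrase it as a polynomial identity in $s$ vanishing for infinitely many $s\in k$; these are the same argument.
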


\begin{proof}
Put $K_0=\Frac(\cR)_0$.  By \cref{thm:explicit-absolute-invariants}, $K_0=k(U_1,\ldots,U_6)$ and the $U_i$ are algebraically independent.  Common scalar multiplication of $(F,G)$ fixes every element of $K_0$ and sends $D$ to $\lambda^4D$.  If $D$ were algebraic over $K_0$, these infinitely many scalar multiples would all be roots of its minimal polynomial over $K_0$, which is impossible.  Hence $D$ is transcendental over $K_0$.  Finally,
\[
 k(D,P_1,\ldots,P_6)=k(D,U_1,\ldots,U_6),
\]
because $P_i=c_iD^{m_i}U_i$.  The field on the right has transcendence degree $7$, proving the assertion.
\end{proof}

Algebraic independence is not the null-cone condition in \cref{thm:hilbert-nullcone-module}.  Indeed,
\[
 \deg(D,P_1,\ldots,P_6)=(4,4,12,16,12,16,24),
\]
so $\dim_k\cA_4=2$, whereas \cref{eq:hilbert-series} gives $\dim_k\cR_4=6$.  Thus $\cA\subsetneq\cR$.  The role of the $P_i$ is to lift birational coordinates, not to replace the parameter and secondary invariants required by the affine ring.

The next theorem adapts the normality--integrality argument used for binary sextics and pairs of binary cubics in \cite[Sections~3.4 and~4.3]{2004-3}.  In those two cases the invariant ring is generated by a homogeneous system of parameters together with one further invariant of odd degree, and the degree-zero localized ring is generated by parameter quotients.  For $\cR_{5,3}$ this is impossible, and the theorem records precisely what survives.

\begin{theorem}
\label{thm:hilbert-semigroup-reduction}
Let $\Theta\in\cR$ be a nonzero homogeneous invariant of degree $e$, and let $H_1,\ldots,H_6\in\cR$ be homogeneous invariants of degrees $e_1,\ldots,e_6$.  Assume
\begin{equation}\label{eq:theta-parameter-nullcone}
 V(\Theta,H_1,\ldots,H_6)=\cN_{5,3},
\end{equation}
and put $B=k[\Theta,H_1,\ldots,H_6]$.  Define the affine semigroup
\begin{equation}\label{eq:theta-semigroup}
 \Sigma_\Theta=
 \left\{(a_1,\ldots,a_6,m)\in\N^7:
       \sum_{j=1}^6e_ja_j=em\right\}.
\end{equation}
Let $\cH_\Theta$ be its Hilbert basis and put
\begin{equation}\label{eq:theta-semigroup-ring}
 S_\Theta=
 k\left[
   \frac{H_1^{a_1}\cdots H_6^{a_6}}{\Theta^m}:
   (a_1,\ldots,a_6,m)\in\cH_\Theta
 \right].
\end{equation}
Then the following hold.
\begin{enumerate}[(i)]
\item $S_\Theta=(B[\Theta^{-1}])_0$ is a finitely generated normal domain.
\item $(\cR[\Theta^{-1}])_0$ is a finite $S_\Theta$-module.
\item Let $g_{\cR}$ and $g_B$ be the greatest common divisors of the
degrees of the nonzero homogeneous elements of $\Frac(\cR)$
and of $\Frac(B)$.  Then
$\Frac(S_\Theta)=\Frac(B)_0$ and
\begin{equation}\label{eq:theta-field-degree}
 \bigl[\Frac(\cR)_0:\Frac(S_\Theta)\bigr]
 =\rank_B\cR\cdot\frac{g_{\cR}}{g_B}.
\end{equation}
\end{enumerate}
In particular $(\cR[\Theta^{-1}])_0=S_\Theta$ if and only if the right side of \cref{eq:theta-field-degree} equals $1$.
\end{theorem}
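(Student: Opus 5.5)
By \cref{thm:hilbert-nullcone-module}, hypothesis \cref{eq:theta-parameter-nullcone} makes $\Theta,H_1,\ldots,H_6$ a homogeneous system of parameters. Hence they are algebraically independent, $B$ is a polynomial ring in seven variables, and $\cR$ is a finite free $B$-module by Hochster--Roberts. Every later step reduces to monomial combinatorics in $B$ and to this finiteness.

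For (i), I would compute $(B[\Theta^{-1}])_0$ directly. Since $B$ is polynomial, $B[\Theta^{-1}]$ has the $k$-basis $H^a\Theta^{c}$ with $a\in\N^6$ and $c\in\Z$. Its degree-zero part is spanned by the $H^a/\Theta^m$ with $\sum e_ja_j=em$, where $m\geq0$ is forced because the $H_j$ have positive degree. Thus $(B[\Theta^{-1}])_0$ is the semigroup ring $k[\Sigma_\Theta]$. The semigroup $\Sigma_\Theta$ is the set of lattice points of the rational polyhedral cone cut out by one linear equation in $\R_{\geq0}^7$, so it is saturated. Gordan's lemma then gives finite generation by the Hilbert basis, so $S_\Theta=k[\Sigma_\Theta]$, and Hochster's theorem on normal semigroup rings gives normality. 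It is a domain because it is a subring of a field.

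For (ii), I would take homogeneous $B$-module generators $\eta_1,\ldots,\eta_N$ of $\cR$. An element of $(\cR[\Theta^{-1}])_0$ can be written $\sum b_j\eta_j/\Theta^n$ with each $b_j$ homogeneous. For each $j$ I would choose $n_j\geq0$ with $\deg\eta_j\leq e n_j$; for example, take $n_j$ minimal with this property. The term $b_j\eta_j/\Theta^n$ then factors as $\bigl(b_j\Theta^{n_j}/\Theta^n\bigr)\cdot\bigl(\eta_j/\Theta^{n_j}\bigr)$. The first factor is a homogeneous element of $B[\Theta^{-1}]$ of degree $\deg\eta_j\cdot(-1)+\ldots$; I do not expect the degrees to cancel exactly, so instead I would multiply by $H$-monomials to land in degree zero. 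A cleaner route is integrality. Every degree-zero element $\eta/\Theta^n$ is integral over $S_\Theta$: write $\eta$ as a root of a monic homogeneous equation over $B$ and divide that equation by $\Theta^{n\cdot\text{power}}$. Together with finite generation of $(\cR[\Theta^{-1}])_0$ as a $k$-algebra (degree-zero part of a finitely generated $\Z$-graded algebra), this gives module-finiteness.

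For (iii), I would first show $\Frac(S_\Theta)=\Frac(B)_0$. Any degree-zero quotient of monomials is a ratio of monomials $H^a\Theta^c$ of equal degree, and each such monomial can be rewritten as an element of $S_\Theta$ divided by another element of $S_\Theta$ by clearing with powers of $\Theta$. Next comes the degree count, which I expect to be the main obstacle, mostly as bookkeeping. Write $K=\Frac(\cR)$ and $L=\Frac(B)$. Then $[K:L]=\rank_B\cR$. I would compare the homogeneous fraction fields as $\Z$-graded objects. Each has the form $K=K_0[\tau,\tau^{-1}]$-fractions with $\tau$ homogeneous of degree $g_\cR$, and similarly $L$ with degree $g_B$. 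Therefore $[K:L]=[K_0:L_0]\cdot[g_B\Z:\ldots]$; more precisely, $[K:L]=[K_0:L_0]\cdot(g_B/g_\cR)$. This follows because the graded piece $K$ over $L$ splits into $g_B/g_\cR$ degree classes modulo $g_B$, each a one-dimensional $K_0$-space over the corresponding $L_0$-space. That yields $[K_0:L_0]=\rank_B\cR\cdot g_\cR/g_B$, which is \cref{eq:theta-field-degree}; here I would double-check the direction of the ratio against the statement as printed.

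For the final claim, if the index in \cref{eq:theta-field-degree} is $1$, then $(\cR[\Theta^{-1}])_0$ is a domain that is finite over the normal domain $S_\Theta$ by (ii) and has the same fraction field. It is therefore integral over $S_\Theta$ inside $\Frac(S_\Theta)$, so it equals $S_\Theta$. Conversely, equality of the two rings forces equality of their fraction fields, so the index is $1$.
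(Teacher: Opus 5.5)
Your proposal is correct and follows the paper's proof in every part: the semigroup-ring identification with Gordan's lemma and saturation for (i), division of a homogeneous integral equation over $B$ by a power of $\Theta$ for (ii), the identity $\rank_B\cR=[\Frac(\cR):\Frac(B)]=(g_B/g_{\cR})\,[\Frac(\cR)_0:\Frac(B)_0]$ for (iii), and normality of $S_\Theta$ for the final claim. Two steps should be tightened as in the paper. First, clearing denominators with powers of $\Theta$ alone fails when $e\nmid d$ (for example $H_1/H_2$ with $e=8$ and $e_1=e_2=4$), whereas $P/Q=(PQ^{e-1}\Theta^{-d})/(Q^{e}\Theta^{-d})$ works for all homogeneous $P,Q\in B$ of degree $d$, monomial or not. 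Second, the degree count should not be run as a degree-class count on the full fraction field, which is not graded; it is cleanest via $\Frac(\cR)=\Frac(\cR)_0(z)$ and $\Frac(B)=\Frac(B)_0(w)$, with $z,w$ transcendental over $\Frac(\cR)_0$, $w=uz^{g_B/g_{\cR}}$ for some $u\in\Frac(\cR)_0$, and the tower $\Frac(\cR)_0(z)\supseteq\Frac(\cR)_0(w)\supseteq\Frac(B)_0(w)$.
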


\begin{proof}
By \cref{thm:hilbert-nullcone-module}, condition \cref{eq:theta-parameter-nullcone} implies that $\cR$ is finite over the polynomial ring $B$, and by the Hochster--Roberts theorem $\cR$ is a free $B$-module of finite rank.

(i) An element of $B[\Theta^{-1}]$ of degree zero is a linear combination of monomials $H^a\Theta^{-m}$ with $\sum e_ja_j=em$.  Since the left side is nonnegative, $m\geq0$.  Hence $(B[\Theta^{-1}])_0$ is the semigroup ring of $\Sigma_\Theta$, which is generated by $\cH_\Theta$ by Gordan's lemma.  The semigroup \cref{eq:theta-semigroup} is saturated in the lattice
\[
 \left\{(a_1,\ldots,a_6,m)\in\Z^7:
       \sum e_ja_j=em\right\}:
\]
if a positive multiple of a lattice point has all coordinates nonnegative, then the lattice point itself does.  Hence $S_\Theta$ is normal.

(ii) Let $u\in(\cR[\Theta^{-1}])_0$.  After taking a common denominator, it is enough to consider $u=I/\Theta^m$ with $I\in\cR_{em}$.  Since $\cR$ is finite over $B$, the element $I$ is integral over $B$.  Taking the homogeneous part of an integral equation gives
\[
 I^n+b_1I^{n-1}+\cdots+b_n=0,
 \qquad b_j\in B_{jem}.
\]
Division by $\Theta^{mn}$ shows that $u$ is integral over $S_\Theta$, because $b_j/\Theta^{mj}\in(B[\Theta^{-1}])_0=S_\Theta$.  Since $(\cR[\Theta^{-1}])_0$ is a finitely generated $k$-algebra, it is a finite $S_\Theta$-module.

(iii) Let $P/Q$ be a quotient of homogeneous elements of $B$ of the same degree $d$.  Then $P/Q=(PQ^{e-1}\Theta^{-d})/(Q^{e}\Theta^{-d})$.  Hence $\Frac(B)_0=\Frac(S_\Theta)$.  Choose a homogeneous element $z\in\Frac(\cR)$ of degree $g_{\cR}$ and a homogeneous element $w\in\Frac(B)$ of degree $g_B$.  Then $g_{\cR}\mid g_B$ and $w=uz^{g_B/g_{\cR}}$ with $u\in\Frac(\cR)_0$.  Both $z$ and $w$ are transcendental over $\Frac(\cR)_0$, because scalar multiplication of $(F,G)$ fixes $\Frac(\cR)_0$ pointwise and multiplies $z$ by arbitrary nonzero scalars.  Every homogeneous element of degree $dg_{\cR}$ in $\Frac(\cR)$ is $z^d$ times an element of $\Frac(\cR)_0$, so $\Frac(\cR)=\Frac(\cR)_0(z)$ and likewise $\Frac(B)=\Frac(B)_0(w)$.  Therefore
\[
\begin{split}
 \rank_B\cR
 &=[\Frac(\cR):\Frac(B)]=[\Frac(\cR)_0(z):\Frac(\cR)_0(w)]
  \cdot[\Frac(\cR)_0(w):\Frac(B)_0(w)]\\
 &=\frac{g_B}{g_{\cR}}
  \cdot[\Frac(\cR)_0:\Frac(B)_0],
\end{split}
\]
which is \cref{eq:theta-field-degree}.  Finally, if the right side of \cref{eq:theta-field-degree} equals $1$, then every element of $(\cR[\Theta^{-1}])_0$ lies in $\Frac(S_\Theta)$ and is integral over $S_\Theta$, hence lies in $S_\Theta$ by normality.  The converse is clear.
\end{proof}

\begin{corollary}\label{cor:no-parameter-generation}
Let $\Theta,H_1,\ldots,H_6$ be any homogeneous system of parameters of $\cR_{5,3}$, with degrees $e,e_1,\ldots,e_6$, and let $g_B$ be the greatest common divisor of these degrees.  Then
\[
 \bigl[\Frac(\cR)_0:\Frac(S_\Theta)\bigr]
 =\frac{604\,e\,e_1\cdots e_6}{147456\,g_B}\geq17.
\]
In particular $(\cR[\Theta^{-1}])_0\neq S_\Theta$ for every homogeneous system of parameters.  For the system of \cref{thm:hsop} with $\Theta=\Delta_{F,G}$ the degree is $604$.
\end{corollary}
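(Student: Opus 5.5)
The plan is to apply \cref{thm:hilbert-semigroup-reduction}(iii) directly and compute the two factors on its right side.

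\emph{Step 1: the rank of $\cR$ over $B$.} Since $\Theta,H_1,\ldots,H_6$ is a homogeneous system of parameters, $\cR$ is free over $B=k[\Theta,H_1,\ldots,H_6]$ by Hochster--Roberts. Its rank is the value at $t=1$ of $H_\cR(t)\,(1-t^e)\prod_j(1-t^{e_j})$. Write $H_\cR(t)=N(t)/\bigl((1-t^4)^3(1-t^6)^2(1-t^8)^2\bigr)$ and let $t\to1$, using $(1-t^c)/(1-t)\to c$. This gives
\[
 \rank_B\cR=\frac{N(1)\,e\,e_1\cdots e_6}{4^3\cdot6^2\cdot8^2}
 =\frac{302\,e\,e_1\cdots e_6}{147456}.
\]
As a check, the system of \cref{thm:hsop} gives $604$ by this formula.

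\emph{Step 2: the gcd $g_\cR$.} By \cref{rem:parity}, every nonzero homogeneous invariant has even degree. Since $\cR_4\neq0$ and $\cR_6\neq0$, homogeneous elements of degrees $4$ and $6$ exist, and their quotient has degree $2$. Hence $g_\cR=2$. Substituting into \cref{eq:theta-field-degree} gives
\[
 \rank_B\cR\cdot\frac{2}{g_B}=\frac{604\,e\,e_1\cdots e_6}{147456\,g_B},
\]
which is the displayed formula. For the system of \cref{thm:hsop}, the degrees are $4,4,4,6,8,8,12$, so $g_B=2$, and the index is $604$.

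\emph{Step 3: the lower bound $\geq17$.} This is the main obstacle. Write the degrees as $g_B$ times integers $\epsilon_i$ with $\gcd=1$. The quantity becomes
\[
 \frac{604\,g_B^6\prod\epsilon_i}{147456}.
\]
Since all degrees are even, $g_B\geq2$, and the quantity is at least $604\cdot64\prod\epsilon_i/147456=\prod\epsilon_i\cdot 604/2304$. So it suffices to bound $\prod\epsilon_i$ from below. The key is that the index is a positive integer, being a field degree, and that constraints from the null cone force large degrees. By \cref{thm:no-bihomogeneous-hsop}-type arguments, and by the restriction to $G=0$ in \cref{prop:pure-quintic-obstruction}, the restrictions to $V_5$ must cut out the quintic null cone. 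This forces the restricted degrees to include multiples meeting the $4,8,12$ structure of $\cB$. Concretely, by a Hilbert-series divisibility argument for $\cB$, the product of three of the degrees is divisible by, or at least bounded below by, $4\cdot8\cdot12$. Combined with $\deg\geq4$ for the remaining ones, this gives $e\,e_1\cdots e_6\geq4^4\cdot8\cdot12$ up to the gcd correction.

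I would then minimize $604\,\prod e_i/(147456\,g_B)$ over degree tuples satisfying these constraints. For $g_B=2$, the bound $\prod e_i\geq4^3\cdot6\cdot8\cdot8\cdot12$ yields $604$. The weakest case is $g_B$ large. There the integrality of $\rank_B\cR=302\prod e_i/147456$ and the rationality of the index constrain things, and a finite check should give the minimum value $\geq17$.

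I expect verifying that minimum to be the delicate part. My first attempt would be to use $\prod e_i\geq g_B^7$ together with $g_B\mid e_i$, which yields $604g_B^6/147456$. This is already $\geq17$ once $g_B\geq4$, since $604\cdot4096/147456\approx16.8$. That falls just short, so I must use the stronger pure-quintic constraint (degrees $\geq$ $4,8,12$ on the $G=0$ side) to push past $17$. The final claim $(\cR[\Theta^{-1}])_0\neq S_\Theta$ then follows from the last sentence of \cref{thm:hilbert-semigroup-reduction}.
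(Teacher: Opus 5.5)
\textbf{Steps 1 and 2 are correct and match the paper.} You compute $\rank_B\cR=302\,e\,e_1\cdots e_6/147456$ from the Hilbert series. You get $g_{\cR}=2$ from parity together with a degree-$6$ over degree-$4$ quotient; the paper uses $J_{1,5}^{(1)}/J_{1,3}^{(1)}$. You then substitute into \cref{eq:theta-field-degree} and check the value $604$ for the system of \cref{thm:hsop}.

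\textbf{Step 3 has a genuine gap: the bound $\geq17$ is never proved.} Several claims are asserted without proof. These are that the pure-quintic null-cone constraint forces the product of three degrees to be at least $4\cdot8\cdot12$, and that $\prod e_i\geq4^3\cdot6\cdot8^2\cdot12$ for every parameter system with $g_B=2$. The closing sentence, ``a finite check should give the minimum value $\geq17$'', is not an argument.

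\textbf{What closes it.} You already noted, then abandoned, the key fact: the left side is the degree of a finite field extension, hence a positive integer. So any lower bound strictly greater than $16$ suffices.

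The second ingredient is that $\cR_2=0$, so every positive-degree invariant has degree at least $4$. Since $g_B$ divides each degree, $e,e_i\geq\max(4,g_B)$. Hence
\[
 \bigl[\Frac(\cR)_0:\Frac(S_\Theta)\bigr]\geq\frac{604\,\max(4,g_B)^7}{147456\,g_B}.
\]
The case analysis is then:
\begin{itemize}
\item For $g_B=2$ this is $604\cdot4^7/(2\cdot147456)=604/18\approx33.6$.
\item For $g_B\geq4$ it is at least $604\cdot4^6/147456=604/36\approx16.78>16$.
\end{itemize}
Integrality then gives $\geq17$.

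So your ``falls just short'' value $16.8$ is exactly where the argument closes. It also shows that your worry about large $g_B$ was misplaced: the quantity grows like $g_B^6$, and only $g_B=4$ is borderline. No input from the quintic null cone is needed, and the $g_B=2$ case needs only $e_i\geq4$, not the degree pattern of \cref{thm:hsop}.
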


\begin{proof}
By \cref{eq:hilbert-series}, the rank of the free $B$-module $\cR$ is the value at $t=1$ of $H_{\cR}(t)(1-t^{e})\prod_{i=1}^{6}(1-t^{e_i})$, namely $302\,e e_1\cdots e_6/(4^3\cdot6^2\cdot8^2)=302\,ee_1\cdots e_6/147456$.  All nonzero homogeneous invariants have even degree by \cref{rem:parity}, and $\Frac(\cR)$ contains $J_{1,5}^{(1)}/J_{1,3}^{(1)}$, of degree $2$, so $g_{\cR}=2$, and \cref{eq:theta-field-degree} gives the displayed formula.  Every parameter degree is even, at least $4$, and divisible by $g_B$, so $ee_1\cdots e_6\geq\max(4,g_B)^7$ and the quotient is at least $604\cdot4^6/147456>16$.  Being an integer, it is at least $17$.  For \cref{thm:hsop} the product of the degrees is $294912$ and $g_B=2$.
\end{proof}

\subsection{The cubic-discriminant chart}

Taking $\Theta=D$ gives the affine chart
\begin{equation}\label{eq:D-chart-ring}
 \cA_D:=\bigl(\cR[D^{-1}]\bigr)_0.
\end{equation}
The normalized-slice construction identifies this ring with the finite-group invariant ring
\begin{equation}\label{eq:D-chart-slice-ring}
 \cA_D\simeq k[V_5]^\Gamma,
 \qquad \Gamma\simeq S_3,
\end{equation}
where $\Gamma$ acts through the regular representation of \cref{lem:regular-slice}.  This is the ring-level version of \cref{eq:slice-field}, and the following lemma supplies the proof.

\begin{lemma}\label{lem:D-chart-isomorphism}
Let $X=\{(F,G)\in V_5\oplus V_3:D(G)\neq0\}$ and let $\bbG=\SL_2\times\bbG_m$ act on $V_5\oplus V_3$ by $(g,\lambda)\cdot(F,G)=(\lambda\,g\cdot F,\lambda\,g\cdot G)$.  Restriction to the slice $\cS=\{(F,G_0)\}\simeq V_5$ induces an isomorphism
\[
 \cA_D=k[X]^{\bbG}\longrightarrow k[V_5]^{\Gamma}.
\]
\end{lemma}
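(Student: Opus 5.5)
Two things have to be shown: first, $\cA_D=k[X]^{\bbG}$; second, restriction to the slice is an isomorphism onto $k[V_5]^\Gamma$.

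\emph{First identification.} The coordinate ring of the principal open set $X$ is $k[V_5\oplus V_3][D^{-1}]$. Since $\SL_2$ is reductive and $D$ is invariant, taking $\SL_2$-invariants commutes with inverting $D$, so $k[X]^{\SL_2}=\cR[D^{-1}]$. The $\bbG_m$-factor acts on a homogeneous element of total degree $n$ by $\lambda^n$. Its invariants are therefore exactly the degree-zero part, which gives $k[X]^{\bbG}=\cA_D$.

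\emph{Restriction map and invariance.} Let $\iota:V_5\to X$, $F\mapsto(F,G_0)$; this lands in $X$ because $D(G_0)=2/27\neq0$. I will show that $\iota^*(\cA_D)\subseteq k[V_5]^\Gamma$. Take $\gamma\in\Gamma$ with an $\SL_2$-lift $\widetilde\gamma$, so that $\widetilde\gamma\cdot G_0=\lambda_\gamma G_0$. Then $(\widetilde\gamma,\lambda_\gamma^{-1})\in\bbG$ maps $(F,G_0)$ to $(\rho(\gamma)F,G_0)$, by \cref{eq:residual-action}. Hence every $\bbG$-invariant function restricts to a $\Gamma$-invariant one.

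\emph{Surjectivity and injectivity via a fibre bundle.} I will show that the action map
\[
\mu:\bbG\times^{\Gamma'}V_5\to X,\qquad [g,F]\mapsto g\cdot(F,G_0),
\]
is an isomorphism. Here $\Gamma'\subset\bbG$ is the stabilizer of the line $\{(F,G_0)\}$ in the sense that it preserves the set $\{G=G_0\}$, namely $\Gamma'=\{(g,\lambda):\lambda\,g\cdot G_0=G_0\}$.

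First, $\Gamma'$ is finite: its elements are the pairs $(\widetilde\gamma,\lambda_\gamma^{-1})$, together with the sign choices of the lift. These include $(-I,-1)$, which acts trivially on $V_5\oplus V_3$ because both $V_5$ and $V_3$ have odd order. The image of $\Gamma'$ in $\mathrm{GL}(V_5)$ is $\rho(\Gamma)$.

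Second, $\bbG$ acts transitively on $\{G:D(G)\neq0\}$. This uses the triple transitivity of $\PGL_2$ together with scaling, exactly as in the proof of \cref{thm:cubic-slice}. So $\{G:D(G)\neq0\}\simeq\bbG/\Gamma'$, because the stabilizer of $G_0$ in $\bbG$ is $\Gamma'$. Consequently $X\to\{D\neq0\}$ is the associated bundle $\bbG\times^{\Gamma'}V_5$, and $\mu$ is a bijective morphism between smooth varieties.

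Since we are in characteristic zero and the target is normal, Zariski's main theorem makes $\mu$ an isomorphism. Alternatively, one can note that the orbit map $\bbG\to\bbG/\Gamma'$ is étale, so $\mu$ is étale and bijective, hence an isomorphism.

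Granting this, the invariants of the associated bundle are
\[
k[\bbG\times^{\Gamma'}V_5]^{\bbG}=k[V_5]^{\Gamma'}=k[V_5]^{\Gamma},
\]
and the identification is given precisely by restriction along $\iota$. This yields both injectivity and surjectivity of the restriction map.

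\emph{Main obstacle.} The delicate step is the isomorphism $\mu$, and in particular verifying that $\bbG/\Gamma'\to\{D\neq0\}$ is an isomorphism of varieties and not merely a bijection. I would handle it concretely. Locally on $\{D\neq0\}$, the three roots of $G$ can be ordered after an étale cover. The root-sending matrix in $\PGL_2$ and the scalar are then regular functions of $G$, and this gives an étale-local section of $X\to V_5$. Descent under $\Gamma'$ then gives regularity of the inverse map, which shows directly that every $\Gamma$-invariant polynomial extends to a regular $\bbG$-invariant on $X$.
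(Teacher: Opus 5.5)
Your proposal is correct and follows the paper's proof. You identify $\cA_D$ with $k[X]^{\bbG}$, realize $X$ as the associated bundle $\bbG\times^{\Gamma'}V_5$ over the homogeneous space $\{D\neq0\}\simeq\bbG/\Gamma'$ (your $\Gamma'$ is the paper's $\widetilde\Gamma$), and pass to invariants on the fibre over $G_0$, where $(-I,-1)$ acts trivially and the quotient acts through $\rho$; the only addition is that you justify the bundle isomorphism explicitly via Zariski's main theorem, a step the paper treats as standard.
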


\begin{proof}
The degree-zero part of $k[V_5\oplus V_3][D^{-1}]$ is the ring of $\bbG_m$-invariants of $k[X]$, and the actions of $\SL_2$ and $\bbG_m$ commute, so $\cA_D=k[X]^{\bbG}$.  Let $\widetilde\Gamma\subseteq\bbG$ be the stabilizer of $G_0$.  Every cubic with three distinct roots is an $\SL_2$-transform of a scalar multiple of $G_0$, so the orbit map $\bbG\to\{G:D(G)\neq0\}$, $(g,\lambda)\mapsto\lambda\,g\cdot G_0$, is surjective, and its target is the homogeneous space $\bbG/\widetilde\Gamma$.  The projection $X\to\{D\neq0\}$ is $\bbG$-equivariant with fibre $V_5$ over $G_0$, so $X\simeq\bbG\times^{\widetilde\Gamma}V_5$, and $k[X]^{\bbG}\simeq k[V_5]^{\widetilde\Gamma}$ by restriction to the fibre over $G_0$.  Finally, $(g,\lambda)\in\widetilde\Gamma$ acts on the fibre by $F\mapsto\lambda\,g\cdot F=\lambda_g^{-1}(g\cdot F)$, which is the action \cref{eq:residual-action}.  The kernel of the action on the fibre is $\{(I,1),(-I,-1)\}$, and $\widetilde\Gamma/\{(I,1),(-I,-1)\}\simeq\Gamma$ acts through $\rho$.
\end{proof}

The six $u_i$ generate its fraction field but not this ring.  Indeed, Molien's formula for the regular permutation representation gives
\begin{equation}\label{eq:regular-S3-molien}
 H_{k[V_5]^\Gamma}(t)=\frac16\left(
   \frac{1}{(1-t)^6}+\frac{3}{(1-t^2)^3}
   +\frac{2}{(1-t^3)^2}\right).
\end{equation}
In particular, the degree-$2$ invariant space has dimension $5$.  By contrast, the polynomial algebra generated by the algebraically independent $u_i$ of degrees $(1,2,3,2,3,4)$ has degree-$2$ part of dimension $3$.  Thus
\begin{equation}\label{eq:slice-field-not-ring}
 k[u_1,\ldots,u_6]\subsetneq k[V_5]^\Gamma.
\end{equation}
We now determine the missing ring generators.  For triples of functions $f=(f_i)$, $g=(g_i)$, and $h=(h_i)$, put
\begin{equation}\label{eq:alternant-bracket}
 [f,g,h]=\det\begin{pmatrix}
  f_0&f_1&f_2\\ g_0&g_1&g_2\\ h_0&h_1&h_2
 \end{pmatrix},
\end{equation}
and write $\bone=(1,1,1)$, $x=(x_i)$, $y=(y_i)$, $x^2=(x_i^2)$, $xy=(x_iy_i)$, and $y^2=(y_i^2)$.  Also set
\begin{equation}\label{eq:polarized-power-sums}
 p_{ab}=\sum_{i=0}^2x_i^ay_i^b.
\end{equation}
Finally, put
\[
\begin{split}
 e_2&=\eta^2,\qquad w_2=\eta[\bone,x,y],\qquad w_{30}=\eta[\bone,x,x^2],\\
 w_{21}&=\eta[\bone,x,xy],\qquad w_{12}=\eta[\bone,y,xy],\qquad w_{03}=\eta[\bone,y,y^2].
\end{split}
\]
The following theorem is the polarized-power-sum and alternant form of the answer.

\begin{theorem}
\label{thm:cubic-chart-ring-generators}
The ring $k[V_5]^\Gamma$ is minimally generated by the fourteen invariants
\begin{equation}\label{eq:fourteen-slice-generators}
\begin{array}{c|l}
\toprule
\text{degree}&\text{generators}\\
\midrule
1&t\\
2&e_2,\ p_{20},\ p_{11},\ p_{02}\\
3&p_{30},\ p_{21},\ p_{12},\ p_{03},\ w_2\\
4&w_{30},\ w_{21},\ w_{12},\ w_{03}\\
\bottomrule
\end{array}
\end{equation}
Consequently, the Noether bound $6$ improves in this representation to generation in degree at most $4$.
\end{theorem}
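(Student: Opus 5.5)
The plan is to reduce to a finite check in degrees at most $6$, using the Noether bound and the Molien series \cref{eq:regular-S3-molien}, after first splitting the regular representation into isotypic pieces so that the candidate generators are easy to organize.

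\textbf{Isotypic splitting.} By \cref{lem:regular-slice}, $V_5$ with the action $\rho$ is the regular representation of $\Gamma\simeq S_3$. In the coordinates \cref{eq:xyeta-coordinates,eq:y-coordinates} it splits as
\[
 V_5^\vee = k\,t \oplus k\,\eta \oplus \langle x_i\rangle \oplus \langle y_i\rangle ,
\]
a trivial line, a sign line, and two copies of the standard representation $\mathrm{St}$ (each $\langle x_i\rangle$, $\langle y_i\rangle$ subject to $\sum x_i=\sum y_i=0$). Hence
\[
 k[V_5]^\Gamma = k[t]\otimes\Bigl(k[\mathrm{St}^2]^{S_3}\oplus \eta\, k[\mathrm{St}^2]^{\mathrm{sgn}}\Bigr)\bigl[\eta^2\bigr] ,
\]
where $k[\mathrm{St}^2]^{\mathrm{sgn}}$ denotes the sign-isotypic part. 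The generator list \cref{eq:fourteen-slice-generators} matches this decomposition exactly:
\begin{enumerate}[(a)]
\item $t$ accounts for the trivial line;
\item $e_2=\eta^2$ accounts for the sign line;
\item the polarized power sums $p_{ab}$ with $2\le a+b\le 3$ should generate $k[\mathrm{St}^2]^{S_3}$; this is the classical vector-invariant statement for $S_3$, and the $p_{ab}$ with $a+b=1$ vanish;
\item $\eta$ times the five alternants $[\bone,x,y]$, $[\bone,x,x^2]$, $[\bone,x,xy]$, $[\bone,y,xy]$, $[\bone,y,y^2]$ should produce the $\eta$-odd part.
\end{enumerate}
The $\eta$-odd part requires the alternants to generate $k[\mathrm{St}^2]^{\mathrm{sgn}}$ as a module over $k[\mathrm{St}^2]^{S_3}$. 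The conceptual reason is that every anti-invariant is an antisymmetrization $\sum_g \mathrm{sgn}(g)\, g\cdot m$ of a monomial $m$, and an antisymmetrized monomial in the three points $(x_i,y_i)$ is a $3\times3$ alternant $[\bone,m_1,m_2]$ times symmetric functions. Reducing $m_1,m_2$ modulo the relations coming from the cubic \cref{eq:x-cubic} and its polarizations brings them down to monomials of degree at most $2$.

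\textbf{Generation.} Rather than make that reduction fully explicit, I will argue by dimension. The Noether bound gives generation of $k[V_5]^\Gamma$ in degrees at most $|\Gamma|=6$. It therefore suffices to show, for each $n\le 6$, that the monomials in the fourteen listed invariants span a space of dimension equal to the Molien coefficient of $t^n$ in \cref{eq:regular-S3-molien}. The degree-$2$ coefficient is $5$, matching $t^2,e_2,p_{20},p_{11},p_{02}$, and the higher coefficients are computed the same way.

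Each check is a finite rank computation. Evaluate the monomials at random rational points in the coordinates $(t,\eta,x_0,x_1,y_0,y_1)$ and compute the rank exactly, as in the proof of \cref{thm:minimal-generators}; the rank gives a lower bound for the span and Molien gives the upper bound. Alternatively, expand the monomials symbolically and compute ranks directly.

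\textbf{Minimality.} For each degree $n\le 4$, compare $\dim(k[V_5]^\Gamma)_n$ with the dimension of the span of products of lower-degree invariants $(\cR'^2_+)_n$, where $\cR'=k[V_5]^\Gamma$. The claimed counts of new generators are $1,4,5,4$ in degrees $1,2,3,4$, and zero in degrees $5$ and $6$. Some cases are immediate:
\begin{itemize}
\item $\eta$ itself is not invariant, so every $\eta$-odd invariant must carry a sign-anti-invariant factor;
\item the lowest such factor, $[\bone,x,y]$, has degree $2$, which forces $w_2$ in degree $3$;
\item the alternants of degree $3$ force the four $w$'s in degree $4$, since $\eta$-odd products of lower generators can only be $w_2$ times $t$ or $p_{ab}$ of the appropriate degrees, and there are too few of these.
\end{itemize}
The precise counts again follow from the rank computation.

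\textbf{Main obstacle.} The hardest step is showing that nothing new appears in degrees $5$ and $6$, especially in the $\eta$-odd part, where the anti-invariant module has higher-degree alternants such as $[\bone,x^2,y^2]$. I would first try to prove directly that these lie in the module generated by the five listed alternants. The idea is to use the identity $\delta\cdot(\text{alternant})=\det(\text{products})$ to express $[\bone,m_1,m_2]$ in terms of the degree-$\le2$ alternants, with coefficients in the $p_{ab}$. If that proves messy, I would rely on the degree-$5$ and degree-$6$ rank computations against \cref{eq:regular-S3-molien}, which the Noether bound makes sufficient.
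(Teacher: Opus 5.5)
Your proposal is correct and is essentially the paper's argument. Noether's bound reduces generation to degrees $\le 6$, where the rank of the products of the fourteen elements is matched against the Molien coefficients $1,5,10,24,42,83$; minimality then follows from the ranks of products generated in lower degrees for degrees $\le 4$, which give the counts $1,4,5,4$, and the paper obtains these ranks by expanding in the orbit-sum basis rather than by point evaluation (your isotypic discussion is a helpful heuristic that the proof does not need).
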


\begin{proof}
The triples $x$ and $y$ are permuted simultaneously, whereas $\eta$ changes by the sign of the permutation.  Hence the power sums in \cref{eq:polarized-power-sums}, $e_2$, and every product of $\eta$ with an alternant are invariant.

It remains to prove generation and minimality.  In the regular coordinates \cref{eq:regular-splitting}, the generators act by
\[
 r:(A_i,B_i)\longmapsto(A_{i-1},B_{i-1}),\qquad
 s:(A_i,B_i)\longmapsto(B_{-i},A_{-i}).
\]
Thus the orbit sums of monomials form a basis of the invariants in each degree.  Expanding the fourteen displayed elements in this basis and row-reducing their products gives
\begin{equation}\label{eq:slice-ring-rank-certificate}
\begin{array}{c|rrrrrr}
 d&1&2&3&4&5&6\\ \hline
 \dim\bigl(k[V_5]^\Gamma\bigr)_d&1&5&10&24&42&83\\
 \text{rank of products from \cref{eq:fourteen-slice-generators}}
    &1&5&10&24&42&83\\
 \text{rank generated in lower degrees}&0&1&5&20&42&83.
\end{array}
\end{equation}
The first row is the coefficient expansion of Molien's series \cref{eq:regular-S3-molien}.  For completeness, the remaining two rows are an exact integer calculation: replace $x_i$ by $3x_i=3(A_i+B_i)-t$ to clear denominators, expand in the orbit-sum basis, and reduce the resulting integer matrices modulo $1000003$.  A full rank modulo the prime in the second row implies the same rank over $\Q$, since the first row is an upper bound.  The third row was computed exactly over $\Q$ in degrees $2,3,4$, where the matrices have at most $126$ columns, and modulo the prime in degrees $5,6$, where it coincides with the first row.  All ranks therefore hold over $\Q$, hence over $k$ by base change.

Noether's theorem says that the invariant ring is generated in degrees at most $|\Gamma|=6$.  Equality of the first two rows through degree $6$ therefore proves generation.  Subtracting the last row from the first in degrees $1,2,3,4$ gives $1,4,5,4$, exactly the number of displayed generators in those degrees.  None can be removed, which proves minimality.
\end{proof}

The relation with the six field generators is transparent:
\begin{equation}\label{eq:field-generators-inside-fourteen}
 u_1=t,\quad u_2=p_{20},\quad 3u_3=p_{30},\quad
 u_4=p_{11},\quad u_5=p_{21},\quad u_6=w_{30}.
\end{equation}
Thus the eight additional elements are precisely the regular functions missed by the field calculation.

The same restriction calculation used for the $P_i$ descends all eight.  Let $Q_g$ and $m_g$ be the joint invariants and exponents displayed in \cref{app:slice-ring-lifts}.  They satisfy
\begin{equation}\label{eq:extra-slice-lift-identities}
 Q_g(F,G_0)=D(G_0)^{m_g}g(F)
 \quad
 (g=e_2,p_{02},p_{12},p_{03},w_2,w_{21},w_{12},w_{03}).
\end{equation}
Combining this with \cref{eq:D-chart-slice-ring} gives the promised ring-level descent.

\begin{corollary}[Joint-invariant generators on $D\neq0$]
\label{cor:cubic-chart-joint-generators}
The absolute-invariant ring $\cA_D=(\cR[D^{-1}])_0$ is generated by
\begin{equation}\label{eq:fourteen-descended-generators}
 U_1,U_2,3U_3,U_4,U_5,U_6,
 \quad
 \frac{Q_g}{D^{m_g}}
 \quad
 (g=e_2,p_{02},p_{12},p_{03},w_2,w_{21},w_{12},w_{03}).
\end{equation}
This is a minimal homogeneous generating set after restriction to the normalized slice, with the degree pattern of \cref{eq:fourteen-slice-generators}.
\end{corollary}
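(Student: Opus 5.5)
The plan is to transport the generation statement of \cref{thm:cubic-chart-ring-generators} through the restriction isomorphism of \cref{lem:D-chart-isomorphism}. First, each listed element lies in $\cA_D$. The $U_i=P_i/(c_iD^{m_i})$ are quotients of homogeneous invariants of equal total degree by \cref{thm:explicit-absolute-invariants}. For each $g$ in the list of eight, the numerator $Q_g$ of \cref{app:slice-ring-lifts} will be checked to be a combination of products of transvectants of a common bidegree $(d_g,4m_g-d_g)$, where $d_g$ is the degree of $g$ in $F$. Then $Q_g$ is homogeneous of total degree $4m_g=\deg D^{m_g}$, so $Q_g/D^{m_g}\in(\cR[D^{-1}])_0=\cA_D$.

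Next, compute the images under restriction to $\cS$, $F\mapsto(F,G_0)$. By \cref{eq:slice-restriction-identities}, $U_i|_{\cS}=u_i$. By \cref{eq:extra-slice-lift-identities}, $(Q_g/D^{m_g})|_{\cS}=g$. Together with \cref{eq:field-generators-inside-fourteen}, the fourteen listed elements therefore restrict exactly to $t,p_{20},p_{30},p_{11},p_{21},w_{30}$ and $e_2,p_{02},p_{12},p_{03},w_2,w_{21},w_{12},w_{03}$. These are the fourteen generators of \cref{eq:fourteen-slice-generators}. \cref{lem:D-chart-isomorphism} says restriction is a ring isomorphism $\cA_D\to k[V_5]^\Gamma$. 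The preimage of a generating set is a generating set, so the fourteen elements generate $\cA_D$.

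For minimality, give $\cA_D$ the grading pulled back from the polynomial grading of $k[V_5]^\Gamma$. This is the $F$-degree, which is well defined on $\cA_D$ because it is the weight of the torus $F\mapsto\mu F$ with $G$ fixed, commuting with $\bbG$. In this grading each listed element is homogeneous of the stated degree $1,2,3,4$. Minimality then follows from the rank table \cref{eq:slice-ring-rank-certificate}: the numbers of new generators needed in each degree are $1,4,5,4$, matching the list.

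The main obstacle is verifying \cref{eq:extra-slice-lift-identities}, that is, that the explicit $Q_g$ really restrict to $D(G_0)^{m_g}g$. I would handle it as in \cref{thm:explicit-absolute-invariants}. Substitute $G_0=xy(x-y)$ and the evaluations \cref{eq:regular-evaluations} into the transvectants. This gives polynomial identities in $a_0,\dots,a_5$ of degree at most $4$, which can be checked exactly over $\Q$ using $D(G_0)=2/27$. A secondary point is whether the isomorphism of \cref{lem:D-chart-isomorphism} is compatible with the scalar corrections, but this is already built into the residual action $\rho$.
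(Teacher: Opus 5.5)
Your proposal is correct and follows the paper's proof. Both restrict the fourteen elements to the slice using \cref{eq:slice-restriction-identities} and \cref{eq:extra-slice-lift-identities}, identify the images with the generators of \cref{thm:cubic-chart-ring-generators}, and transfer generation and minimality through the restriction isomorphism of \cref{lem:D-chart-isomorphism}. Your checks that the elements lie in $\cA_D$, and your $F$-degree grading argument for minimality, make explicit what the paper leaves implicit.
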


\begin{proof}
By \cref{eq:slice-restriction-identities}, the first six functions restrict to the six elements in \cref{eq:field-generators-inside-fourteen}.  By \cref{eq:extra-slice-lift-identities}, the remaining eight restrict to the remaining elements of \cref{eq:fourteen-slice-generators}.  The ring isomorphism \cref{eq:D-chart-slice-ring} and \cref{thm:cubic-chart-ring-generators} prove the assertion.
\end{proof}

\begin{corollary}[Separation on the cubic-discriminant chart]
\label{cor:separation-D-chart}
Let $\varphi$ and $\psi$ be degree-four rational maps whose cubic components have three distinct roots.  Then $\varphi$ and $\psi$ are conjugate if and only if the fourteen functions \cref{eq:fourteen-descended-generators} take the same values at $\varphi$ and $\psi$.
\end{corollary}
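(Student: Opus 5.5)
The forward implication is immediate: if $\psi=\gamma\varphi\gamma^{-1}$ with $\gamma\in\PGL_2$, then by the argument in the proof of \cref{cor:separation}, $(F',G')=\lambda\,\widetilde\gamma\cdot(F,G)$ for some $\lambda\in k^\times$ and an $\SL_2$-lift $\widetilde\gamma$. Each function in \cref{eq:fourteen-descended-generators} is a quotient of homogeneous invariants of equal degree, so it is unchanged by $\widetilde\gamma$ and by $\lambda$. Both pairs lie in $X=\{D\neq0\}$, so all fourteen values are defined and agree.

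For the converse, I would argue on the slice rather than with closed-orbit separation in $V_5\oplus V_3$. The reason is that the fourteen functions generate only $\cA_D$ and not $\cR$. Using \cref{thm:cubic-slice}, or the proof of \cref{lem:D-chart-isomorphism}, replace $(F,G)$ and $(F',G')$ by $\bbG$-translates $(F_1,G_0)$ and $(F_1',G_0)$. This is possible because $D\neq0$ and every cubic with three distinct roots is $\lambda\,g\cdot G_0$. The translation does not change the values of the fourteen functions, and it changes the maps only within their conjugacy classes. By \cref{cor:cubic-chart-joint-generators}, these fourteen functions restrict on $\cS\simeq V_5$ to the generators \cref{eq:fourteen-slice-generators} of $k[V_5]^\Gamma$. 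Hence every element of $k[V_5]^\Gamma$ takes the same value at $F_1$ and at $F_1'$.

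Since $\Gamma$ is finite, invariants of $k[V_5]^\Gamma$ separate $\Gamma$-orbits: the quotient $V_5\to V_5/\Gamma$ has the orbits as fibres. So $F_1'=\rho(\gamma)F_1$ for some $\gamma\in\Gamma$. By \cref{lem:D-chart-isomorphism}, $\gamma$ comes from an element $(g,\lambda)\in\widetilde\Gamma\subseteq\bbG$ fixing $G_0$, so $(F_1',G_0)=\lambda\,g\cdot(F_1,G_0)$. Composing this with the earlier normalizations gives $(F',G')=\mu\,\widetilde h\cdot(F,G)$ with $\widetilde h\in\SL_2$ and $\mu\in k^\times$. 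By \cref{eq:cg-map}, the intertwining property of $\Theta_4$, and the projective ambiguity of $[f_0:f_1]$, $\psi$ is then conjugate to $\varphi$ by the image of $\widetilde h$ in $\PGL_2$.

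The main obstacle is to make sure that this ring-level isomorphism really translates equality of the fourteen values into equality of all of $k[V_5]^\Gamma$ at the two slice points. It requires evaluating regular functions on $X$ and restricting to the slice, which is what \cref{lem:D-chart-isomorphism} and \cref{eq:extra-slice-lift-identities} supply. One also has to check that the $\bbG$-translation to the slice respects the scalar conventions in \cref{eq:residual-action}. Note that, unlike \cref{cor:U-separation}, no $\delta\neq0$ hypothesis is needed, because the full generating set of the finite-group invariant ring is used.
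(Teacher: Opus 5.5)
Your proposal is correct and follows essentially the same route as the paper. Both arguments reduce to the slice $G=G_0$ via $\bbG$, use that the fourteen functions restrict to the generators of $k[V_5]^\Gamma$, invoke separation of $\Gamma$-orbits by a finite group's invariants, and lift back through $\widetilde\Gamma$. The paper packages this as the orbit correspondence coming from $X\simeq\bbG\times^{\widetilde\Gamma}V_5$ in \cref{lem:D-chart-isomorphism}, and you unwind that correspondence explicitly and also spell out the forward direction.
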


\begin{proof}
By \cref{eq:conjugation-action} and the projective ambiguity of $[f_0:f_1]$, the maps are conjugate if and only if their pairs lie in the same $\bbG$-orbit in $X$, with $\bbG$ and $X$ as in \cref{lem:D-chart-isomorphism}.  By that lemma, $X\simeq\bbG\times^{\widetilde\Gamma}V_5$, so $\bbG$-orbits in $X$ correspond to $\widetilde\Gamma$-orbits in $V_5$, and the isomorphism $\cA_D\simeq k[V_5]^\Gamma$ is compatible with this correspondence.  Invariants of a finite group separate its orbits in characteristic zero.  Hence $\cA_D$ separates $\bbG$-orbits in $X$, and by \cref{cor:cubic-chart-joint-generators} it is generated by the fourteen displayed functions.
\end{proof}

\subsection{The resultant-open moduli space}

For the full moduli problem the localizing invariant is $\Theta=\Delta_{F,G}$, of degree $8$, rather than the cubic discriminant.  Indeed,
\begin{equation}\label{eq:moduli-coordinate-ring}
 k[\cM_4^1]=\bigl(\cR[\Delta_{F,G}^{-1}]\bigr)_0.
\end{equation}
The formulas $U_i=P_i/(c_iD^{m_i})$ generate the function field but not the ring in \cref{eq:moduli-coordinate-ring}.  Indeed $U_1=700J^{(1)}_{1,3}/(3D)$, and $D$ is irreducible in $k[V_5\oplus V_3]$, does not divide $J^{(1)}_{1,3}$ by bidegree, and does not divide $\Delta_{F,G}$, since $(F,x^3)$ is a rational map for generic $F$.  Hence $J^{(1)}_{1,3}\Delta_{F,G}^m\notin(D)$ for every $m$, and $U_1\notin(\cR[\Delta_{F,G}^{-1}])_0$.  By \cref{cor:no-parameter-generation}, no system of parameter quotients generates it either.  Instead, \cref{thm:minimal-generators} gives a complete description.

\begin{theorem}
\label{thm:moduli-coordinate-ring}
Let $J_1,\ldots,J_{50}$ be the generators of \cref{app:generators}, of degrees $d_1,\ldots,d_{50}$, and let
\begin{equation}\label{eq:moduli-semigroup}
 \Sigma=\Bigl\{(a,m)\in\N^{50}\times\N:
 \sum_{i=1}^{50}d_ia_i=8m\Bigr\}.
\end{equation}
Then
\begin{equation}\label{eq:moduli-ring-generators}
 k[\cM_4^1]=k\Bigl[\,J^{a}\Delta_{F,G}^{-m}:(a,m)\in
 \Hilb(\Sigma)\Bigr],
\end{equation}
and $\Hilb(\Sigma)$ has $5411$ elements.  Writing the generators by the residue of their degree modulo $8$, the classes $0,2,4,6$ contain $16,15,10,9$ generators, and the irreducible monomials are those of the following seven shapes:
\begin{equation}\label{eq:moduli-hilbert-basis-count}
\begin{array}{l|ccccccc}
 \text{residues of the factors}&
 0&4\,4&2\,6&2\,2\,4&6\,6\,4&2\,2\,2\,2&6\,6\,6\,6\\ \hline
 \text{number}&
 16&55&135&1200&450&3060&495
\end{array}
\end{equation}
\end{theorem}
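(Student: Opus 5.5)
The plan is to reduce the statement to a purely combinatorial fact about monomials in the fifty generators, and then count. By \cref{thm:minimal-generators}, $\cR=k[J_1,\ldots,J_{50}]$, so $\cR[\Delta_{F,G}^{-1}]$ is spanned by the Laurent-type monomials $J^a\Delta_{F,G}^{-m}$ with $a\in\N^{50}$ and $m\in\N$. Its degree-zero part is spanned by those with $\sum_i d_ia_i=8m$, that is, by $J^a\Delta_{F,G}^{-m}$ with $(a,m)\in\Sigma$. Here we use that $\Delta_{F,G}$ is homogeneous of degree $8$. No assumption that $\Delta_{F,G}$ is itself one of the $J_i$ is needed; it is simply some element of $\cR_8$ expressed in the generators as in \cref{app:resultant}. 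Combined with \cref{eq:moduli-coordinate-ring}, this identifies $k[\cM_4^1]$ with the $k$-span of the images of $\Sigma$ under the monoid map $(a,m)\mapsto J^a\Delta_{F,G}^{-m}$. Since $\Sigma$ is an affine semigroup, Gordan's lemma shows that every element is a sum of elements of $\Hilb(\Sigma)$. The map is multiplicative, so \cref{eq:moduli-ring-generators} follows exactly as in part~(i) of \cref{thm:hilbert-semigroup-reduction}.

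For the count, observe that $m=\sum_i d_ia_i/8$ is determined by $a$. Hence $(a,m)\in\Sigma$ decomposes nontrivially if and only if the multiset of factors encoded by $a$ contains a proper nonempty sub-multiset whose total degree is divisible by $8$. So $\Hilb(\Sigma)$ is in bijection with the multisets of generators whose degrees sum to $0\bmod 8$ and which have no proper nonempty zero-sum sub-multiset. All $d_i$ are even by \cref{rem:parity}, so the residues lie in $2\Z/8\Z\simeq\Z/4\Z$, and the problem becomes listing the minimal zero-sum sequences over $\Z/4\Z$. Writing residues $0,2,4,6$ as $0,1,2,3$, these are
\[
 \{0\},\ \{2,2\},\ \{1,3\},\ \{1,1,2\},\ \{3,3,2\},\ \{1,1,1,1\},\ \{3,3,3,3\}.
\]
To confirm the list is complete, any sequence containing $0$ must be $\{0\}$. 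A sequence with two $2$'s must be $\{2,2\}$. A sequence containing both $1$ and $3$ must be $\{1,3\}$. The remaining sequences use only $1$'s (plus at most one $2$) or only $3$'s (plus at most one $2$), and the sum condition forces exactly the listed shapes. This yields the seven shapes of \cref{eq:moduli-hilbert-basis-count}.

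Next I read the class sizes off \cref{eq:bidegree-table}. Residue $0$ consists of degrees $8$ and $16$, giving $15+1=16$ generators. Residue $2$ consists of degrees $10$ and $18$, giving $14+1=15$. Residue $4$ consists of degrees $4$ and $12$, giving $6+4=10$. Residue $6$ consists of degrees $6$ and $14$, giving $7+2=9$. Counting multisets with repetition gives the following numbers for the seven shapes:
\[
16,\quad \tbinom{11}{2}=55,\quad 15\cdot 9=135,\quad \tbinom{16}{2}\cdot10=1200,\quad \tbinom{10}{2}\cdot10=450,\quad \tbinom{18}{4}=3060,\quad \tbinom{12}{4}=495.
\]
These sum to $5411$.

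The step I expect to need the most care is the bijection between irreducible elements and minimal zero-sum multisets. In particular, one must check that a decomposition in $\Sigma$ may split equal generators between the two summands; this is why multisets, and not sets, are the correct objects. One must also check that the $m$-coordinate never obstructs a splitting: it is always recovered as total degree divided by $8$, and it is nonnegative because every $d_i>0$.
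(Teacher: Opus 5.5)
Your proposal is correct and follows the paper's proof essentially step for step: the monomials $J^a\Delta_{F,G}^{-m}$ with $(a,m)\in\Sigma$ span $(\cR[\Delta_{F,G}^{-1}])_0$, the algebra is generated by the images of the Hilbert basis, and irreducible elements correspond to minimal zero-sum sequences over $\Z/4$ for the halved degrees. Your explicit counts $16,\ \binom{11}{2},\ 15\cdot9,\ \binom{16}{2}\cdot10,\ \binom{10}{2}\cdot10,\ \binom{18}{4},\ \binom{12}{4}$ are exactly what the paper summarizes as ``counting multisets of generators realizing each shape,'' and they do sum to $5411$.
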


\begin{proof}
By \cref{thm:minimal-generators}, $\cR=k[J_1,\ldots,J_{50}]$, and by \cref{app:resultant}, $\Delta_{F,G}$ is a polynomial in the $J_i$.  Hence $\cR[\Delta_{F,G}^{-1}]$ is spanned by the monomials $J^a\Delta_{F,G}^{-m}$ with $m\in\Z$, and the degree-zero ones have $m\geq0$ because $\sum d_ia_i\geq0$.  Thus $(\cR[\Delta_{F,G}^{-1}])_0$ is spanned by the monomials $J^a\Delta_{F,G}^{-m}$ with $(a,m)\in\Sigma$, hence generated as a $k$-algebra by those with $(a,m)$ in the Hilbert basis of $\Sigma$.  It is a homomorphic image of the semigroup ring of $\Sigma$.  An element of $\Sigma$ is irreducible if and only if the multiset of degrees $d_i$, counted with multiplicity $a_i$, is a minimal zero-sum sequence modulo $8$.  All $d_i$ are even, so this is a minimal zero-sum sequence in $\Z/4$ for the halved degrees, whose classes $0,1,2,3$ correspond to the residues $0,2,4,6$ modulo $8$.  The minimal zero-sum sequences in $\Z/4$ are $0$, $2\,2$, $1\,3$, $1\,1\,2$, $3\,3\,2$, $1\,1\,1\,1$, and $3\,3\,3\,3$.  Counting multisets of generators realizing each shape gives \cref{eq:moduli-hilbert-basis-count}, whose entries add up to $5411$.
\end{proof}

\begin{corollary}[Separation by absolute invariants]
\label{cor:separation-moduli}
Two degree-four rational maps are conjugate if and only if the $5411$ generators \cref{eq:moduli-ring-generators} of $k[\cM_4^1]$ take the same values at both.
\end{corollary}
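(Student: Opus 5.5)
The plan has two directions, and both reduce to orbit-closure statements for the group $\bbG=\SL_2\times\bbG_m$ acting on the open set $Y=\{\Delta_{F,G}\neq0\}\subset V_5\oplus V_3$. By \cref{eq:conjugation-action} and the projective ambiguity of $[f_0:f_1]$, as in the proof of \cref{cor:separation-D-chart}, two maps $\varphi,\psi$ are conjugate if and only if their Clebsch--Gordan pairs lie in the same $\bbG$-orbit in $Y$.

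\emph{Forward direction.} Each generator $J^a\Delta_{F,G}^{-m}$ with $(a,m)\in\Hilb(\Sigma)$ is a quotient of $\SL_2$-invariants of equal total degree $8m$. It is therefore constant on $\bbG$-orbits in $Y$, and conjugate maps give equal values.

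\emph{Converse.} Suppose all $5411$ generators agree at $P=(F,G)$ and $P'=(F',G')$. By \cref{thm:moduli-coordinate-ring} they generate $k[\cM_4^1]=(\cR[\Delta_{F,G}^{-1}])_0$, so every degree-zero element of $\cR[\Delta_{F,G}^{-1}]$ takes the same value at $P$ and $P'$. Since $\Delta_{F,G}$ has degree $8$, rescale $P'$ by a scalar $\lambda$ with $\lambda^8\Delta(P')=\Delta(P)$; such a $\lambda$ exists because $k$ is algebraically closed. After this rescaling, for every homogeneous $I\in\cR$ of degree $n$ we need $I(P)=I(\lambda P')$.

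\begin{itemize}
\item If $8\mid n$, this follows from $I/\Delta^{n/8}$.
\item In general, $I^8/\Delta^{n}$ is a generator expression, so $I(P)^8=I(\lambda P')^8$. This gives agreement only up to an eighth root of unity, and that is the main obstacle.
\end{itemize}

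To resolve it, use the finite group $\mu_8$ of remaining rescalings $\zeta$ with $\zeta^8=1$, which preserve $\Delta$. Define a homomorphism $\mu_8\to k^\times$ as follows. Consider the character on degree-$n$ invariants comparing $I(P)$ with $I(\lambda P')$. For two homogeneous invariants $I,I'$ of degrees $n,n'$ with $n+n'\equiv0\pmod 8$, the product $II'/\Delta^{(n+n')/8}$ is a regular absolute invariant. Hence the ratio $\epsilon_n=I(\lambda P')/I(P)$ satisfies $\epsilon_n\epsilon_{n'}=1$ whenever both are defined. The same argument with products of several invariants of total degree divisible by $8$ shows that $\epsilon$ is a multiplicative character on the grading group $2\Z/8\Z$ of values attained at $P$.

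Because all degrees are even (\cref{rem:parity}), such a character has the form $n\mapsto\zeta^n$ for some $\zeta$ with $\zeta^8=1$. Replacing $\lambda$ by $\lambda\zeta^{-1}$ then makes $I(P)=I(\lambda P')$ for all homogeneous $I$ with $I(P)\neq0$. For $I$ with $I(P)=0$, the eighth-power identity already forces $I(\lambda P')=0$.

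With this normalization, $P$ and $\lambda P'$ take equal values on all of $\cR$. By \cref{rem:stability}, both $\SL_2$-orbits are closed, since both pairs have nonzero resultant. Invariants of the reductive group $\SL_2$ separate closed orbits in characteristic zero, so the two orbits coincide, exactly as in the proof of \cref{cor:separation}. Hence $\varphi$ and $\psi$ are conjugate.

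Alternatively, one can shortcut the character argument by noting that $\cM_4^1=Y/\!/\bbG$ is a geometric quotient, because all points of $Y$ are stable. Its coordinate ring $k[\cM_4^1]$ therefore separates points of $\cM_4^1$, and the statement follows directly from \cref{thm:moduli-coordinate-ring}. I would present this cleaner argument and keep the explicit $\mu_8$ bookkeeping only as a check.
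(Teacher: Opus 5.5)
Your closing ``cleaner argument'' is the paper's proof: \cref{rem:stability} makes every point of $\Rat_4^1$ stable, so $\cM_4^1$ is a geometric quotient whose coordinate ring separates conjugacy classes, and \cref{thm:moduli-coordinate-ring} supplies generators of that ring. The $\mu_8$ argument you develop first is also correct, and it is a genuinely different route. Instead of quoting the GIT fact that $(\cR[\Delta_{F,G}^{-1}])_0$ separates $\bbG$-orbits in $\{\Delta_{F,G}\neq0\}$, you prove it directly. After rescaling $P'$ by $\lambda$ and then by $\zeta^{-1}$, the two pairs agree on every homogeneous invariant, and you finish with the separation of closed $\SL_2$-orbits exactly as in \cref{cor:separation}. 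In effect you show that the $5411$ values determine the weighted projective point $[J_1:\cdots:J_{50}]$ used there. The one step you should write out is that $\epsilon(I)=I(\lambda P')/I(P)$ depends only on $\deg I\bmod 8$. If $I,I'$ are nonzero at $P$ with $\deg I\equiv\deg I'\pmod 8$, then $I\,I'^{7}\Delta_{F,G}^{-(\deg I+7\deg I')/8}$ is a regular absolute invariant, so $\epsilon(I)\epsilon(I')^7=1$. Since $\epsilon(I')^8=1$, this gives $\epsilon(I)=\epsilon(I')$. After that, the existence of $\zeta$ with $\zeta^8=1$ follows as you say, because the attained degrees form a subgroup of the cyclic group $2\Z/8\Z$. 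The paper's route is shorter but rests on identifying points of $\Spec(\cR[\Delta_{F,G}^{-1}])_0$ with $\bbG$-orbits in the resultant-open set. Yours needs only the closedness of $\SL_2$-orbits in $V_5\oplus V_3$ and the classical separation theorem, so it makes that identification explicit.
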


\begin{proof}
By \cref{rem:stability} every point of $\Rat_4^1$ is stable, so $\cM_4^1$ is a geometric quotient and $k[\cM_4^1]$ separates conjugacy classes.  By \cref{thm:moduli-coordinate-ring} it is generated by the displayed functions.
\end{proof}

This is the degree-four form of the criterion for cubic maps in \cite{2024-04}, where the absolute invariants are the Veronese monomials $\xi_j^{12/w_j}/I_6^2$ and separate conjugacy classes off the locus where the nonzero weights have a common divisor.  The Hilbert basis of \cref{thm:moduli-coordinate-ring} replaces the Veronese monomials and removes that condition.

The generating set \cref{eq:moduli-ring-generators} is explicit but far from minimal.  A minimal generating set of $k[\cM_4^1]$ requires the relations among the $J_i$, which we have not computed.  The parameter system of \cref{thm:hsop} gives the coarser but structural description below.

\begin{proposition}
\label{prop:corrected-absolute-hilbert-basis}
Let $Q_1,\ldots,Q_7$ be as in \cref{thm:hsop}, let $\Theta=Q_5=\Delta_{F,G}$, and put $H_1=Q_1$, $H_2=Q_2$, $H_3=Q_3$, $H_4=Q_4$, $H_5=Q_6$, $H_6=Q_7$, of degrees $4,4,4,6,8,12$.  The semigroup \cref{eq:theta-semigroup} is
\begin{equation}\label{eq:quartic-absolute-semigroup}
 4(a_1+a_2+a_3)+6a_4+8a_5+12a_6=8m,
\end{equation}
and, with $I=\{1,2,3,6\}$ and $d_1=d_2=d_3=4$, $d_6=12$, its $16$ irreducible elements are
\begin{equation}\label{eq:sixteen-hilbert-basis-monomials}
 \frac{H_5}{\Delta_{F,G}},\quad
 \frac{H_iH_j}{\Delta_{F,G}^{(d_i+d_j)/8}} \; (i\leq j,\ i,j\in I),  \quad
 \frac{H_4^2H_i}{\Delta_{F,G}^{(12+d_i)/8}}  \; (i\in I),  \quad
 \frac{H_4^4}{\Delta_{F,G}^3}.
\end{equation}
These $16$ functions generate the normal domain $S_{\Delta}$, and $k[\cM_4^1]$ is a finite $S_\Delta$-module with $[k(\cM_4^1):\Frac(S_\Delta)]=604$.
\end{proposition}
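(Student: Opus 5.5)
The plan is to apply \cref{thm:hilbert-semigroup-reduction} to the homogeneous system of parameters of \cref{thm:hsop}, taking $\Theta=Q_5=\Delta_{F,G}$ and $H_1,\ldots,H_6$ as in the statement. By \cref{thm:hsop}, $V(\Theta,H_1,\ldots,H_6)=\cN_{5,3}$, so the hypothesis \cref{eq:theta-parameter-nullcone} holds. With $e=8$ and $(e_1,\ldots,e_6)=(4,4,4,6,8,12)$, the semigroup \cref{eq:theta-semigroup} is exactly \cref{eq:quartic-absolute-semigroup}. The remaining work splits into two parts: computing the Hilbert basis, and reading off (i)--(iii) of \cref{thm:hilbert-semigroup-reduction}.

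\emph{Hilbert basis.} For $(a,m)\in\Sigma_\Theta$ the integer $m$ is determined by $a$. Hence $(a,m)$ is irreducible if and only if the multiset of degrees, with $e_j$ taken $a_j$ times, is a \emph{minimal} zero-sum sequence modulo $8$ (the same reduction as in the proof of \cref{thm:moduli-coordinate-ring}). The residues modulo $8$ are $4,4,4,6,0,4$ for $H_1,\ldots,H_6$, so only the residues $0,4,6$ occur. I would classify the minimal zero-sum sequences over $\{0,4,6\}\subset\Z/8$ directly.
\begin{enumerate}[(a)]
\item A residue $0$ must occur alone. This gives $H_5/\Delta_{F,G}$.
\item Without the residue $6$, the sequence is exactly two $4$'s. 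This gives the $\binom{4+1}{2}=10$ products $H_iH_j$ with $i\leq j$ in $I$.
\item With the residue $6$: the number of $6$'s must be even, since $6k+4l\equiv0\pmod 8$ forces $6k\equiv0\pmod 4$. Two $6$'s sum to $4$, so they need exactly one $4$. Four $6$'s sum to $0$, and every proper subsum $6,4,2$ is nonzero, so this sequence is minimal. Any longer sequence contains one of these as a proper zero-sum subsequence.
\end{enumerate}
This yields $1+10+4+1=16$ elements. In each case the exponent of $\Delta_{F,G}$ is the total degree divided by $8$, which matches the exponents displayed in \cref{eq:sixteen-hilbert-basis-monomials}.

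\emph{Structural statements.} Part (i) of \cref{thm:hilbert-semigroup-reduction} shows that these $16$ monomials generate the normal domain $S_\Delta=(B[\Delta_{F,G}^{-1}])_0$. Part (ii), together with $k[\cM_4^1]=(\cR[\Delta_{F,G}^{-1}])_0$ from \cref{eq:moduli-coordinate-ring}, gives finiteness of $k[\cM_4^1]$ over $S_\Delta$. For the field degree I would use \cref{eq:theta-field-degree} with the following inputs:
\begin{itemize}
\item $\rank_B\cR=604$, by \cref{thm:hsop};
\item $g_{\cR}=2$, as shown in the proof of \cref{cor:no-parameter-generation};
\item $g_B=\gcd(4,4,4,6,8,8,12)=2$.
\end{itemize}
This gives $[\Frac(\cR)_0:\Frac(S_\Delta)]=604$, and $\Frac(\cR)_0=k(\cM_4^1)$ by \cref{eq:absolute-field}. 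The last sentence of \cref{cor:no-parameter-generation} already records this value.

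There is no real obstacle here. The only point needing care is the exhaustiveness of the zero-sum classification in the residue-$6$ case, namely the parity argument in (c) and the check that four $6$'s form a minimal sequence.
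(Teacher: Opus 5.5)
Your proposal is correct and follows the paper's proof: you reduce the companion degrees modulo $8$ to the residues $4,4,4,6,0,4$, classify the minimal zero-sum sequences to obtain $1+10+4+1=16$ elements, and then read off the structural claims from \cref{thm:hilbert-semigroup-reduction} with $\rank_B\cR=604$ and $g_{\cR}=g_B=2$, as in \cref{cor:no-parameter-generation}. Your parity argument for the number of residue-$6$ terms only reorganizes the exhaustiveness check, which the paper does by first discarding sequences containing a residue-$0$ term together with another term, or two residue-$4$ terms.
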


\begin{proof}
Reduce the six companion degrees modulo $8$.  The residues are $4,4,4,6,0,4$.  The residue-$0$ variable gives the singleton $H_5$.  Among the four residue-$4$ variables, every unordered pair is a minimal zero-sum sequence, giving $\binom{4+1}{2}=10$ elements.  The residue-$6$ variable has order $4$, giving $H_4^4$, and two copies of it have residue $4$.  Adjoining any one of the four residue-$4$ variables gives the remaining four minimal sequences $H_4^2H_i$.

Every zero-sum sequence containing the residue-$0$ variable and another term is reducible.  A sequence containing at least two residue-$4$ terms contains a zero-sum pair.  After removing these cases, it contains at most one residue-$4$ term, and the congruence then forces either four residue-$6$ terms or one residue-$4$ term and two residue-$6$ terms.  Hence the displayed list is exhaustive and every member is irreducible.  The powers of $\Delta_{F,G}$ are the total degrees divided by $8$.  The remaining assertions are \cref{thm:hilbert-semigroup-reduction} and \cref{cor:no-parameter-generation}.
\end{proof}

This is the analogue of the equation $a+2b=3c+4d$ used for $V_3\oplus V_3$ in \cite[Lemma~4.5]{2004-3}.  The difference is that here the parameter subring has index $604$ in the coordinate ring, whereas for pairs of cubics the corresponding index is $1$.

\subsection{The cubic-discriminant divisor}\label{subsec:D-zero}

Let $Y\subset\cM_4^1$ be the closed subset of classes of maps whose cubic component $G$ has a repeated root, that is, the image of $D=0$.  Since $D$ is irreducible, $Y$ is an irreducible divisor.  The coordinates $U_1,\ldots,U_6$ have poles along $Y$, and \cref{cor:U-separation} says nothing about it.  We now construct absolute invariants regular on the open set $A\neq0$, $A=J^{(1)}_{1,3}$, which play on $Y$ the role played by the $U_i$ on $D\neq0$: five of them generate $k(Y)$ and separate the classes in $Y$ on an explicit dense open set.

Put $G_1=x^2y$ and let $\cS_1=\{(F,G_1):F\in V_5\}\simeq V_5$ be the double-root slice.  Let $\Gamma_1\subseteq\bbG=\SL_2\times\bbG_m$ be the stabilizer of $G_1$.  An element $(g,\lambda)$ with $\lambda\,g\cdot G_1=G_1$ preserves the root divisor of $G_1$, hence fixes $(0:1)$ and $(1:0)$, so $g=\diag(s,s^{-1})$ and $\lambda=s$.  Thus $\Gamma_1\simeq\bbG_m$, and it acts on the slice by
\[
 a_i\longmapsto s^{\,6-2i}a_i\qquad(0\leq i\leq5),
\]
where $F=\sum a_ix^iy^{5-i}$.  The weights are $6,4,2,0,-2,-4$, and $s=-1$ acts trivially.  Every cubic with exactly one double root is $\bbG$-equivalent to $G_1$, since $\PGL_2$ is doubly transitive, and two points of $\cS_1$ lie in the same $\bbG$-orbit if and only if they lie in the same $\Gamma_1$-orbit.  Hence the open subset $Y^\circ\subset Y$ of classes with exactly one double root is $\cS_1/\Gamma_1$, and $k(Y)=k(V_5)^{\Gamma_1}$.

The restriction of a bihomogeneous invariant of bidegree $(a,b)$ to $\cS_1$ is a polynomial in $a_0,\ldots,a_5$ of degree $a$ which $\Gamma_1$ multiplies by $s^{-(a+b)}$.  In particular it is a sum of monomials $\prod a_i^{e_i}$ with $\sum(6-2i)e_i=-(a+b)$.  Direct expansion gives
\[
 A(F,G_1)=-\tfrac{2}{45}\,a_1,\qquad L(F,G_1)=\tfrac{4}{243}\,a_0,
\]
with $A=J^{(1)}_{1,3}$ and $L=J^{(1)}_{1,5}$ as in \cref{eq:absolute-shorthand}, so $A$ vanishes on $Y$ exactly where the slice representative has $a_1=0$.  Moreover $A$ vanishes identically when $G$ is a cube or zero, because then $h=0$.  Define, with $B_1,B_2,N,T_j$ as in \cref{eq:absolute-shorthand},
\begingroup\small
\begin{equation}\label{eq:V-numerators}
\begin{split}
 \widetilde P_1&=-15795T_1+106920T_2+40040T_3-69498AB_1-12870AB_2,\\
 \widetilde P_2&=-15795T_1+106920T_2+40040T_3+11583AB_1+77220AB_2,\\
 \widetilde P_3&=-15795T_1+106920T_2+40040T_3+92664AB_1-283140AB_2,\\
 \widetilde P_4&=114075T_1+59400T_2+2002T_3+51480AB_1-32175AB_2,\\
 \widetilde P_5&=9N+2A^2,
\end{split}
\end{equation}
\endgroup
all of degree $8$, and put
\begin{equation}\label{eq:V-coordinates}
 (\tilde c_1,\ldots,\tilde c_5)=\bigl(81081,\tfrac{81081}{2},810810,
 \tfrac{2027025}{2},5\bigr),\qquad
 V_j=\frac{\widetilde P_j}{\tilde c_j\,A^{2}}\quad(1\leq j\leq5).
\end{equation}
\begin{theorem}
\label{thm:V-coordinates}
The $V_j$ are absolute invariants, regular on $A\neq0$, and on the double-root slice
\begin{equation}\label{eq:V-restrictions}
 (V_1,V_2,V_3,V_4,V_5)\big|_{\cS_1}
 =\Bigl(a_3,\ \frac{a_2^2}{a_1},\ \frac{a_0a_4}{a_1},\
        \frac{a_0^2a_5}{a_1^2},\ \frac{a_0a_2}{a_1^2}\Bigr).
\end{equation}
Their restrictions to $Y$ satisfy
\(
 k(Y)=k(V_1,\ldots,V_5),
\)
so $Y$ is rational and the $V_j$ are algebraically independent on $Y$.  Let $\varphi,\psi$ be degree-four rational maps with $D=0$, $A\neq0$, and $V_5\neq0$.  Then $\varphi$ and $\psi$ are conjugate if and only if $V_j(\varphi)=V_j(\psi)$ for $1\leq j\leq5$.
\end{theorem}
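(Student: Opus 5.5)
The plan is to check the four assertions in order: invariance and regularity, the slice restrictions \cref{eq:V-restrictions}, the function field of $Y$, and separation. The only genuinely computational step is the second.

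\emph{Invariance and regularity.} By \cref{eq:absolute-shorthand}, $T_j=J^{(j)}_{3,5}$ has bidegree $(3,5)$, each $AB_j$ has bidegree $(1,3)+(2,2)=(3,5)$, and $N$ and $A^2$ have bidegree $(2,6)$. So each $\widetilde P_j$ is a homogeneous invariant of total degree $8$, equal to the degree of $A^2$. Hence $V_j$ is a quotient of invariants of equal degree, invariant under $\SL_2$ and common rescaling, and regular wherever $A\neq0$.

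\emph{Slice restrictions.} By the weight rule stated before the theorem, the restriction of an invariant of bidegree $(a,b)$ to $\cS_1$ is a combination of degree-$a$ monomials in $a_0,\ldots,a_5$ of fixed $\Gamma_1$-weight of absolute value $a+b=8$ (the weights being $6-2i$).
\begin{itemize}
\item In degree $3$ the admissible monomials are exactly $a_1^2a_3$, $a_1a_2^2$, $a_0a_1a_4$ and $a_0^2a_5$.
\item In degree $2$ the admissible monomials are $a_1^2$ and $a_0a_2$.
\end{itemize}
I would expand the five restrictions $T_1,T_2,T_3,AB_1,AB_2$ to $\cS_1$ and the two restrictions $N$, $A^2$; this is the main computational obstacle, though each is only a vector in a $4$- or $2$-dimensional space. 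Using $A|_{\cS_1}=-\tfrac{2}{45}a_1$, I would then verify that the combinations \cref{eq:V-numerators}, divided by $\tilde c_jA^2$, isolate the single monomials $a_1^2a_3$, $a_1a_2^2$, $a_0a_1a_4$, $a_0^2a_5$ and $a_0a_2$. This gives \cref{eq:V-restrictions}.

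\emph{Function field.} As shown before the theorem, $k(Y)=k(V_5)^{\Gamma_1}$, where $\Gamma_1\simeq\bbG_m$ acts diagonally. The invariant field of a diagonal torus is generated by the Laurent monomials whose exponents lie in the kernel lattice
\[
 \Lambda=\{e\in\Z^6:\textstyle\sum(6-2i)e_i=0\},
\]
which has rank $5$. The exponent vectors of the five restrictions are
\[
 (0,0,0,1,0,0),\quad (0,-1,2,0,0,0),\quad (1,-1,0,0,1,0),\quad (2,-2,0,0,0,1),\quad (1,-2,1,0,0,0).
\]
I would check that they form a $\Z$-basis of $\Lambda$. Project away $e_1$, which is determined on $\Lambda$ by $e_1=-(3e_0+e_2-e_4-2e_5)/2$. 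The image of $\Lambda$ is the index-$2$ sublattice $\{3e_0+e_2\ \text{even}\}$ of $\Z^5$. The projected vectors have determinant $\pm2$, so they span this image. Hence $k(Y)=k(V_1,\ldots,V_5)$, $Y$ is rational, and since $\trdeg k(Y)=5$, the $V_j$ are algebraically independent.

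\emph{Separation.} Since $A$ vanishes when $G$ is a cube or zero, the conditions $D=0$ and $A\neq0$ force $G$ to have exactly one double root. As in \cref{lem:D-chart-isomorphism}, two such maps are conjugate if and only if their slice representatives in $\cS_1$ lie in the same $\Gamma_1$-orbit. Since $a_1\neq0$, choose $s$ with $s^4a_1=1$; the residual group is $s\in\{\pm1,\pm i\}$, where $s=\pm1$ acts trivially and $s=i$ changes the signs of $a_0,a_2,a_4$. With $a_1=1$, the values $V_j$ give $a_3$, $a_2^2$, $a_0a_4$, $a_0^2a_5$ and $a_0a_2$. Since $V_5=a_0a_2\neq0$, $a_2$ is determined up to sign, and the sign is absorbed by $s=i$. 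Then $a_0=V_5/a_2$, $a_4=V_3/a_0$ and $a_5=V_4/a_0^2$ are determined. So equal $V_j$ force equal $\Gamma_1$-orbits, and the converse holds by invariance.
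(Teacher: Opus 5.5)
Your proof follows the paper's route step for step. That route is: the bidegree count for invariance, expansion of the restrictions in a fixed-weight monomial space, a lattice-basis argument for the torus invariant field, and normalization $a_1=1$ with residual group $\mu_4$ for separation. Checking the basis by deleting the $e_1$-coordinate is a harmless variant of the paper's ``the $5\times5$ minors have gcd $1$''; the minor obtained by deleting the $e_1$-column is $\pm2$, the coefficient of $e_1$ in $\sum(3-i)e_i$. The separation argument is correct and coincides with the paper's. Two computational slips need correcting, though.

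\emph{The weight-$8$ space in degree $3$ is five-dimensional, not four-dimensional.} With weights $6-2i$, the monomial $a_ia_ja_k$ has weight $18-2(i+j+k)$, so weight $8$ means $i+j+k=5$. This also allows $(0,2,3)$, i.e.\ $a_0a_2a_3$. The basis is therefore $a_1a_2^2,\ a_1^2a_3,\ a_0a_2a_3,\ a_0a_1a_4,\ a_0^2a_5$, and the five restrictions of $T_1,T_2,T_3,AB_1,AB_2$ form a basis of this space, as the paper states. This matters for your verification. It is not enough to match the coefficients of the four monomials you listed. You must also check that the $a_0a_2a_3$-coefficient of each of $\widetilde P_1,\ldots,\widetilde P_4$ vanishes; otherwise, for instance, $V_1|_{\cS_1}$ could be $a_3+c\,a_0a_2a_3/a_1^2$ rather than $a_3$.

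\emph{The image of $\Lambda$ is misdescribed.} After deleting $e_1$, the image is not $\{3e_0+e_2\ \text{even}\}$. Integrality of $e_1=-(3e_0+e_2-e_4-2e_5)/2$ requires $e_0+e_2+e_4$ to be even. Your stated lattice does not even contain $(1,0,0,1,0)$, the projection of your own third vector. With the correct description your argument goes through unchanged, since only the index is used:
\begin{itemize}
\item the projection is injective on $\Lambda$;
\item its image has index $2$ in $\Z^5$;
\item your five projected vectors lie in that image and span a sublattice of index $|\det|=2$, so they span the whole image.
\end{itemize}
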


\begin{proof}
Each $\widetilde P_j$ is a bihomogeneous invariant of degree $8$, of bidegree $(3,5)$ for $j\leq4$ and $(2,6)$ for $j=5$, and $A^2$ has bidegree $(2,6)$, so the $V_j$ are absolute invariants regular where $A\neq0$.  The restrictions of the five monomials $T_1,T_2,T_3,AB_1,AB_2$ to $\cS_1$ span the five-dimensional space of polynomials of degree $3$ and weight $-8$, with basis $a_1a_2^2$, $a_1^2a_3$, $a_0a_2a_3$, $a_0a_1a_4$, $a_0^2a_5$, and the restrictions of $N$ and $A^2$ span the space with basis $a_1^2$ and $a_0a_2$.  Solving the two linear systems gives \cref{eq:V-numerators}, and \cref{eq:V-restrictions} follows on dividing by $A^2|_{\cS_1}=\tfrac{4}{2025}a_1^2$.  These are identities of polynomials in $a_0,\ldots,a_5$, verified as in \cref{app:slice-ring-lifts}.

The invariant field of a torus acting diagonally on $V_5$ is generated by the Laurent monomials of weight zero: a rational invariant is a quotient of two semi-invariant polynomials of the same weight, and dividing both by a monomial of that weight expresses it through weight-zero Laurent monomials.  The weight-zero Laurent monomials form the lattice $\{e\in\Z^6:\sum(3-i)e_i=0\}$ of rank $5$, and the field they generate is generated by any lattice basis.  The exponent vectors of the five functions in \cref{eq:V-restrictions} are
\[
 (0,0,0,1,0,0),\ (0,-1,2,0,0,0),\ (1,-1,0,0,1,0),\ (2,-2,0,0,0,1),\
 (1,-2,1,0,0,0),
\]
whose $5\times5$ minors have greatest common divisor $1$, so they form a basis of that lattice, and $k(V_5)^{\Gamma_1}=k(v_1,\ldots,v_5)$ with $v_j=V_j|_{\cS_1}$.  Since $k(Y)=k(V_5)^{\Gamma_1}$ has transcendence degree $5=\dim Y$, the $v_j$ are algebraically independent.

For separation, let $F,F'\in\cS_1$ have $a_1,a_1'\neq0$ and equal $v_j$.  The condition $D=0$, $A\neq0$ places both maps in $Y^\circ$, so it suffices to show that $F$ and $F'$ lie in the same $\Gamma_1$-orbit.  Acting by $s$ with $s^4a_1=1$ normalizes $a_1=1$.  The residual group is $\mu_4$, which acts through $s^2=\pm1$ by $(a_0,a_2,a_4)\mapsto\pm(a_0,a_2,a_4)$ and fixes $a_3,a_5$.  With $a_1=a_1'=1$ the hypotheses read $a_3=a_3'$, $a_2^2=a_2'^2$, $a_0a_4=a_0'a_4'$, $a_0^2a_5=a_0'^2a_5'$, $a_0a_2=a_0'a_2'\neq0$.  Hence $a_2'=\pm a_2$.  After applying the residual sign we may assume $a_2'=a_2$, and then $a_0'=a_0$, $a_4'=a_4$, $a_5'=a_5$.  Thus $F'=F$.
\end{proof}

The hypothesis $V_5\neq0$ is exact.  If $a_0=0$ the values \cref{eq:V-restrictions} are $(a_3,a_2^2/a_1,0,0,0)$ and do not see $a_4,a_5$.  If $a_2=0$ they are $(a_3,0,a_0a_4/a_1,a_0^2a_5/a_1^2,0)$ and are constant along the curve $(ta_0,a_1,0,a_3,t^{-1}a_4,t^{-2}a_5)$, whose points with $a_1=1$ are conjugate only for $t'=\pm t$.

The ring-level statement parallels \cref{thm:cubic-chart-ring-generators}.  On the open set $a_1\neq0$ of $\cS_1$, normalization to $a_1=1$ identifies the $\Gamma_1$-orbits with the orbits of $\mu_2$ acting by sign on $(a_0,a_2,a_4)$ and trivially on $(a_3,a_5)$, so the invariant ring of $\Gamma_1$ on $\{a_1\neq0\}$ is generated by the eight functions
\begin{equation}\label{eq:eight-D-zero}
 a_3,\quad \frac{a_2^2}{a_1},\quad \frac{a_0a_4}{a_1},\quad
 \frac{a_0a_2}{a_1^2},\quad \frac{a_0^2}{a_1^3},\quad
 a_1a_5,\quad a_2a_4,\quad a_1a_4^2,
\end{equation}
and these separate all $\Gamma_1$-orbits with $a_1\neq0$.  The first five are the restrictions of $V_1,V_2,V_3,V_5$ and of $-\tfrac{81}{250}L^2/A^3$.  The last three are restrictions of elements of $k[\cM_4^1][A^{-1}]$, since $Y\cap\{A\neq0\}$ is closed in the affine variety $\cM_4^1\cap\{A\neq0\}$ and restriction is surjective. The lowest bidegrees in which $a_1^{k+1}a_5$ and $a_1^ka_2a_4$ occur as restrictions of invariants are $(6,10)$, and we do not display these expressions.  Consequently two maps with $D=0$ and $A\neq0$ are conjugate if and only if the eight functions \cref{eq:eight-D-zero} agree, and \cref{cor:separation-moduli} remains the statement of record on the complement, which consists of the classes in $Y$ with $a_1=0$ on the slice, the classes whose cubic is a cube, and the classes with $G=0$.  The last are the classes of binary quintics with nonzero discriminant, which are separated by the weighted point $[c_{4,0}:c_{8,0}:c_{12,0}:c_{18,0}]$.

\subsection{An algorithm for conjugacy}\label{subsec:deciding-conjugacy}

We collect the separation statements into an algorithm which decides, for two degree-four rational maps given by their coefficients, whether they are conjugate.  An implementation is described in \cref{app:implementation}.  All functions involved are rational functions with coefficients in $\Q$ of the ten coefficients of $(f_0,f_1)$: the pair $(F,G)$ is linear in them by \cref{eq:cg-map}, every $J_i$ is a polynomial in the coefficients of $(F,G)$, and every absolute invariant is a quotient of such polynomials.

For $f_0=\sum_{i=0}^4a_ix^iy^{4-i}$ and $f_1=\sum_{i=0}^4b_ix^iy^{4-i}$ one has
\[
\begin{split}
 F&=a_0y^5+(a_1-b_0)xy^4+(a_2-b_1)x^2y^3+(a_3-b_2)x^3y^2+(a_4-b_3)x^4y-b_4x^5,\\
 G&=(a_1+4b_0)y^3+(2a_2+3b_1)xy^2+(3a_3+2b_2)x^2y+(4a_4+b_3)x^3.
\end{split}
\]
Write $G=\sum_{j=0}^3g_jx^jy^{3-j}$ and $F=\sum_{i=0}^5\alpha_ix^iy^{5-i}$.  Then $D=\tfrac{2}{27}\disc(G)$, where $\disc(G)=g_1^2g_2^2-4g_0g_2^3-4g_1^3g_3+18g_0g_1g_2g_3 -27g_0^2g_3^2$, and the first coordinate is
\(
 U_1=-\frac{140\,S}{\disc(G)},
\)
where
\[
\begin{split}
 S={}&15\alpha_0g_1g_3^2-5\alpha_0g_2^2g_3-9\alpha_1g_0g_3^2-2\alpha_1g_1g_2g_3     +\alpha_1g_2^3 +6\alpha_2g_0g_2g_3+\alpha_2g_1^2g_3-\alpha_2g_1g_2^2\\
    &-6\alpha_3g_0g_1g_3-\alpha_3g_0g_2^2+\alpha_3g_1^2g_2+9\alpha_4g_0^2g_3+2\alpha_4g_0g_1g_2-\alpha_4g_1^3-15\alpha_5g_0^2g_2+5\alpha_5g_0g_1^2 .
\end{split}
\]

The reconstruction by the six coordinates is valid on $\delta\neq0$, that is, where $U_2^3/2-27U_3^2\neq0$.  On the locus $\delta=0$ one must use additional regular invariants.  For later use we also record a smaller generating system on the $\Gamma$-stable hyperplane $\eta=0$.

\begin{proposition}
\label{prop:eta-zero-separation}
Let $\varphi$ and $\psi$ be degree-four rational maps with $D\neq0$ whose slice representatives satisfy $\eta=0$.  Then $\varphi$ and $\psi$ are conjugate if and only if the eight functions
\[
 U_1,\ U_2,\ U_3,\ U_4,\ U_5,\  \frac{Q_{p_{02}}}{D^{3}},\ \frac{Q_{p_{12}}}{D^{4}},\ \frac{Q_{p_{03}}}{D^{4}}
\]
take the same values at $\varphi$ and $\psi$.
\end{proposition}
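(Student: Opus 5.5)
The plan is to reduce to the normalized slice, as in \cref{cor:separation-D-chart}, and then to settle a separation question for a finite group acting on the hyperplane $\eta=0$. Since $D\neq0$, \cref{lem:D-chart-isomorphism} applies. Its proof identifies conjugacy classes of maps in $X$ with $\Gamma$-orbits in $V_5$, acting through $\rho$, and the identification is compatible with restriction of absolute invariants. The hyperplane $\eta=0$ is $\Gamma$-stable, because $\eta$ transforms by the sign character. So it suffices to show that two slice points $F,F'$ with $\eta(F)=\eta(F')=0$ lie in the same $\Gamma$-orbit whenever the restrictions of the eight functions agree. The forward implication is immediate, since all eight functions lie in $\cA_D$.

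Next I identify the restrictions. By \cref{eq:slice-restriction-identities} and \cref{eq:field-generators-inside-fourteen}, $U_1,\dots,U_5$ restrict to $t,\ p_{20},\ p_{30}/3,\ p_{11},\ p_{21}$. By \cref{eq:extra-slice-lift-identities}, the three quotients $Q_{p_{02}}/D^3$, $Q_{p_{12}}/D^4$, $Q_{p_{03}}/D^4$ restrict to $p_{02},p_{12},p_{03}$. Here I take the exponents $m_g$ from \cref{app:slice-ring-lifts}; they match total degree $4m_g$ against $F$-degrees $2,3,3$. Since $\sum x_i=0$, one has $x_0x_1x_2=p_{30}/3$. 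On $\eta=0$, the coordinates $(A_i,B_i)$ are recovered linearly from $t$, $(x_i)$ and $(y_i)$: the $X_i$ come from $x_i$ and $t$, and the $E_i$ come from the $y_i$ together with $\sum E_i=\eta=0$, since \cref{eq:y-coordinates} is invertible on the plane $\sum E_i=0$. Hence $\{\eta=0\}\simeq k\oplus W\oplus W$, where $W=\{w\in k^3:\sum w_i=0\}$ is the standard $S_3$-representation, $x,y\in W$ are permuted simultaneously, and $t$ is fixed.

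The core step is that the multisymmetric power sums $p_{ab}$ with $1\le a+b\le3$ generate $k[W\oplus W]^{S_3}$. I would obtain this from the classical theorem, valid in characteristic zero, that the polarized power sums $p_{ab}$ with $a+b\le n$ generate the ring of multisymmetric functions of $n$ points in $k^2$; here $n=3$. Restricting along the surjection $k^3\oplus k^3\to W\oplus W$, which is $S_3$-equivariant and split, gives surjectivity on invariants by the Reynolds operator. Since $p_{10}=p_{01}=0$ on $W\oplus W$, the ring is generated by $p_{20},p_{11},p_{02},p_{30},p_{21},p_{12},p_{03}$.

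As a consistency check, \cref{thm:cubic-chart-ring-generators} at $\eta=0$ kills $e_2,w_2,w_{30},\dots,w_{03}$ and leaves exactly these seven power sums plus $t$. This supports the claim, although restriction to a hyperplane need not preserve generation in general, which is why the direct argument above is needed. Then $k[\{\eta=0\}]^\Gamma=k[t,p_{ab}]$. Invariants of a finite group separate orbits in characteristic zero, so equal values of the eight restrictions force $F'\in\Gamma F$, and hence $\varphi$ and $\psi$ are conjugate.

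The main obstacle is this generation statement for $S_3$ on two copies of $W$, since the paper only established generation on the full slice, where $\eta$ is a free coordinate. If citing the multisymmetric theorem is deemed insufficient, I would instead verify it the same way as \cref{eq:slice-ring-rank-certificate}. The Molien series of $W\oplus W$ for $S_3$ is
\[
\frac{1}{6}\Bigl((1-s)^{-4}+3(1-s)^{-2}(1-s^2)^{-2}+2(1+s+s^2)^{-2}\Bigr).
\]
I would compare its coefficients with the ranks of products of the seven power sums in degrees up to $6$, and conclude by Noether's bound.
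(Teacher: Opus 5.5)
Your proof is correct, but its central generation step takes a different route from the paper's.

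\textbf{The paper's route.} The paper restricts \cref{thm:cubic-chart-ring-generators} to the $\Gamma$-stable hyperplane $\eta=0$. For a finite group in characteristic zero, averaging makes $k[V_5]^\Gamma\to k[\{\eta=0\}]^\Gamma$ surjective, so the fourteen generators restrict to generators. The six containing the factor $\eta$ restrict to zero, so $t$ and the seven $p_{ab}$ generate. Separation of orbits by finite-group invariants then finishes the argument.

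\textbf{Your route.} You decompose $\{\eta=0\}\simeq k\oplus W\oplus W$, with $x$ and $y$ permuted simultaneously. This checks out: $r$ sends $y_i\mapsto y_{i-1}$ and $s$ sends $y_i\mapsto y_{-i}$, exactly as for $x_i$. You then obtain generation from the classical theorem that polarized power sums of degree at most $3$ generate the multisymmetric functions of three points, restricted from $k^3\oplus k^3$ to $W\oplus W$. What this buys is independence from the computer rank certificate \cref{eq:slice-ring-rank-certificate}, on which \cref{thm:cubic-chart-ring-generators} rests. The paper's route, by contrast, is a short corollary of a theorem already proved.

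\textbf{Your reason for avoiding the paper's route is mistaken.} You say restriction to a hyperplane need not preserve generation. For a finite (indeed any linearly reductive) group in characteristic zero, restriction of invariants to a stable closed subvariety is always surjective. That is exactly the Reynolds-operator fact you invoke yourself for $W\oplus W\subset k^3\oplus k^3$. So generators do restrict to generators, and your ``consistency check'' is already a complete proof, identical to the paper's.

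\textbf{Your fallback Molien series is wrong in the transposition term.} A transposition acts on $W\oplus W$ with eigenvalues $1,1,-1,-1$, so the term should be $3(1-s^2)^{-2}$. The expression $3(1-s)^{-2}(1-s^2)^{-2}$ you wrote is the term for $k^3\oplus k^3$. As written, the series gives $\tfrac16(4+6-4)=1$ in degree $1$, but $W\oplus W$ has no linear invariants. So the rank comparison you propose would fail already in degree $1$.
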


\begin{proof}
The hyperplane $\{\eta=0\}\subset V_5$ is $\Gamma$-stable, since $\eta$ transforms by the sign character.  For a finite group in characteristic zero, restriction of invariants to a stable closed subvariety is surjective, by averaging over the group.  Hence $k[V_5]^\Gamma\to k[\{\eta=0\}]^\Gamma$ is surjective, and by \cref{thm:cubic-chart-ring-generators} the target is generated by the restrictions of the fourteen functions \cref{eq:fourteen-slice-generators}.  The six functions $e_2,w_2,w_{30},w_{21},w_{12},w_{03}$ contain the factor $\eta$ and restrict to zero.  The remaining eight, $t,p_{20},p_{11},p_{02},p_{30},p_{21},p_{12},p_{03}$, therefore generate $k[\{\eta=0\}]^\Gamma$, and invariants of a finite group separate its orbits.  By \cref{eq:field-generators-inside-fourteen} and \cref{eq:extra-slice-lift-identities} these eight are the restrictions of the displayed functions, and \cref{lem:D-chart-isomorphism} identifies $\Gamma$-orbits with conjugacy classes.
\end{proof}

On the locus $\delta=0$ two of the $x_i$ coincide, the system \cref{eq:y-reconstruction} has rank $2$, and the fibres of $(U_1,\ldots,U_6)$ are positive-dimensional.  There we know no subset of the fourteen functions of \cref{cor:cubic-chart-joint-generators} which suffices, and \cref{cor:separation-D-chart} is used with all fourteen.

\Cref{alg:conjugacy} decides conjugacy.  Its input is two pairs of quartic forms with nonzero resultant, and its output is a truth value.  It uses the following subroutines, all of which are exact polynomial evaluations: $\textsc{Pair}(f_0,f_1)$ returns $(F,G)=\Theta_4(f_0,f_1)$ by \cref{eq:associated-forms-intro}, $\textsc{Disc}(G)$ returns $D=(h,h)_2$, $\textsc{Coord}(F,G)$ returns $(U_1,\ldots,U_6)$ by \cref{eq:absolute-U}, $\textsc{Chart}(F,G)$ returns the fourteen values \cref{eq:fourteen-descended-generators}, $\textsc{Div}(F,G)$ returns $(V_1,\ldots,V_5)$ by \cref{eq:V-coordinates}, and $\textsc{Abs}(F,G)$ returns the $5411$ values $J^a\Delta_{F,G}^{-m}$, $(a,m)\in\Hilb(\Sigma)$, of \cref{thm:moduli-coordinate-ring}.  Every quantity compared is an absolute invariant: on the chart $D\neq0$ these are the six coordinates $U_i$ and, when $U_2^3/2-27U_3^2=0$, the fourteen generators of $\cA_D$.  On the divisor $D=0$ they are the five coordinates $V_j$ of \cref{thm:V-coordinates}, and on the remaining locus they are the generators of $k[\cM_4^1]$.

\begin{algorithm}[!ht]
\caption{Conjugacy of two degree-four rational maps}
\label{alg:conjugacy}
\begin{algorithmic}[1]
\Require $f_0,f_1,g_0,g_1\in V_4$ with $\Res(f_0,f_1)\neq0$ and
 $\Res(g_0,g_1)\neq0$
\Ensure \textsc{true} if $[f_0:f_1]$ and $[g_0:g_1]$ are conjugate over
 $\bar k$, \textsc{false} otherwise
\State $(F,G)\gets\textsc{Pair}(f_0,f_1)$;\quad
       $(F',G')\gets\textsc{Pair}(g_0,g_1)$
\State $D\gets\textsc{Disc}(G)$;\quad $D'\gets\textsc{Disc}(G')$
\If{exactly one of $D,D'$ is zero}
  \State \Return \textsc{false}
\EndIf
\If{$D\neq0$}
  \Comment{cubic-discriminant chart}
  \State $(U_1,\ldots,U_6)\gets\textsc{Coord}(F,G)$;\quad
         $(U_1',\ldots,U_6')\gets\textsc{Coord}(F',G')$
  \If{$(U_1,\ldots,U_6)\neq(U_1',\ldots,U_6')$}
    \State \Return \textsc{false}
    \Comment{forward direction of \cref{thm:explicit-absolute-invariants}}
  \EndIf
  \If{$U_2^3/2-27U_3^2\neq0$}
    \State \Return \textsc{true}
    \Comment{\cref{cor:U-separation}}
  \EndIf
  \State \Return $\bigl[\textsc{Chart}(F,G)=\textsc{Chart}(F',G')\bigr]$
  \Comment{\cref{cor:separation-D-chart}}
\EndIf
\State $A\gets J^{(1)}_{1,3}(F,G)$;\quad $A'\gets J^{(1)}_{1,3}(F',G')$
       \Comment{$D=D'=0$: the divisor $Y$}
\If{exactly one of $A,A'$ is zero}
  \State \Return \textsc{false}
\EndIf
\If{$A\neq0$}
  \State $(V_1,\ldots,V_5)\gets\textsc{Div}(F,G)$;\quad
         $(V_1',\ldots,V_5')\gets\textsc{Div}(F',G')$
  \If{$(V_1,\ldots,V_5)\neq(V_1',\ldots,V_5')$}
    \State \Return \textsc{false}
  \EndIf
  \If{$V_5\neq0$}
    \State \Return \textsc{true}
    \Comment{\cref{thm:V-coordinates}}
  \EndIf
\EndIf
\State \Return $\bigl[\textsc{Abs}(F,G)=\textsc{Abs}(F',G')\bigr]$
  \Comment{\cref{cor:separation-moduli}}
\end{algorithmic}
\end{algorithm}


Each return statement is justified by the result named in its comment.  The two returns of \textsc{false} after comparing coordinates use only the forward direction, that conjugate maps have equal absolute invariants.  In the return $\textsc{Chart}(F,G)=\textsc{Chart}(F',G')$ the fourteen values may be replaced by the eight of \cref{prop:eta-zero-separation} when both maps have $\eta=0$.  The last return compares $5411$ rational numbers.  By \cref{cor:separation} it is equivalent to the finite test $J_j'=\lambda^{d_j}J_j$ for all $j$ and some $\lambda$ with $\lambda^{d_{i_0}}=J'_{i_0}/J_{i_0}$, $J_{i_0}\neq0$, which needs at most $d_{i_0}$ trials.  Every step is a finite exact computation over the field generated by the coefficients, and by \cref{cor:rationality-over-Q} the criteria hold over any field of characteristic zero, conjugacy being understood over its algebraic closure.

\section{Conclusions and further directions}
\label{sec-7}

The three layers of the invariant-theoretic program are now explicit for $V_5\oplus V_3$.

On the polynomial layer, \cref{thm:minimal-generators} determines a minimal generating set of fifty invariants, so $\beta(\cR_{5,3})=18$ and the Gordan bound $42$ of \cref{thm:gordan-degree-bound} is far from sharp.  \Cref{thm:hsop} exhibits a homogeneous system of parameters over which $\cR_{5,3}$ is free of rank $604$, and by \cref{thm:no-bihomogeneous-hsop} some parameter of any such system fails to be bihomogeneous.

On the field layer, \cref{thm:explicit-absolute-invariants} and \cref{cor:rationality-over-Q} give six birational coordinates defined over $\Q$, and \cref{cor:U-separation} shows that they separate conjugacy classes where $U_2^3/2-27U_3^2\neq0$.

On the ring layer, \cref{thm:cubic-chart-ring-generators} and \cref{cor:cubic-chart-joint-generators} determine the cubic-discriminant chart, and \cref{thm:moduli-coordinate-ring} gives $5411$ explicit generators of $k[\cM_4^1]$.  By \cref{cor:no-parameter-generation} this ring is never generated by quotients of a system of parameters.

\Cref{tab:degree-comparison} compares the three smallest degrees.

\begin{table}[ht]
\caption{The joint invariant ring of degree-$d$ rational maps.}
\label{tab:degree-comparison}
\centering
\begin{tabular}{ccccc}
\toprule
$d$&pair&$\dim\cR$&$\dim\cM_d^1$&generators\\
\midrule
$2$&$V_3\oplus V_1$&$3$&$2$&$3$\\
$3$&$V_4\oplus V_2$&$5$&$4$&$6$\\
$4$&$V_5\oplus V_3$&$7$&$6$&$50$\\
\bottomrule
\end{tabular}
\end{table}

Three problems remain.  The first is the ideal of relations among the fifty generators.  It is what is needed for a minimal generating set of $k[\cM_4^1]$ and for the secondary invariants of \cref{cor:hironaka}.

The second is the dynamical interpretation of the coordinates $U_1,\ldots,U_6$.  By \cref{prop:multiplier} the multipliers at the fixed points are rational functions of $(F,G)$, so the classical multiplier invariants are elements of $\Frac(\cR_{5,3})_0$.  Their relation to $U_1,\ldots,U_6$, the loci where they fail to separate orbits, and the automorphism stratification of $\cM_4^1$ remain to be determined.
The third is arithmetic.  By \cref{cor:rationality-over-Q} all the invariants are defined over $\Q$, so fields of moduli, twists, and reduction can be treated explicitly. We intend to address some of these in future work.

\appendix

\crefalias{section}{appendix}
\crefalias{subsection}{appendix}

\section{Primitive order patterns for Gordan's theorem}
\label{app:gordan-patterns}

The following table is the complete uncolored primitive list used in \cref{thm:gordan-degree-bound}.  Repetition is written exponentially: for example, $1^3=3$ means three order-one quintic covariants matched with one order-three cubic covariant.  The column $\widehat d$ is the largest possible total coefficient degree after the repeated orders are replaced by named covariants, using \cref{eq:max-degree-by-order}.  Thus it is an upper bound for every colored Gordan transvectant belonging to that pattern.

For completeness, the enumeration has the following short independent check.  For each pair $(u,v)\in\N^2$ with $s=2u+3v\leq51$, generate the partitions $\lambda$ of $s$ with parts in $\{1,2,3,4,5,6,7,9\}$ by the standard recurrence which either omits or adjoins one copy of the largest allowed part.  Let $\Sub(\lambda)$ be the set of sums of submultisets of $\lambda$.  The identity    \( \lambda=2^u3^v\)  is primitive exactly when
\[
 \Sub(\lambda)\cap    \{\,2u'+3v':0\leq u'\leq u,\ 0\leq v'\leq v\,\}  =\{0,s\}.
\]
This criterion produces precisely the rows below.  Together with the length bound used in \cref{thm:gordan-degree-bound}, it also proves that no further primitive identity is omitted.

\begingroup \footnotesize \setlength{\LTpre}{0.5em} \setlength{\LTpost}{0.5em}
\begin{longtable}{c r r @{\qquad} c r r}
\caption{Primitive order identities for $V_5\oplus V_3$.}
\label{tab:gordan-patterns}\\
\toprule
Order identity & Common order & $\widehat d$ & Order identity & Common order & $\widehat d$\\
\midrule
\endfirsthead
\toprule
Order identity & Common order & $\widehat d$ & Order identity & Common order & $\widehat d$\\
\midrule
\endhead
$2=2$ & $2$ & $10$ & $2\,7=3^{3}$ & $9$ & $22$ \\
$1^{2}=2$ & $2$ & $28$ & $2^{2}\,5=3^{3}$ & $9$ & $32$ \\
$3=3$ & $3$ & $12$ & $1\,4^{2}=3^{3}$ & $9$ & $34$ \\
$1\,2=3$ & $3$ & $24$ & $1^{2}\,7=3^{3}$ & $9$ & $40$ \\
$1^{3}=3$ & $3$ & $42$ & $5^{2}=2^{5}$ & $10$ & $24$ \\
$4=2^{2}$ & $4$ & $10$ & $3\,7=2^{5}$ & $10$ & $24$ \\
$1\,3=2^{2}$ & $4$ & $26$ & $1\,9=2^{2}\,3^{2}$ & $10$ & $26$ \\
$5=2\,3$ & $5$ & $12$ & $1\,9=2^{5}$ & $10$ & $26$ \\
$1\,4=2\,3$ & $5$ & $24$ & $4\,7=2\,3^{3}$ & $11$ & $22$ \\
$6=3^{2}$ & $6$ & $10$ & $5\,7=3^{4}$ & $12$ & $24$ \\
$6=2^{3}$ & $6$ & $10$ & $5\,7=2^{6}$ & $12$ & $24$ \\
$3^{2}=2^{3}$ & $6$ & $24$ & $4^{3}=3^{4}$ & $12$ & $30$ \\
$2\,4=3^{2}$ & $6$ & $20$ & $3\,9=2^{6}$ & $12$ & $24$ \\
$2^{3}=3^{2}$ & $6$ & $30$ & $2\,5^{2}=3^{4}$ & $12$ & $34$ \\
$1\,5=3^{2}$ & $6$ & $26$ & $1\,4\,7=3^{4}$ & $12$ & $36$ \\
$1\,5=2^{3}$ & $6$ & $26$ & $7^{2}=2\,3^{4}$ & $14$ & $24$ \\
$1^{2}\,4=3^{2}$ & $6$ & $38$ & $7^{2}=2^{7}$ & $14$ & $24$ \\
$7=2^{2}\,3$ & $7$ & $12$ & $5\,9=2^{7}$ & $14$ & $24$ \\
$1\,6=2^{2}\,3$ & $7$ & $24$ & $5^{3}=3^{5}$ & $15$ & $36$ \\
$4^{2}=2\,3^{2}$ & $8$ & $20$ & $4^{2}\,7=3^{5}$ & $15$ & $32$ \\
$3\,5=2^{4}$ & $8$ & $24$ & $1\,7^{2}=3^{5}$ & $15$ & $38$ \\
$1\,7=2\,3^{2}$ & $8$ & $26$ & $7\,9=2^{8}$ & $16$ & $24$ \\
$1\,7=2^{4}$ & $8$ & $26$ & $9^{2}=2^{9}$ & $18$ & $24$ \\
$9=3^{3}$ & $9$ & $12$ & $4\,7^{2}=3^{6}$ & $18$ & $34$ \\
$9=2^{3}\,3$ & $9$ & $12$ & $7^{3}=3^{7}$ & $21$ & $36$ \\
$4\,5=3^{3}$ & $9$ & $22$ & & & \\
\bottomrule
\end{longtable}
\endgroup

\subsection{The fifty generators}\label{app:generators}

The table below uses the covariants of \cref{eq:quintic-initial}, \cref{eq:quintic-covariants}, and \cref{eq:cubic-covariants}.  

\begin{small}
\begin{longtable}{c l @{\qquad} c l @{\qquad} c l}
\toprule
Invariant & transvectant & Invariant & transvectant & Invariant & transvectant \\
\midrule
\endhead
$J_{0,4}^{(1)}$ & $D$ & $J_{4,4}^{(1)}$ & $\bigl(c_{4,4},h^{2}\bigr)_{4}$ & $J_{6,4}^{(3)}$ & $\bigl(F i c_{3,5},G^{4}\bigr)_{12}$ \\
$J_{1,3}^{(1)}$ & $\bigl(F,G h\bigr)_{5}$ & $J_{4,4}^{(2)}$ & $\bigl(c_{4,6},G q\bigr)_{6}$ & $J_{7,3}^{(1)}$ & $\bigl(i c_{5,1},q\bigr)_{3}$ \\
$J_{2,2}^{(1)}$ & $\bigl(i,h\bigr)_{2}$ & $J_{4,4}^{(3)}$ & $\bigl(F^{2} i,G^{4}\bigr)_{12}$ & $J_{7,3}^{(2)}$ & $\bigl(c_{7,5},G h\bigr)_{5}$ \\
$J_{2,2}^{(2)}$ & $\bigl(H,G^{2}\bigr)_{6}$ & $J_{5,3}^{(1)}$ & $\bigl(c_{5,3},q\bigr)_{3}$ & $J_{7,3}^{(3)}$ & $\bigl(c_{3,5} c_{4,4},G^{3}\bigr)_{9}$ \\
$J_{3,1}^{(1)}$ & $\bigl(c_{3,3},G\bigr)_{3}$ & $J_{5,3}^{(2)}$ & $\bigl(F c_{4,4},G^{3}\bigr)_{9}$ & $J_{8,2}^{(1)}$ & $\bigl(c_{8,2},h\bigr)_{2}$ \\
$J_{4,0}^{(1)}$ & $c_{4,0}$ & $J_{5,3}^{(3)}$ & $\bigl(F i^{2},G^{3}\bigr)_{9}$ & $J_{8,2}^{(2)}$ & $\bigl(c_{3,5} c_{5,1},G^{2}\bigr)_{6}$ \\
$J_{1,5}^{(1)}$ & $\bigl(F,h q\bigr)_{5}$ & $J_{6,2}^{(1)}$ & $\bigl(c_{6,2},h\bigr)_{2}$ & $J_{9,1}^{(1)}$ & $\bigl(c_{9,3},G\bigr)_{3}$ \\
$J_{2,4}^{(1)}$ & $\bigl(H,G q\bigr)_{6}$ & $J_{6,2}^{(2)}$ & $\bigl(i c_{4,4},G^{2}\bigr)_{6}$ & $J_{9,1}^{(2)}$ & $\bigl(i c_{7,1},G\bigr)_{3}$ \\
$J_{3,3}^{(1)}$ & $\bigl(c_{3,3},q\bigr)_{3}$ & $J_{6,2}^{(3)}$ & $\bigl(i^{3},G^{2}\bigr)_{6}$ & $J_{9,3}^{(1)}$ & $\bigl(i c_{7,1},q\bigr)_{3}$ \\
$J_{3,3}^{(2)}$ & $\bigl(c_{3,5},G h\bigr)_{5}$ & $J_{7,1}^{(1)}$ & $\bigl(i c_{5,1},G\bigr)_{3}$ & $J_{10,2}^{(1)}$ & $\bigl(c_{5,1}^{2},h\bigr)_{2}$ \\
$J_{3,3}^{(3)}$ & $\bigl(t,G^{3}\bigr)_{9}$ & $J_{8,0}^{(1)}$ & $c_{8,0}$ & $J_{11,1}^{(1)}$ & $\bigl(c_{5,1} c_{6,2},G\bigr)_{3}$ \\
$J_{4,2}^{(1)}$ & $\bigl(c_{4,6},G^{2}\bigr)_{6}$ & $J_{3,7}^{(1)}$ & $\bigl(F^{3},G^{4} q\bigr)_{15}$ & $J_{12,0}^{(1)}$ & $c_{12,0}$ \\
$J_{5,1}^{(1)}$ & $\bigl(c_{5,3},G\bigr)_{3}$ & $J_{4,6}^{(1)}$ & $\bigl(F^{2} i,G^{3} q\bigr)_{12}$ & $J_{12,2}^{(1)}$ & $\bigl(c_{5,1} c_{7,1},h\bigr)_{2}$ \\
$J_{2,6}^{(1)}$ & $\bigl(H,q^{2}\bigr)_{6}$ & $J_{5,5}^{(1)}$ & $\bigl(F c_{4,4},G^{2} q\bigr)_{9}$ & $J_{13,1}^{(1)}$ & $\bigl(c_{6,2} c_{7,1},G\bigr)_{3}$ \\
$J_{3,5}^{(1)}$ & $\bigl(c_{3,5},h q\bigr)_{5}$ & $J_{5,5}^{(2)}$ & $\bigl(F^{2} c_{3,5},G^{5}\bigr)_{15}$ & $J_{15,1}^{(1)}$ & $\bigl(c_{7,1} c_{8,2},G\bigr)_{3}$ \\
$J_{3,5}^{(2)}$ & $\bigl(t,G^{2} q\bigr)_{9}$ & $J_{6,4}^{(1)}$ & $\bigl(c_{6,4},h^{2}\bigr)_{4}$ & $J_{18,0}^{(1)}$ & $c_{18,0}$ \\
$J_{3,5}^{(3)}$ & $\bigl(F^{3},G^{5}\bigr)_{15}$ & $J_{6,4}^{(2)}$ & $\bigl(i c_{4,4},G q\bigr)_{6}$ &  &  \\
\bottomrule
\end{longtable}
\end{small}

The first subscript pair on $J_{a,b}^{(r)}$ is its bidegree in the coefficients of $(F,G)$; the superscript distinguishes listed invariants of the same bidegree.

\subsection{The rational-map resultant in the generators}
\label{app:resultant}

The invariant $\Delta_{F,G}$ of \cref{def:modular-resultant} lies in $\cR_8$ and is the following polynomial in the generators:
\begin{footnotesize}
\begin{equation}\label{eq:resultant-in-generators}
\begin{split}
\Delta_{F,G}
&=125\Bigl(
 \frac{1830}{49}J_{0,4}^{(1)}J_{3,1}^{(1)}
 -\frac{50625}{1232}J_{0,4}^{(1)}J_{4,0}^{(1)}
 -\frac{180}{7}J_{1,3}^{(1)}J_{2,2}^{(1)}
 +\frac{225}{14}J_{1,3}^{(1)}J_{2,2}^{(2)}
 -\frac{4875}{22}J_{1,3}^{(1)}J_{3,1}^{(1)}
 +\frac{6075}{88}\bigl(J_{2,2}^{(1)}\bigr)^2 \\
&\qquad
 -\frac{10125}{44}J_{2,2}^{(1)}J_{2,2}^{(2)}
 -\frac{625}{2}J_{2,2}^{(1)}J_{3,1}^{(1)}
 +\frac{625}{4}\bigl(J_{2,2}^{(2)}\bigr)^2
 -\frac{6250}{3}J_{2,2}^{(2)}J_{3,1}^{(1)}
 +\frac{625}{2}J_{2,2}^{(2)}J_{4,0}^{(1)}
 +7500\bigl(J_{3,1}^{(1)}\bigr)^2 \\
&\qquad
 +\frac{3125}{4}\bigl(J_{4,0}^{(1)}\bigr)^2
 -\frac{8775}{154}J_{3,5}^{(1)}
 -\frac{2700}{91}J_{3,5}^{(2)}
 -J_{3,5}^{(3)}
 -\frac{1125}{4}J_{4,4}^{(1)}
 +\frac{5625}{11}J_{4,4}^{(2)}
 -\frac{75}{2}J_{4,4}^{(3)} \\
&\qquad
 +\frac{9375}{7}J_{5,3}^{(1)}-250J_{5,3}^{(2)}-375J_{5,3}^{(3)}
 -6750J_{6,2}^{(1)}-3750J_{6,2}^{(2)}-625J_{6,2}^{(3)}-50000J_{8,0}^{(1)}
 \Bigr).
\end{split}
\end{equation}
\end{footnotesize}
The identity was proved by exact evaluation at the $130$ rational points of the proof of \cref{thm:minimal-generators}: both sides lie in $\cR_8$, evaluation is injective on $\cR_8$ because the evaluation matrix of the $36$ monomials spanning $\cR_8$ has rank $36$ over $\Q$, and both sides agree at all $130$ points.

\subsection{Descent data for the cubic-discriminant chart}
\label{app:slice-ring-lifts}

We use the abbreviations in \cref{eq:absolute-shorthand}.  For   \( g=e_2\), \(p_{02}\), \(p_{12}\), \(p_{03}\), \(w_2\), \(w_{21}\), \(w_{12}\), \(w_{03}\)
define $Q_g$ by the following formulas:
\begin{footnotesize}
\begin{equation}\label{eq:Q}
\begin{split}
Q_{e_2}
&=264600L^2, \\[0.25em]
Q_{p_{02}}
&=\frac1{30}\Bigl(4083210L^2+D\bigl(2453760N+581405A^2+D(2687080B_2-835416B_1)\bigr)\Bigr), \\[0.25em]
Q_{p_{12}}
&=\frac{5}{756756}\Bigl(133383920670AL^2
 +D\bigl(4587439736880LM+1869573265560AN+407613761555A^3 \\
&\qquad
 +D(12444992560T_3-1305564986880T_2-1725889117680T_1+2239075899060AB_2 \\
&\qquad
 -1153318029864AB_1-289845437024DR)\bigr)\Bigr), \\[0.25em]
Q_{p_{03}}
&=\frac1{252252}\Bigl(1459163675970AL^2
 -D\bigl(7553317852080LM+2899790613720AN+648701088035A^3 \\
&\qquad
 +D(370860778288T_3-3781089607680T_2-1987532754480T_1+2085082539540AB_2 \\
&\qquad
 -1274048816040AB_1-459131079264DR)\bigr)\Bigr), \\[0.25em]
Q_{w_2}
&=49392000L(DM-AL), \\[0.25em]
Q_{w_{21}}
&=\frac{625}{9009}L\Bigl(
 438837128730L^3+982935233280DLN+215194604625DA^2L-83523440D^2W \\
&\qquad
 +526319806320D^2LB_2-178109310456D^2LB_1-22314544560D^2AM \\
&\qquad
 -8330894880D^3S_3+75289456620D^3S_2+99761875488D^3S_1
 \Bigr), \\[0.25em]
Q_{w_{12}}
&=\frac{25}{3003}L\Bigl(
 -2832494194530L^3-4448817172800DLN-1081547792325DA^2L+12444992560D^2W \\
&\qquad
 -3200034111120D^2LB_2+914989205592D^2LB_1-8471509200D^2AM \\
&\qquad
 +651961944480D^3S_3-291086990460D^3S_2-664047594144D^3S_1
 \Bigr), \\[0.25em]
Q_{w_{03}}
&=\frac1{1001}L\Bigl(
 18282462528330L^3+16479966222720DLN+3176968579785DA^2L \\
&\qquad
 -1854303891440D^2W+18276219284400D^2LB_2-5198207741496D^2LB_1 \\
&\qquad
 +4876420626000D^2AM-9330462224160D^3S_3+1385837078220D^3S_2+3660996971808D^3S_1
 \Bigr).
\end{split}
\end{equation}
\end{footnotesize}
Their bidegrees and denominator powers are
\begin{equation}\label{eq:extra-lift-degree-table}
\begin{array}{c|cccccccc}
g&e_2&p_{02}&p_{12}&p_{03}&w_2&w_{21}&w_{12}&w_{03}\\ \hline
m_g&3&3&4&4&4&6&6&6\\
\bideg(Q_g)
 &(2,10)&(2,10)&(3,13)&(3,13)&(3,13)
 &(4,20)&(4,20)&(4,20).
\end{array}
\end{equation}
With $D(G_0)=2/27$, direct expansion gives the identities \cref{eq:extra-slice-lift-identities}.  An exact finite verification can be made without random sampling: evaluate both sides on the barycentric simplex lattice of degree $d=2,3$, or $4$ in the six coefficients of $F$.  This set is unisolvent for homogeneous polynomials of degree $d$, and rational Gaussian elimination proves equality.  Thus the displayed identities are polynomial identities in the coefficients of $F$.

\subsection{Spanning certificates}\label{app:certificates}

The generation step of \cref{thm:minimal-generators} rests on the nonvanishing of finitely many determinants.  These determinants are documented by two certificate files, distributed with this paper as ancillary files:
\[
\texttt{degree-30-certificate.json},\qquad \texttt{gordan-six-degrees-certificate.json}.
\]
The first file certifies the $242$ bidegrees with even total degree at most $30$ and $\dim_k\cR_{a,b}>0$, with $p=65521$; the certified dimensions sum to $31638$.  The second file certifies the $51$ bidegrees of the $81$ substituted Gordan patterns of total degrees $32$ to $42$, with $p=251$; the certified dimensions sum to $34146$.

Each file is a JSON document.  The field \texttt{prime} records $p$.  The field \texttt{evaluation\_points} records the coordinates of the evaluation points, namely the six coefficients of $F$ and the four coefficients of $G$ modulo $p$ for each point; the first file records $815$ points, the second records $2184$ points, and the certificate of a bidegree of dimension $n$ uses the first $n$ points of the list.  The field \texttt{point\_seed} records the seed $20260815$ of the deterministic generator that produced the points.  The field \texttt{source\_sha256} records the SHA-256 hashes of the scripts that built the file.  The field \texttt{records} contains one record per certified bidegree.  A record contains the bidegree $(a,b)$, the dimension $n=\dim_k\cR_{a,b}$, a list of $n$ monomials in the fifty generators, each encoded as the hexadecimal string of its $50$ exponent bytes, and the determinant of the $n\times n$ evaluation matrix modulo $p$.  The ordering of the fifty generators underlying the exponent vectors is fixed in \texttt{gordan\_rank.py} and is the same for building and for verification.

The verification procedure is the following.  The commands
\begin{verbatim}
python3 low_degree_certificate.py \
        --verify degree-30-certificate.json
python3 six_degree_certificate.py \
        --verify gordan-six-degrees-certificate.json
\end{verbatim}
recompute the values of the fifty generators at the recorded points by iterated application of \cref{eq:transvectant} modulo $p$, recompute every dimension from \cref{eq:bigraded-dimension}, form the recorded square matrix of every record, and recompute every determinant by blocked Gaussian elimination modulo $p$.  Verification fails unless every recomputed dimension and every recomputed determinant agrees with the record, the set of certified bidegrees equals the target family, and, for the second file, every substitution count agrees with an independent recount.  A successful verification establishes the nonvanishing of every recorded determinant, and this nonvanishing is the only computational input to the generation step.

The supporting modules are \texttt{gordan\_audit.py}, which enumerates the primitive patterns of \cref{tab:gordan-patterns}, counts their substitutions by bidegree, and computes the dimensions \cref{eq:bigraded-dimension}, and \texttt{gordan\_rank.py}, which implements modular transvection, the enumeration of the monomials of a bidegree, and the elimination routines.  The enumeration in \texttt{gordan\_audit.py} confirms independently that no substituted pattern has total degree exceeding $42$, in agreement with \cref{thm:gordan-degree-bound}.

\subsection{Implementation}\label{app:implementation}

The file \texttt{deg4\_conjugacy.py}, distributed with this paper as an ancillary file, implements the objects of the paper in exact arithmetic with SymPy.  It contains the transvectant \cref{eq:transvectant}, the Clebsch--Gordan pair and its inverse \cref{eq:quartic-reconstruction}, the covariants of \cref{sec:covariants}, the fifty generators of \cref{app:generators}, the rational-map resultant $\Delta_{F,G}$, the numerators $P_1,\ldots,P_6$ and the coordinates $U_1,\ldots,U_6$ of \cref{thm:explicit-absolute-invariants}, the eight functions $Q_g/D^{m_g}$ of \cref{app:slice-ring-lifts}, the coordinates $V_1,\ldots,V_5$ of \cref{thm:V-coordinates}, the $5411$ generators of \cref{thm:moduli-coordinate-ring}, and \cref{alg:conjugacy} as the function \texttt{conjugate(f0,\,f1,\,g0,\,g1)}, which returns a truth value together with the case of the algorithm that decided it.

The certificate files and the scripts of \cref{app:certificates} are distributed in the same package.

The script was used to check the following statements independently of the computations in the proof of \cref{thm:minimal-generators}: the identity of \cref{app:resultant} at random rational points; the six identities \cref{eq:slice-restriction-identities} and the eight identities \cref{eq:extra-slice-lift-identities} as polynomial identities in the six coefficients of $F$; the relation $u_6=w_{30}$ of \cref{eq:field-generators-inside-fourteen}; the identities \cref{eq:V-restrictions} on the double-root slice.

\bibliographystyle{elsarticle-num}
\bibliography{references}

\end{document}